\def\rg{\rangle}
\def\lg{\langle} 
\def \to{\rightarrow}
\def \states{\mathbb{T}^d}
\def \T{\mathbb{T}}
\def \R {\mathbb{R}}
\def \N {\mathbb{N}}
\def \mes{\mathcal{P}}
\def\Pk{\mes(\states)}
\def \mint{\int_{\states}}
\def \d{\mathrm{d}}
\def \abs{|}
\def \leftnorm{\left|}
\def \rightnorm{\right|}
\newcommand{\be}{\begin{equation}}
\newcommand{\ee}{\end{equation}}
\theoremstyle{definition}
\theoremstyle{plain}
\newtheorem{theorem}{Theorem}[section]
\newtheorem{proposition}[theorem]{Proposition}
\newtheorem{definition}[theorem]{Definition}
\newtheorem{lemma}[theorem]{Lemma}
\theoremstyle{remark}
\newtheorem{remark}[theorem]{Remark}
\numberwithin{equation}{section}
\begin{document}
\title{Learning optimal policies in potential Mean Field Games: Smoothed Policy Iteration algorithms}
\date{}
\author{Qing Tang\thanks{China University of Geosciences (Wuhan), China. tangqingthomas@gmail.com}, \,\,
Jiahao Song\thanks{China University of Geosciences (Wuhan), China. songjh@cug.edu.cn}}
\maketitle

\begin{abstract}
We introduce two Smoothed Policy Iteration algorithms (\textbf{SPI}s) as rules for learning policies and methods for computing Nash equilibria in second order potential Mean Field Games (MFGs). Global convergence is proved if the coupling term in the MFG system satisfy the Lasry Lions monotonicity condition. Local convergence to a stable solution is proved for system which may have multiple solutions. The convergence analysis shows close connections between \textbf{SPI}s and the Fictitious Play algorithm, which has been widely studied in the MFG literature. Numerical simulation results based on finite difference schemes are presented to supplement the theoretical analysis.
\end{abstract}
{\bf AMS Subject Classification:} 49N70, 91A13, 35Q80, 65M06\\
{\bf Keywords:} {~mean field games, policy iteration, learning in games, numerical methods}

\section{Introduction}

Mean Field Games (MFG for short) theory has been introduced in \cite{hcm,ll} to characterize Nash equilibria for differential games involving a large (infinite) number of symmetric and strategic agents. For a comprehensive introduction to the applications of MFG theory we refer to the monographs by Carmona and Delarue \cite{carmona2018}, Bensoussan, Frehse and Yam~\cite{ben} and the lecture notes~\cite{achdouCetraro,lauriere2021}. 
In this paper, we focus on considering the evolutive second order potential MFG model:
\be\label{MFG}
\left\{\begin{split}
(i) \qquad &-\partial_t u(t,x) - \sigma \Delta u(t,x)  +H(x,\nabla u(t,x)) =f[m(t)](x), &{\rm in}\,\,Q,\\
(ii) \qquad &\partial_t m(t,x)- \sigma \Delta m(t,x)  - {\rm{div}} ( m(t,x)H_p (x,\nabla u(t,x))) =0, &{\rm in}\,\,Q, \\
&m(0,x)=m_0(x) , \; u(T, x)=g[m (T)](x)&{\rm in}\,\,\T^d.
\end{split}\right.
\ee
 The $(i)$ in \eqref{MFG} is the Hamilton-Jacobi-Bellman (HJB for short) equation characterizing the value function $u(t,x)$  for a representative agent at time $t$ and state $x$, solving the stochastic optimal control problem (with $d-$dimensional Brownian motion and $\sigma>0$):  $dX_\tau= -q_\tau\, d\tau + \sqrt{2\sigma}\, dB_\tau,\,\,X_t= x$, 
$$
 u(t,x)= \inf_{q_\tau \in \R^d} {\mathbb E} \left\{ \int_t^T \big(L(X_\tau, q_\tau)+ f[m(\tau)](X_\tau)\big) dt+ g[m(T)](X_T) \right\},
$$
with control process $q_\tau$. $L$, $f$, $g$ stand for costs. Define the Hamiltonian
\begin{equation}\label{defH}
H(x,p)=\sup_{q\in \R^d}p\cdot q-L(x,q),
\end{equation}
 $q^*_t=H_p(X_t,\nabla u(t,X_t))$ is the optimal policy (control in feedback form) at time $t$. $(ii)$ is the Fokker-Planck-Kolmogorov (FPK for short) equation, governing the density of the population $m(t,x)$, with each agent following the policy $q^*$. $(t,x)\in Q$ where $Q:=[0,T]\times \T^d$ and $\T^d$ stands for the flat torus $\R^d / \mathbb{Z}^d$. \par
\textbf{Literature.} The forward backward structure of system \eqref{MFG} precludes a simple time marching algorithm for solving its discretized form. Numerical methods for MFGs with finite difference schemes goes back to the pioneering works of Achdou and I. Capuzzo-Dolcetta \cite{ad,acd}. In \cite{ad,acd}, the discretization is based on on a monotone upwind scheme for the HJB equation, while the FPK is obtained by differentiating the discrete HJB equation and taking the adjoint. This nonlinear system can be solved by Newton algorithm, which has local and quadratic convergence. In \cite{ccg}, Cacace, Camilli and Goffi proposed solving the nonlinear system by policy iteration method. In optimal control problems, policy iteration (also known as Howard algorithm \cite{howard1960}) stands for an iterative method with index $n$, of alternating between evaluating the value $u^{(n)}$ of a policy $q^{(n)}$ and generating a new policy $q^{(n+1)}$, e.g. \cite[pp. 521-523]{bl}. In \cite{ccg}, an update of distribution $m^{(n)}$ is generated by the policy $q^{(n)}$, before $u^{(n)}$ is evaluated. With some quantitative assumptions, one can show that the policy iteration operator in \cite{ccg,ct} and its generalizations \cite{lst} are contractive and thereby obtain a linear rate of convergence.  \par
 
 Fictitious Play algorithm (\textbf{FP} for short) is a widely used model of learning in games for explaining how and why equilibria arises as well as computing Nash equilibria, e.g. \cite{FL98,MondrerShapley2}. \textbf{FP} for  potential MFGs was first introduced in \cite{ch} by Cardaliaguet and Hadikhanloo. Numerical implementation of \textbf{FP} for MFGs were considered in \cite[pp. 8-9]{lauriere2021}. The theory of \textbf{FP} has been further developed in \cite{bc,dumitrescu2022linear,Hadik,hs} for the MFG PDE systems and in \cite{elie2019approximate,l2022} for MFG reinforcement learning problems. Local convergence of \textbf{FP} to a stable solution of a nonconvex potential MFG was considered in \cite{bc} and the selection of equilibria with \textbf{FP} and common noise was considered by Delarue and Vasileiadis in \cite{delarue2021}. In \cite{blp}, Lavigne and Peiffer showed the connections between \textbf{FP} and the conditional gradient method (Frank-Wolf algorithm) for solving MFGs, with an exhaustive discussion on learning rates and convergence rates. Numerical methods were considered in \cite{blp} using explicit finite difference schemes.  \par
\textbf{Contributions.} Our main contributions are introducing Smoothed Policy Iteration algorithms (henceforth referred to as \textbf{SPI}s) for solving system \eqref{MFG}. Our main assumptions on the Hamiltonian and the nonlocal couplings follow closely \cite{bc,ch}.  Some possible generalizations to MFG with local couplings are discussed, based on ideas from \cite{cirant2021maximal}. The main advantage of \textbf{SPI}s vis-\`a-vis \textbf{FP} is that they solve the MFG model in a single iteration loop, since the HJB equation at each stage is already linearized. A minor difference from \cite{bc,ch} is that we prove all results with a learning rate corresponding to an average with increasingly large weights on more recent updates, rather than using simple average. For numerical methods, we use implicit finite difference schemes which are reminiscent of  those from \cite{ad,acd}. Following \cite{ccg}, we use the Engquist-Osher flux for the $\rm{div}(\cdot)$ term in the FPK equation in order to retain the adjoint structure to the discretized (and linearized) HJB equation. The key idea and difference from policy iteration in \cite{ccg,ct} is using some forms of weighted average policies $\bar q^{(n)}$ or $\hat q^{(n)}$, in order to penalize deviation from the previous step and thereby stabilize the learning procedures. This methodology is also reflected in algorithms used for MFG reinforcement learning, e.g. TRPO \cite{guo2023} and online mirror descent \cite{Hadik,perolatOMD}. For the main results we consider coupling terms which are globally Lipschitz, whereas the couplings are (local) uniformly bounded in \cite{ccg}. Numerical results show that convergence of \textbf{SPI}s are sublinear. Importantly, convergences of \textbf{SPI}s are global, i.e. from any initial guesses. This fact is clear from theoretical analysis and supplemented by numerical examples. \par
The paper is organized as follows: In Section \ref{Preliminaries and Assumptions} we give the main notation, functional space setting, assumptions on the data and some preliminary results. We propose two different \textbf{SPI}s in Subsection \ref{Algo1} and \ref{Algo2}. The weighted average policies $\bar q^{(n)}$ or $\hat q^{(n)}$ are shown to converge uniformly to $H_p(x,\nabla u)$ if the potential MFG \eqref{MFG} has a unique classical solution. In Subsection \ref{Stable solution} we show the local convergence of  $(u^{(n)},m^{(n)})$, constructed by a \textbf{SPI}, to a stable solution $(u,m)$ if the system \eqref{MFG} is nonconvex. In Section \ref{Numerical}, we present some finite difference schemes for implementing \textbf{SPI}s with numerical examples.
\section{Preliminaries and Assumptions}\label{Preliminaries and Assumptions}
\subsection{Functional spaces and data}
We first introduce some functional spaces. Given a Banach space $X$, $L^r(0,T;X)$ denotes the usual vector-valued Lebesgue space. For any $r\geq1$, we denote by $W^{1,2}_r(Q)$ the space of functions $u$ such that $\partial_t^{\delta}D^{\zeta}_x u\in L^r(Q)$ for all multi-indices $\zeta$ and $\delta$, $\vert \zeta \vert+2\delta\leq  2$, endowed with the norm
$$
	\|u\|_{W^{1,2}_r(Q)}=\Big(\int_{Q}\sum_{\vert \zeta \vert+2\delta\leq2}\vert \partial_t^{\delta}D^{\zeta}_x u\vert ^rdxdt\Big)^{\frac1r},
$$
with its trace space given by the fractional Sobolev class $W^{2-\frac{2}{r}}_r(\T^d)$. $W^{0,1}_r(Q)$ is endowed with the norm
$
\|u\|_{W^{0,1}_r(Q)}:=\|u\|_{L^r(Q)}+\sum_{\vert \zeta\vert=1}\|D^\zeta_xu\|_{L^r(Q)}.
$
Let $1/r+1/r'=1$, $\mathcal H^1_r(Q)$ denotes the space of functions $u\in W^{0,1}_r(Q)$ and $\partial_tu\in (W^{0,1}_{r'}(Q))'$, endowed with the norm
$
\|u\|_{\mathcal H^1_r(Q)}:=\|u\|_{W^{0,1}_r(Q)}+\|\partial_tu\|_{(W^{0,1}_{r'}(Q))'}.
$
Let $\mathcal C^0(Q)$ denotes the space of continuous functions in $Q$ and $\mathcal C^{0,1}(Q)$ the space of functions in $\mathcal C^0(Q)$ and once continuously differentiable w.r.t. $x$. For $\alpha \in (0,1)$, $\mathcal C^{\alpha/2,\alpha}(Q)$, $\mathcal C^{(1+\alpha)/2,1+\alpha}(Q)$, $\mathcal C^{1+\alpha/2,2+\alpha}(Q)$ and $\mathcal C^{2+\alpha}(\T^d)$ denote the spaces of H\"older continuous functions in $Q$ and on $\T^d$, with norms defined as in e.g. \cite[p. 5]{achdou2015}. $\mes(\T^d)$ is the set of Borel probability measures on $\T^d$, with finite first moments and endowed with the Wasserstein distance: for $m,m' \in \mes(\T^d)$, ${\bf{d}}_1(m,m')=\sup_\phi \int_{\T^d}d(m-m')(x)$ where the supremum is taken over all $1$-Lipschitz maps $\phi: \T^d\rightarrow \R$.
Given a map $\mathcal{U}:\mes(\states)\to \R$, $\frac{\delta \mathcal{U}}{\delta m}:\states\times \mes(\states)\to \R$ denotes the flat derivative of $\mathcal{U}$ if:
$$
\mathcal{U}[m']-\mathcal{U}[m]= \int_0^1\mint \frac{\delta \mathcal{U}}{\delta m}[(1-s)m+sm'](x)\d (m'-m)(x) \d s, 
$$  
with the normalization $\int_{\T^d} \frac{\delta \mathcal{U}[m]}{\delta m}(x)\d m(x)=0$. Higher order derivatives are defined similarly.\par
Throughout the paper, we assume $f,g : \states \times \mes(\states) \to \R$ derive from potentials, i.e., there exists $F,G : \mes(\states) \to \R$,
\begin{equation}\label{Potential}
 \frac{\delta F}{\delta m}(x)= f[m](x), \,\, \frac{\delta G}{\delta m}(x)= g[m](x).
\end{equation}

We now state some key assumptions on the data.
 \begin{itemize}
          \item[($A1$)] The initial condition $m_0\in \Pk \cap \mathcal C^{2+\alpha}(\T^d)$ and $m_0(x)\geq \vartheta >0$, $\forall x\in \T^d$. 
	\item[($A2$)] For all $x\in \T^d$, $p\in \R^d$ and some $\bar C>0$: 
\begin{equation}\label{hyp:unifCv}
\begin{gathered}
H\in \mathcal C^2(\T^d\times \R^d;\R)\,\,\text{and}\,\,\frac{1}{\bar C} I_d\leq H_{pp}(x,p) \leq \bar C I_d,\\
|H_{px}(x,p)|\leq \bar C(|p| + 1), \,\,\,|H_{xx}(x,p)| \leq \bar C (|p|^2 + 1).
\end{gathered}
\end{equation}	
          \item[($A3$)]
$f$, $g$ $:\T^d\times \mes (\T^d)\rightarrow \R$ and their space derivatives $\partial_{x_i}f$, $\partial_{x_i}g$, $\partial_{x_ix_j}g$ are all globally Lipschitz continuous. The measure derivatives $\frac{\delta f}{\delta m}$ and $\frac{\delta g}{\delta m}$$:\T^d\times \mes (\T^d)\times \T^d\rightarrow \R$ are also Lipschitz continuous.
\item[($A4$)]   For any $m,m'\in \mes(\states)$, 
\begin{equation}\label{mono}
\begin{split}
\int_{\T^d} \left(f[m](x)-f[m'](x)\right)d(m-m')(x)\geq 0, \\
\int_{\T^d} \left(g[m](x)-g[m'](x)\right)d(m-m')(x)\geq 0.
\end{split}
\end{equation}
\end{itemize}
We can also replace $f[m(t)](x)$ in \eqref{MFG} by the local coupling $\tilde{f}(m(t,x))$ in the sense that $\tilde{f}(\cdot): \R^+\rightarrow\R$ with the following assumption:
\begin{itemize}
\item[($A5$)] $u(T,x)=g_T(x)$. There exist $\varkappa>1$ and $C_{\tilde{f}}>0$ such that either
\begin{equation}\label{f+}\tag{f+}
C_{\tilde{f}}^{-1}m^{\varkappa-1}\leq \frac{\partial \tilde{f}}{\partial m}\leq C_{\tilde{f}}(m^{\varkappa-1}+1),
\end{equation}
\begin{equation}\label{f-}\tag{f-}
\text{or}\,\,-C_{\tilde{f}}(m^{\varkappa}+1)\leq \tilde{f}(m) \leq C_{\tilde{f}}(m^{\varkappa}+1),
\end{equation}
with $\varkappa<\frac{d}{d-2}$ if $d>2$ and no further restrictions on $\varkappa$ for $d=1,2$.
 \end{itemize}
($A4$) can be interpreted as crowd aversion, i.e. incentivizing agents to disperse. It is clear under assumption $(A5)$ with \eqref{f+}, we obtain a local coupling version of ($A4$):
$\int_{\T^d} (\tilde{f}(m)-\tilde{f}(m'))d(m-m')(x)\geq 0$. The potential structure in the local coupling case can be formulated as
$
\tilde F(m)=\int_0^m\tilde f(\tau)d\tau$, if $\tilde f(0)=0$.
\begin{remark}\label{Existence}
We recall from \cite{ll}, under assumptions ($A1$), ($A2$) and ($A3$), the system \eqref{MFG} has at least one classical solution $(u,m)\in \mathcal C^{1+\alpha/2,2+\alpha}(Q)\times \mathcal C^{1+\alpha/2,2+\alpha}(Q)$. The solution is unique if, in addition, ($A4$) holds. As special cases of Theorem 1.4 and Theorem 1.5 from \cite{cirant2021maximal}, under assumptions ($A1$), ($A2$) and ($A5$) with \eqref{f+}, there exists a unique classical solution to \eqref{MFG}. If ($A5$) is with \eqref{f-} then there exists at least one classical solution to \eqref{MFG}.
\end{remark}
\begin{remark}\label{F f Lip} From assumption ($A3$), there exists a constant $C>0$,
\begin{align}
\sup_{x\in \T^d}\big|f[m'](x)-f[m](x)\big|+\sup_{x\in \T^d}\big|\partial_xf[m'](x)-\partial_xf[m](x)\big|&\leq C {\bf{d}}_1(m,m'),\label{f Lip}\\
\big|F[m' ] - F[m] - \int_{\T^d}  f[m](x) \d (m'-m)(x) \big| &\leq C {\bf{d}}^2_1(m,m') ,\label{F Lip}\\
\big|G[m'] - G[m]-\int_{\T^d}  g[m](x) \d (m'-m)(x) \big| &\leq C {\bf{d}}^2_1(m,m'),\label{G Lip}\\
\sup_{x\in\T^d}\big|f[m' ](x) - f[m](x) - \frac{\delta f}{\delta m}[m](x)(m'-m)\big| &\leq C{\bf{d}}^2_1(m,m') ,\label{delta f Lip}\\
\sup_{x\in\T^d}\big|g[m' ](x) - g[m](x) - \frac{\delta g}{\delta m}[m](x)(m'-m)\big| &\leq C{\bf{d}}^2_1(m,m') .\label{delta g Lip}
\end{align}
In particular, ${\bf{d}}_1(m,m')\leq d\|m-m'\|_{\mathcal C^0(\T^d)}$ if $m,m'\in \mes(\T^d)\cap \mathcal C^0(\T^d)$.
\end{remark}
\begin{remark}\label{Bernstein}
The key to obtain existence results for system \eqref{MFG} is to have an \textit{a priori} Bernstein estimate $\|\nabla u\|_{L^\infty(Q;\R^d)}\leq R_0$, with a constant $R_0$ generally not explicitly tractable but only depends on the data of the problem. This would allow us to obtain $\|H_p(x,\nabla u)\|_{L^\infty(Q;\R^d)}\leq R$ for some $R>0$. Under assumptions $(A1)$,$(A2)$ and $(A3)$ the Bernstein estimate is straight forward from \cite[Theorem 1.3]{cirant2020lipschitz}, since $\partial_x(f[m(t)](\cdot))$ has uniformly bounded $\mathcal C^2$ norms in the space variable.\par
In the local coupling case with $(A1)$, $(A2)$ and $(A5)$ the Bernstein estimate is much more involved as the regularity of $\partial_x\tilde{f}(m)$ depends on $m$, but it has been obtained in \cite[Theorem 1.4]{cirant2021maximal} by adjoint methods.
\end{remark}
\subsection{Potential MFG and stable solutions}
We can consider the potential formulation, for $t_0 \in [0,T]$,
\begin{equation}\label{J}
J_{t_0}(m,w):=\int_{t_0}^T\int_{\T^d}L(x,\frac{w}{m})mdxdt+\int_{t_0}^TF[m(t)]dt+G[m(T)],
\end{equation}
such that $m(0,x)=m_0$ and $(m,w)$ satisfy the  equation
$$
\partial_t m- \sigma \Delta m - \text{div} ( mq) =0, \,\,{\rm in}\,\,Q.
$$
Denote $q=\frac{w}{m}$ if $m>0$. Any minimizer $(m,w)$ of $J_0$ corresponds to a solution to the MFG system \eqref{MFG}, in the sense that a pair $(u,m)$ solves \eqref{MFG} and $q=H_p(x,\nabla u)$. It was shown in \cite[Proposition 3.2]{bc} that for any later time $t_0>0$, $(m,w)$ is also the unique minimizer of $J_{t_0}$. For MFG systems which may have nonconvex potentials and therefore multiple solutions, Cardaliaguet and Briani introduced in \cite[Definition 4.1]{bc} the notion of stable solutions as local attractors for the learning procedure of the \textbf{FP} algorithm. 
  \begin{definition}
Let $(u,m)$ be a solution the MFG system \eqref{MFG} with initial condition $(t_0,m_0)\in [0,T]\times \mathcal{P}(\T^d)$. We say the solution is stable if $(v,\mu)=(0,0)$ is the unique solution to the linearized system
\be\label{Linear MFG}
\left\{\begin{split}
(i) &-\partial_t v - \sigma \Delta v  +H_p(x,\nabla u)\cdot \nabla v =\frac{\delta f}{\delta m}[m(t)](x)(\mu)& {\rm in } \,\,[t_0,T]\times \T^d, \\
(ii) &\partial_t \mu- \sigma \Delta \mu  -{\rm{div}}(\mu H_p(x,\nabla u) )-{\rm{div}}( mH_{pp} (x,\nabla u)\nabla v) =0 & {\rm in } \,\,[t_0,T]\times \T^d,  \\
&\mu(t_0,x)=0 , \; v(T, x)=\frac{\delta g}{\delta m}[m(T)](x)(\mu(T))& {\rm in } \,\,\T^d.
\end{split}\right.
\ee
\end{definition}
 Stable solutions are shown in \cite[Proposition 4.2]{bc} to be locally isolated in the sense that, there exists an $\eta$-ball around $(u,m)$, such that no other solutions exist within this ball. The following estimate (\cite[Lemma 5.2]{bc}) is essential to consider convergence to stable solutions:
\begin{lemma}\label{stable sol lemma}
Let $(u,m)$ be a stable solution the MFG system \eqref{MFG}. Then there exists a constant $C>0$ such that, for any $a\in \mathcal C^0(Q)$, $b\in \mathcal C^0(Q;\R^d)$, $c\in \mathcal C^0(\T^d)$ and $(v,\rho)$ solution to 
\begin{equation}\label{Linerized}
\left\{\begin{split}
(i) &-\partial_t v - \sigma \Delta v  +H_p(x,\nabla u)\cdot \nabla v =\frac{\delta f}{\delta m}[m](x)(\rho) +a(t,x)& {\rm in } \,\,Q,\\
(ii) &\partial_t \rho- \sigma \Delta \rho  -{\rm{div}}(\rho H_p(x,\nabla u) )-{\rm{div}}( mH_{pp} (x,\nabla u)\nabla v) ={\rm{div}} (b(t,x))& {\rm in } \,\,Q,  \\
&\rho(0,x)=0 , \; v(T, x)=\frac{\delta g}{\delta m}[m(T)](x)(\rho(T))+c(x)& {\rm in } \,\,\T^d.
\end{split}\right.
\end{equation}
one has 
$$
 \|v\|_{\mathcal C^{0,1}(Q)}+\|\rho\|_{\mathcal C^{0}(Q)} \leq C\big(\|a\|_{\mathcal C^{0}(Q)}+\|b\|_{\mathcal C^{0}(Q;\R^d)}+\|c\|_{\mathcal C^{0}(\T^d)}  \big).
$$
\end{lemma}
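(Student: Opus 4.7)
The natural plan is a standard contradiction-and-compactness argument exploiting that stability is precisely a uniqueness statement for the homogeneous linearized system.

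First I would argue by contradiction: suppose the claimed estimate fails. Then there exist sequences of data $(a_n,b_n,c_n)$ and corresponding solutions $(v_n,\rho_n)$ to \eqref{Linerized} with
\[
 \|v_n\|_{\mathcal C^{0,1}(Q)}+\|\rho_n\|_{\mathcal C^{0}(Q)} = 1
 \quad\text{while}\quad
 \|a_n\|_{\mathcal C^{0}(Q)}+\|b_n\|_{\mathcal C^{0}(Q;\R^d)}+\|c_n\|_{\mathcal C^{0}(\T^d)}\to 0.
\]
The goal is to extract a limit $(v_\infty,\rho_\infty)$ satisfying the homogeneous linearized system in the definition of stability (with $t_0=0$), hence $\equiv 0$, yet with $\|v_\infty\|_{\mathcal C^{0,1}(Q)}+\|\rho_\infty\|_{\mathcal C^0(Q)}=1$, a contradiction.

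The second step is to upgrade the uniform $\mathcal C^{0,1}\times \mathcal C^0$ bound into bounds in strictly better spaces so as to gain compactness. Because $(u,m)$ is a classical solution, Remark~\ref{Bernstein} together with assumption $(A2)$ yields that the coefficients $H_p(x,\nabla u)$ and $mH_{pp}(x,\nabla u)$ are in $\mathcal C^{\alpha/2,\alpha}(Q)$ with uniform bounds, and assumption $(A3)$ makes the linearized coupling $\rho\mapsto \frac{\delta f}{\delta m}[m](\rho)$ bounded from $\mathcal C^0$ into $\mathcal C^{\alpha/2,\alpha}$. Applying parabolic Schauder estimates to equation $(i)$ in \eqref{Linerized} gives a uniform bound on $v_n$ in $\mathcal C^{(1+\alpha)/2,1+\alpha}(Q)$, while standard $L^p$ and De Giorgi / Aronson-type estimates applied to the divergence-form equation $(ii)$ in \eqref{Linerized} give a uniform $\mathcal C^{\alpha/2,\alpha}(Q)$ bound on $\rho_n$. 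These compactness gains, via Ascoli–Arzelà, allow one to pass to subsequences with $v_n\to v_\infty$ in $\mathcal C^{0,1}(Q)$ and $\rho_n\to\rho_\infty$ in $\mathcal C^0(Q)$.

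Third, I would pass to the limit in the weak formulation of \eqref{Linerized}. Since $a_n,b_n,c_n\to 0$ uniformly and the coefficients of the system are continuous, the pair $(v_\infty,\rho_\infty)$ solves the homogeneous linearized system \eqref{Linear MFG} with $t_0=0$. By the stability assumption on $(u,m)$, this forces $(v_\infty,\rho_\infty)=(0,0)$. Because the convergence is strong in the norms in the estimate, this contradicts $\|v_n\|_{\mathcal C^{0,1}(Q)}+\|\rho_n\|_{\mathcal C^0(Q)}=1$.

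The main obstacle I expect is the compactness step for $\rho$: the FPK-type equation $(ii)$ is in divergence form with a rough forcing $\mathrm{div}(b)$, so one must avoid losing regularity. The cleanest route is duality — test with solutions of the adjoint of $(i)$ with suitably chosen terminal data to recover pointwise control of $\rho$, and then use parabolic Hölder theory for divergence-form equations to upgrade to uniform $\mathcal C^{\alpha/2,\alpha}$ bounds. Everything else is bookkeeping: checking that the source $\frac{\delta f}{\delta m}[m](\rho_n)$ and terminal term $\frac{\delta g}{\delta m}[m(T)](\rho_n(T))$ remain in the right class under assumption $(A3)$, and that the limiting system matches \eqref{Linear MFG} exactly.
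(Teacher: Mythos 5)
The paper offers no proof of this lemma: it is imported verbatim from \cite[Lemma 5.2]{bc}, so there is no internal argument to compare against. Your contradiction--compactness scheme is the standard route and is essentially the proof given in that reference (normalize, gain compactness from linear parabolic theory, pass to the limit in the homogeneous system \eqref{Linear MFG}, invoke stability to force the limit to vanish), so the overall architecture is sound. Two technical points need repair. First, Schauder estimates do not apply to equation $(i)$ of \eqref{Linerized} as you claim: $a_n$ and $c_n$ are only continuous, not H\"older, so you cannot conclude a uniform $\mathcal C^{(1+\alpha)/2,1+\alpha}(Q)$ bound on $v_n$; the correct tool in this paper's toolkit is the $L^r$ theory of \cref{linear estim Sobolev} together with the embedding \eqref{embedding}, and even then the merely continuous terminal perturbation $c_n$ obstructs a uniform gradient bound up to $t=T$, so the terminal layer must be treated separately (e.g.\ by subtracting the backward heat flow with datum $c_n$ and using heat-kernel bounds), a step your sketch does not address. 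Second, for $\rho_n$ your instinct to argue by duality is the right one and matches \cref{m stability} and \cref{H embedding} ($\mathcal H^1_r$ bounds via duality against solutions of the adjoint equation, then embedding into $\mathcal C^{\alpha/2,\alpha}(Q)$); interior De Giorgi--Nash theory alone would not give equicontinuity up to the parabolic boundary. With these two gaps filled, your argument closes and coincides with the proof in \cite{bc}.
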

\subsection{Some results from convex analysis}
The Lagrangian $L$ is the  convex conjugate of $H$ in \eqref{defH}: 
\begin{equation}\label{L H}
L(x,q)= \sup_{p\in \R^d} \; p\cdot q - H(x,p).
\end{equation}
\begin{remark}\label{H R}
Under assumption $(A2)$, it is known (e.g. \cite[Corollary A.2.7]{semiconcave}) that for $p\in \R^d$ there exists a unique value $q(x,p)=H_p(x,p)$ such that
$
H_{pp}(x,p)=\frac{1}{L_{qq}(x,q(x,p))},\,\,\text{hence}\,\,\frac{1}{\bar C} I_d\leq L_{qq}(x,q) \leq \bar C I_d.
$
 We use the index $\iota \in \{\iota_1,\iota_2\}$ such that $\| q^{(\iota)}\|_{L^\infty(Q;\R^d)}\leq R$, then there exists a constant $C$ depending only on $\bar C$ and $R$:
\begin{equation}\label{L Lipschitz}
\| L(x,q^{(\iota_1)})-L(x,q^{(\iota_2)})\|_{L^\infty(Q)} \leq C\| q^{(\iota_1)}-q^{(\iota_2)}\|_{L^\infty(Q;\R^d)}.
\end{equation}
\end{remark}
\begin{lemma}\label{H-L}
Assume $(A2)$ for $H$, $|p|\leq R_0$ and $\vert H_p(x,p)\vert \leq R$ with $p\in \R^d$. Then for all  $q\in \R^d$ such that $\vert q\vert \leq R$, there exists a constant $C$ depending only on $\bar C$ and $R$:
$$\frac{1}{2\bar C} \leftnorm q-H_p(x,p) \rightnorm^2 \leq H(x,p) + L(x,q) - \lg p,q\rg \leq C\leftnorm q-H_p(x,p) \rightnorm .$$
\end{lemma}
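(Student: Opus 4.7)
\medskip

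\noindent\textbf{Proof plan.} The quantity $H(x,p)+L(x,q)-\langle p,q\rangle$ is the Fenchel--Young residual associated with the Legendre pair $(H,L)$, so my plan is to exploit the duality identity together with the uniform strong convexity of $L$ in $q$ provided by $(A2)$. Write $q^{*}:=H_p(x,p)$, which by hypothesis satisfies $|q^{*}|\leq R$. From Remark~\ref{H R}, the map $q\mapsto L(x,q)$ is $C^{2}$ with $\frac{1}{\bar C}I_d\leq L_{qq}(x,q)\leq \bar C I_d$, and the Legendre duality gives $p=L_q(x,q^{*})$ together with the equality case $H(x,p)+L(x,q^{*})=\langle p,q^{*}\rangle$.

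The first step is a second-order Taylor expansion of $L(x,\cdot)$ around $q^{*}$: there exists $\xi$ on the segment joining $q^{*}$ and $q$ such that
\begin{equation*}
L(x,q)=L(x,q^{*})+\langle L_q(x,q^{*}),q-q^{*}\rangle+\tfrac{1}{2}\langle L_{qq}(x,\xi)(q-q^{*}),q-q^{*}\rangle.
\end{equation*}
Substituting $L_q(x,q^{*})=p$ and $L(x,q^{*})=\langle p,q^{*}\rangle-H(x,p)$, the linear term combines with $\langle p,q^{*}\rangle$ to produce $\langle p,q\rangle$, and I obtain the clean identity
\begin{equation*}
H(x,p)+L(x,q)-\langle p,q\rangle=\tfrac{1}{2}\langle L_{qq}(x,\xi)(q-q^{*}),q-q^{*}\rangle.
\end{equation*}

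From this identity the lower bound is immediate by applying $L_{qq}(x,\xi)\geq \frac{1}{\bar C}I_d$, which yields the factor $\frac{1}{2\bar C}|q-q^{*}|^{2}$ as required. For the upper bound, applying $L_{qq}(x,\xi)\leq \bar C I_d$ gives a quadratic estimate $\tfrac{\bar C}{2}|q-q^{*}|^{2}$; the key observation to turn this into the claimed \emph{linear} estimate is that $|q|,|q^{*}|\leq R$ forces $|q-q^{*}|\leq 2R$, so $|q-q^{*}|^{2}\leq 2R\,|q-q^{*}|$ and the bound holds with $C=\bar C R$.

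The only subtlety worth flagging is the necessity of the a priori ball constraint $|q|\leq R$ (and the analogous control on $q^{*}$ inherited from $|H_p(x,p)|\leq R$) for the upper bound: without it, only a quadratic estimate is available. The hypothesis $|p|\leq R_0$ plays no role beyond guaranteeing that $q^{*}$ is well defined and that the constants can be absorbed into the dependence on $\bar C$ and $R$. No further ingredient is needed, so I do not anticipate a genuine obstacle in carrying out the proof.
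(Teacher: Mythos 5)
Your proof is correct, and it takes a genuinely more unified route than the paper. The paper handles the two inequalities separately: for the upper bound it writes $H(x,p)+L(x,q)-\langle p,q\rangle=\langle p,H_p(x,p)-q\rangle+L(x,q)-L(x,H_p(x,p))$ and then invokes the boundedness of $p$ together with the Lipschitz estimate \eqref{L Lipschitz} on $L$; for the lower bound it simply cites \cite[Lemma 2.2]{ch}. You instead push the Fenchel--Young identity one step further via a second-order Taylor expansion of $L(x,\cdot)$ about $q^*=H_p(x,p)$, which makes the linear term cancel exactly and reduces \emph{both} inequalities to the two-sided bound $\frac{1}{\bar C}I_d\leq L_{qq}\leq \bar C I_d$ from Remark~\ref{H R}. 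This buys a self-contained proof with no external citation, an exact matching of the constant $\frac{1}{2\bar C}$ in the lower bound, and an upper-bound constant $\bar C R$ that visibly depends only on $\bar C$ and $R$ (the paper's route, as written, also picks up the bound $R_0$ on $|p|$ through the term $\langle p,H_p(x,p)-q\rangle$). What the paper's argument buys in exchange is that the upper bound needs only Lipschitz continuity of $L$ in $q$, not twice-differentiability; since $(A2)$ guarantees $L\in\mathcal C^2$ in $q$ with uniformly controlled Hessian, this costs you nothing here. Your closing remark that the linearization of the upper bound genuinely requires the a priori ball $|q|,|H_p(x,p)|\leq R$ is accurate and matches the role these hypotheses play in the paper.
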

\begin{proof} From \eqref{L H} and \eqref{L Lipschitz} we obtain
$$
H(x,p) + L(x,q) - \lg p,q\rg=\lg p,H_p(x,p)-q\rg+L(x,q)-L(x,H_p(x,p))\leq C\leftnorm q-H_p(x,p) \rightnorm.
$$
For the other side of the inequality we refer to \cite[Lemma 2.2]{ch}.
\end{proof}\par
Finally, we give a purely technical result from mathematical analysis. It plays a pivotal role in the theory of Fictitious Play and has been proved in \cite{MondrerShapley2} and \cite[Lemma 2.7]{ch}.
\begin{lemma}\label{a_n}
Consider a sequence of positive real numbers $\{ a_n \}_{n \in \N}$ such that $\sum_{n=1}^{\infty} a_n / n < +\infty$. Then we have ${\lim_{N \to \infty} \frac{1}{N} \sum_{n=1}^{N} a_n = 0}$.
In addition, if there is a constant $C >0 $ such that $\abs a_n - a_{n+1} \abs < C/n$ then $\lim_{n \to \infty} a_n = 0$.
\end{lemma}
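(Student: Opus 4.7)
My plan is to handle the two assertions separately. For the Cesaro-type convergence, I would invoke Kronecker's lemma with weights $b_n = n$ and summable terms $x_n = a_n/n$. A self-contained derivation proceeds via Abel summation: letting $S_N := \sum_{n=1}^N a_n/n$, which converges to some $S_\infty$ by hypothesis, write $a_n = n(S_n - S_{n-1})$ to obtain
\[
\frac{1}{N}\sum_{n=1}^N a_n \;=\; S_N \;-\; \frac{1}{N}\sum_{n=1}^{N-1} S_n \;\longrightarrow\; S_\infty - S_\infty \;=\; 0,
\]
the last step being the standard fact that the Cesaro mean of a convergent sequence converges to the same limit.

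For the second statement I would argue by contradiction. Suppose $a_n \not\to 0$; extract $\epsilon > 0$ and a subsequence $n_k \uparrow \infty$ with $a_{n_k} \geq 2\epsilon$. Telescoping the hypothesis $|a_n - a_{n+1}| < C/n$ gives, for every $j \geq 0$,
\[
a_{n_k+j} \;\geq\; a_{n_k} \;-\; \sum_{i=0}^{j-1}\frac{C}{n_k+i} \;\geq\; 2\epsilon - C\log\!\Bigl(1 + \tfrac{j}{n_k}\Bigr).
\]
Choose $\delta \in (0,1)$, depending only on $C$ and $\epsilon$, so that $C\log(1+\delta) \leq \epsilon$. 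Then $a_n \geq \epsilon$ throughout the window $[n_k, (1+\delta)n_k]$. After thinning $\{n_k\}$ so that $n_{k+1} > (1+\delta)n_k$ and the windows become pairwise disjoint, summing over these windows yields
\[
\sum_{n=1}^\infty \frac{a_n}{n} \;\geq\; \epsilon \sum_k \sum_{n_k \leq n \leq (1+\delta)n_k} \frac{1}{n} \;\geq\; \epsilon \sum_k \log(1 + \delta/2),
\]
which diverges and contradicts the standing hypothesis $\sum a_n/n < \infty$.

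The only real obstacle is calibrating the window length in the second step: the telescoped deviation of $a_n$ from $a_{n_k}$ grows like $C\log(\text{window ratio})$, so the windows must be short enough (as a fraction of $n_k$) to keep $a_n$ bounded below by $\epsilon$, yet long enough to contribute a fixed positive amount to $\sum a_n/n$. Both demands live on the same logarithmic scale, which is precisely what makes the choice of $\delta$ simultaneously compatible with the two requirements.
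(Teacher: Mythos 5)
Your proof is correct and follows the standard route — Kronecker's lemma via Abel summation for the Ces\`aro statement, and the disjoint-windows contradiction for the second claim — which is essentially the argument in the sources the paper cites for this lemma (Monderer--Shapley and Lemma 2.7 of Cardaliaguet--Hadikhanloo); the paper itself gives no proof. One cosmetic slip: the integral comparison gives $\sum_{i=0}^{j-1}(n_k+i)^{-1}\le \log\bigl(1+\tfrac{j}{n_k-1}\bigr)$, whereas $\log\bigl(1+\tfrac{j}{n_k}\bigr)$ is actually a \emph{lower} bound for that sum (alternatively just use $\sum_{i=0}^{j-1}(n_k+i)^{-1}\le j/n_k$); the discrepancy is $O(1/n_k)$ and is absorbed by calibrating $\delta$ so that $C\log(1+\delta)\le\epsilon/2$, so nothing in the argument breaks.
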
\par
\section{Main results}
We introduce two Smoothed Policy Iteration algorithms, referred to as algorithm \textbf{SPI1} and \textbf{SPI2}. Throughout this section, unless otherwise specified, $C$ is used to denote a generic positive constant which only depends on the data of the problem ($H$, $f$, $g$ and $m_0$) and may increase from line to line. 
\subsection{Algorithm SPI1}\label{Algo1}
Initialize a vector field   $q^{(0)}$, $q^{(0)}\in \mathcal C^{\alpha/2,\alpha}(Q;\R^d)$ and $\| q^{(0)}\|_{L^\infty(Q;\R^d)}\leq R$. Let $q^{(0)}=\bar  q^{(0)}$ and iterate for each $n\geq 0$:
\begin{itemize}
	\item[\textbf{(i)}] \textbf{Generate the distribution from the current policy}. Solve
	\begin{equation}\label{m update}
		\left\{\begin{split}
			&\partial_t m^{(n)}-\sigma \Delta m^{(n)}-\text{div} (m^{(n)} \bar{q}^{(n)})=0,\qquad &\text{ in }Q,\\
			& m^{(n)}(0,x)=m_0(x)&\text{ in }\T^d.
		\end{split}\right.
	\end{equation}
	\item[\textbf{(ii)}] \textbf{Policy evaluation}. Solve
	\begin{equation}\label{u update}
		\left\{\begin{split}
			&-\partial_t u^{(n)}- \sigma \Delta u^{(n)}+\bar{q}^{(n)} \cdot \nabla u^{(n)}-L(x,\bar{q}^{(n)})=f[m^{(n)}](x) &\text{ in }Q,\\
			&u^{(n)}(T,x)=g[m ^{(n)} (T)](x)&\text{ in }\T^d.
		\end{split}\right.
	\end{equation}
	\item[\textbf{(iii)}] \textbf{Policy update}.
	\begin{equation}\label{q update}
		q^{(n+1)}(t,x)={\arg\max}_{q\in \R^d,\,\vert q\vert \leq R}\left\{q\cdot \nabla u^{(n)}(t,x)-L(x,q)\right\}\qquad\text{ in }Q.
	\end{equation}
       \item[\textbf{(iv)}] \textbf{Smoothing}.
         Iteration terminates if $\|q^{(n+1)}-q^{(n)}\|_{L^\infty(Q;\R^d)}$ is small enough, else
	\begin{equation}\label{q bar update}
	\bar{q}^{(n+1)}=(1-\frac{2}{n+2})\bar{q}^{(n)}+\frac{2}{n+2}q^{(n+1)}.
	\end{equation}
\end{itemize}
The interpretation is that, in each stage $n$, the agent holds the belief that other agents are acting according to the smoothed policy $\bar{q}^{(n)}$, hence she generates from $\bar{q}^{(n)}$ the probability distribution $m^{(n)}$. She then evaluates $\bar{q}^{(n)}$ by solving for the value function $u^{(n)}$. She finds a greedy update $q^{(n+1)}$ based on $u^{(n)}$ and use it to update $\bar{q}^{(n+1)}$. \par
We denote: $\delta u^{(n+1)}=u^{(n+1)}-u^{(n)},\,\,\delta m^{(n+1)}=m^{(n+1)}-m^{(n)}$.
\begin{remark}\label{learning rate}
We can interpret the learning rate by rewriting \eqref{q bar update} as, $\forall n\geq 1$,
 \begin{equation}\label{learning rate sum}
\bar{q}^{(n)}=\frac{1}{\sum_{k=1}^{n}k}\sum_{k=1}^n kq^{(k)}.
	\end{equation}
	As an agent plays the game repeatedly, she puts increasingly larger weights in more recent observations of $q^{(n)}$ while updating the ``belief" $ \bar{q}^{(n)}$, cf. \cite[p. 191]{deschamps1975}.
	It is to be noted that all the following results in this subsection hold for \textbf{SPI1} if we replace the learning rate $2/(n+2)$ by $1/(n+2)$, i.e. $\forall n\geq 1$, 
	$$
\tilde{q}^{(n+1)}=(1-\frac{1}{n+2})\tilde{q}^{(n)}+\frac{1}{n+2}q^{(n+1)}=\frac{1}{n+2}\sum_{k=0}^{n+1} q^{(k)}.
	$$
	We focus on the $2/(n+2)$ learning rate as it speeds up the convergence from a numerical point of view, for comparison with learning rates in \textbf{FP} we refer to \cite{blp}.
	\end{remark}
	\begin{remark}\label{check}
We use an a posterior argument to check that $(u^{(n)},m^{(n)})$ terminates at a solution to \eqref{MFG}, if ideally, $\|q^{(N+1)}-q^{(N)}\|_{L^\infty(Q;\R^d)}=0$. \eqref{q update} implies then $\|\bar q^{(N+1)}-\bar q^{(N)}\|_{L^\infty(Q;\R^d)}=0$. From \cref{m stability} we obtain $\|m^{(N+1)}-m^{(N)}\|_{\mathcal H^1_r(Q)}=0$ and then $\|u^{(N+1)}-u^{(N)}\|_{W^{1,2}_r(Q)}=0$ from \cref{linear estim Sobolev}, hence $\|\nabla u^{(N+1)}-\nabla u^{(N)}\|_{L^\infty(Q;\R^d)}=0$ and $q^{(N)}=H_p(x,\nabla u^{(N)})$ pointwise. Morevoer, by induction $\|q^{(N+i)}-q^{(N)}\|_{L^\infty(Q;\R^d)}=0$ for all $i\geq 1$, i.e. $\{ q^{(n)}\}$ converges to $q^{(N)}$ uniformly. Hence using the expression \eqref{learning rate sum} and Silverman$-$Toeplitz theorem, $\{ \bar q^{(n)}\}$ converges to $q^{(N)}$ uniformly. Therefore, $(u^{(N)},m^{(N)})$ is a solution to \eqref{MFG}. This justifies using $\|q^{(N+1)}-q^{(N)}\|_{L^\infty(Q;\R^d)}$ small enough for the stopping criteria.
\end{remark}
\begin{lemma}\label{Solution u m1}
For each $n$, $n\geq 1$, there exists a unique solution 
$
\big(u^{(n)},m^{(n)}\big)\in \mathcal C^{1+\alpha/2,2+\alpha}(Q)\times  \mathcal C^{1+\alpha/2,2+\alpha}(Q)
$
 to the system \eqref{m update}-\eqref{u update}. Moreover, we have $ m^{(n)}\geq 1/C>0$ and
 \begin{equation}
\begin{split}
\|u^{(n)}\| _{\mathcal C^{1+\alpha/2,2+\alpha}(Q)}+\| m^{(n)}\| _{\mathcal C^{1+\alpha/2,2+\alpha}(Q)}&\\
+\|q^{(n)}\|_{\mathcal C^{\alpha/2,\alpha}(Q;\R^d)}
+\| \bar q^{(n)}\|_{\mathcal C^{\alpha/2,\alpha}(Q;\R^d)}&\leq C.
\end{split}
\end{equation}
\end{lemma}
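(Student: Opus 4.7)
The plan is to prove the statement by induction on $n \geq 0$, with induction hypothesis that $\bar{q}^{(n)} \in \mathcal{C}^{\alpha/2,\alpha}(Q;\R^d)$ with $\|\bar{q}^{(n)}\|_{L^\infty(Q;\R^d)} \leq R$ and $\mathcal{C}^{\alpha/2,\alpha}$-seminorm bounded uniformly in $n$; the base case $n=0$ is the initialization by assumption. In the inductive step, equation \eqref{m update} is a linear parabolic equation in divergence form with bounded Hölder drift $\bar{q}^{(n)}$ and datum $m_0 \in \mathcal{C}^{2+\alpha}(\T^d)$ from $(A1)$, so classical parabolic Schauder theory yields a unique classical solution $m^{(n)} \in \mathcal{C}^{1+\alpha/2,2+\alpha}(Q)$. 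The strict positivity $m^{(n)} \geq 1/C > 0$ would follow from the parabolic strong maximum principle applied to the equivalent non-divergence form $\partial_t m^{(n)} - \sigma \Delta m^{(n)} - \bar{q}^{(n)} \cdot \nabla m^{(n)} = m^{(n)}\,\mathrm{div}(\bar{q}^{(n)})$ (which has bounded coefficients), combined with the lower bound $m_0 \geq \vartheta > 0$ from $(A1)$.

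Next, using $(A3)$ and the Hölder regularity of $m^{(n)}$, one obtains $f[m^{(n)}] \in \mathcal{C}^{\alpha/2,\alpha}(Q)$ and $g[m^{(n)}(T)] \in \mathcal{C}^{2+\alpha}(\T^d)$ with norms controlled by $\|m^{(n)}\|_{\mathcal{C}^{\alpha/2,\alpha}}$, and $L(\cdot,\bar{q}^{(n)}) \in \mathcal{C}^{\alpha/2,\alpha}(Q)$ because $L$ inherits $\mathcal{C}^2$ regularity from $H$ via Legendre duality under $(A2)$ (cf. Remark~\ref{H R}). Hence \eqref{u update} is a linear backward parabolic equation with Hölder coefficients, source and terminal data, and Schauder estimates yield $u^{(n)} \in \mathcal{C}^{1+\alpha/2,2+\alpha}(Q)$. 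The $\arg\max$ in \eqref{q update} is realized explicitly as the projection $\Pi_{B_R}(H_p(x,\nabla u^{(n)}))$ onto the closed ball of radius $R$; since $\Pi_{B_R}$ is $1$-Lipschitz and $H_p \in \mathcal{C}^1$ by $(A2)$, composition with $\nabla u^{(n)} \in \mathcal{C}^{\alpha/2,\alpha}$ gives $q^{(n+1)} \in \mathcal{C}^{\alpha/2,\alpha}(Q;\R^d)$ with $\|q^{(n+1)}\|_{L^\infty} \leq R$. The smoothing \eqref{q bar update} is a convex combination, which preserves both the Hölder regularity and the $L^\infty$ bound $R$, completing the induction step for existence and qualitative regularity.

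The main obstacle is making the bounds uniform in $n$, since Schauder constants depend on the $\mathcal{C}^{\alpha/2,\alpha}$-seminorm of the drift $\bar{q}^{(n)}$, which could a priori grow with $n$. The $L^\infty$ control $\|\bar{q}^{(n)}\|_{L^\infty} \leq R$ is automatic from the truncation in \eqref{q update} and the fact that convex combinations of elements of the ball $\{|q|\leq R\}$ stay in that ball. To close the bootstrap, I would first apply $L^p$ parabolic theory (which requires only the uniform $L^\infty$ bound on $\bar{q}^{(n)}$) to \eqref{m update} and \eqref{u update}, obtaining $n$-independent $W^{1,2}_r(Q)$ bounds on $m^{(n)}$ and $u^{(n)}$; Sobolev embedding with $r$ chosen large then yields uniform $\mathcal{C}^{1+\beta/2,1+\beta}(Q)$ control on $u^{(n)}$ for any $\beta \in (0,1)$, in particular for $\beta \geq \alpha$. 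Through the $1$-Lipschitz projection this propagates to a uniform $\mathcal{C}^{\alpha/2,\alpha}$ bound on $q^{(n+1)}$, and hence, since convex combinations are nonexpansive on Hölder seminorms, to $\bar{q}^{(n+1)}$. With a uniform Hölder bound on the drift secured, rerunning the Schauder estimates of the inductive step delivers the claimed $n$-independent $\mathcal{C}^{1+\alpha/2,2+\alpha}$ control on $m^{(n)}$ and $u^{(n)}$, which together with the uniform bounds on $q^{(n)}, \bar{q}^{(n)}$ gives the stated inequality.
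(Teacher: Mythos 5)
Your argument is correct and follows essentially the same bootstrap as the paper: the automatic $L^\infty$ bound on $\bar q^{(n)}$ feeds into the $L^p$ parabolic estimates of \cref{linear estim Sobolev} and the embedding \eqref{embedding} to give $n$-independent H\"older control of $\nabla u^{(n)}$, which propagates to $q^{(n+1)}$ and $\bar q^{(n+1)}$ and then closes via the Schauder estimates of \cref{linear estim}, with the lower bound $m^{(n)}\geq 1/C$ coming from \cref{m stability}(iii). The only cosmetic differences are your packaging as an induction and your description of the constrained $\arg\max$ in \eqref{q update} as the Euclidean projection of $H_p(x,\nabla u^{(n)})$ onto $B_R$ (which is not literally the maximizer of a general concave function over a ball); the paper sidesteps this by taking $R$ large enough that the constraint is inactive, so that $q^{(n+1)}=H_p(x,\nabla u^{(n)})$.
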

\begin{proof}
We use a bootstrap argument. During the proof all bounds are independent of $n$. From $\| q^{(n)}\|_{L^\infty(Q;\R^d)} \leq R$, from \eqref{q bar update}, $(A3)$ and \cref{H-L}, we have:
\begin{equation*}
\begin{split}
\| \bar q^{(n)}\|_{L^\infty(Q;\R^d)}+\|L(\cdot,\bar q^{(n)})\|_{L^\infty(Q;\R^d)}&\\
 +\sup_{t\in [0,T]}\|f[m^{(n)}(t)](\cdot)\|_{\mathcal C^{2}(\T^d)}+\| g[m^{(n)}(T)](\cdot)\|_{\mathcal C^3(\T^d)}&\leq C,\,\,\,\forall n\geq 0.
\end{split}
\end{equation*}             
From \cref{linear estim Sobolev}, $\| u^{(n)}\| _{W^{1,2}_r(Q)}$ is bounded for all $r\in(d+2,+\infty)$, hence
$u^{(n)}$ is bounded in $\mathcal C^{1-\frac{d+2}{2r},2-\frac{d+2}{r}}(Q)$ and $\nabla u^{(n)}$ is bounded in $\mathcal C^{\alpha/2,\alpha}(Q;\R^d)$. With $(A2)$, we obtain that $q^{(n+1)}=H_p(x,\nabla u^{(n)})$ is bounded in $\mathcal C^{\alpha/2,\alpha}(Q;\R^d)$. Then for $n\geq 0$, by \eqref{q bar update}, $\bar q^{(n)}$ is bounded in $\mathcal C^{\alpha/2,\alpha}(Q;\R^d)$. From \cref{H-L}, $L(\cdot,\bar{q}^{(n)})$ is bounded in $\mathcal C^{\alpha/2,\alpha}(Q)$. For $n\geq 0$, $\|u^{(n)}\| _{\mathcal C^{1+\alpha/2,2+\alpha}(Q)}\leq C$ follows from \eqref{u update} and \cref{linear estim}. This and  ($A2$) allow us to obtain $\|{\rm{div}}H_p(\cdot,\nabla u^{(n)})\|_{\mathcal C^{\alpha/2,\alpha}(Q;\R^d)}\leq C$, $n\geq 1$. Then we have $\|{\rm{div}}\bar{q}^{(n)}\|_{\mathcal C^{\alpha/2,\alpha}(Q;\R^d)}\leq C$, $n\geq 1$ from \eqref{q bar update}. For $n\geq 1$, $\| m^{(n)}\| _{\mathcal C^{1+\alpha/2,2+\alpha}(Q)}\leq C$ follows from \cref{linear estim}. The bound $m^{(n)}\geq 1/C$ follows from \cref{m stability} (iii). 
\end{proof}

\begin{lemma}\label{C/n bound algo1}
There exist a constant $C$, such that for all $n\geq 1$,
\begin{equation}
\begin{split}
\| m^{(n+1)}-m^{(n)}\|_{\mathcal C^{\alpha/2,\alpha}(Q)}+\| u^{(n+1)}-u^{(n)}\| _{\mathcal C^{1+\alpha/2,2+\alpha}(Q)}&\\
+\| q^{(n+1)}- q^{(n)}\|_{\mathcal C^{\alpha/2,\alpha}(Q;\R^d)}&\leq \frac{C}{n}.
\end{split}
\end{equation}
\end{lemma}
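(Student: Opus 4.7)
The plan is to propagate the $1/n$ factor introduced by the smoothing step \eqref{q bar update} through the three coupled updates, in the order $\bar q\to m\to u\to q$. First I would rewrite the smoothing formula as $\bar q^{(n+1)}-\bar q^{(n)}=\frac{2}{n+2}(q^{(n+1)}-\bar q^{(n)})$; together with the uniform $\mathcal C^{\alpha/2,\alpha}(Q;\R^d)$ bounds on $q^{(n+1)}$ and $\bar q^{(n)}$ from \cref{Solution u m1}, this immediately gives
\begin{equation*}
\|\bar q^{(n+1)}-\bar q^{(n)}\|_{\mathcal C^{\alpha/2,\alpha}(Q;\R^d)}\leq \frac{C}{n+2},\qquad \forall n\geq 0.
\end{equation*}
This is the only place where the $1/n$ rate enters; the rest of the argument is purely linear stability of the HJB--FPK updates.

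Next I would subtract the FPK equations \eqref{m update} at indices $n+1$ and $n$. Using $m^{(n+1)}\bar q^{(n+1)}-m^{(n)}\bar q^{(n)}=\delta m^{(n+1)}\bar q^{(n+1)}+m^{(n)}(\bar q^{(n+1)}-\bar q^{(n)})$ yields a linear Fokker--Planck equation for $\delta m^{(n+1)}$ with zero initial datum and divergence-form source $\mathrm{div}\bigl(m^{(n)}(\bar q^{(n+1)}-\bar q^{(n)})\bigr)$ whose $\mathcal C^{\alpha/2,\alpha}(Q)$ norm is $O(1/n)$ by Step~1 and the uniform Hölder bounds on $m^{(n)}$ from \cref{Solution u m1}. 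Repeating the Sobolev/Schauder bootstrap from the proof of that lemma---first an $\mathcal H^1_r(Q)$ bound, then an upgrade via embedding and the interior Schauder estimate---gives $\|\delta m^{(n+1)}\|_{\mathcal C^{\alpha/2,\alpha}(Q)}\leq C/n$.

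For the value function I would subtract the two HJB equations \eqref{u update} to obtain
\begin{equation*}
-\partial_t\delta u^{(n+1)}-\sigma\Delta\delta u^{(n+1)}+\bar q^{(n+1)}\cdot\nabla\delta u^{(n+1)}=R_n,
\end{equation*}
with $R_n=(\bar q^{(n)}-\bar q^{(n+1)})\cdot\nabla u^{(n)}+L(x,\bar q^{(n+1)})-L(x,\bar q^{(n)})+f[m^{(n+1)}]-f[m^{(n)}]$ and terminal datum $g[m^{(n+1)}(T)]-g[m^{(n)}(T)]$. Each term in $R_n$ is $O(1/n)$ in $\mathcal C^{\alpha/2,\alpha}(Q)$: the first two by Step~1 combined with \eqref{L Lipschitz}, and the third by \eqref{f Lip} together with Step~2 and $\mathbf{d}_1(m^{(n+1)},m^{(n)})\leq d\|\delta m^{(n+1)}\|_{\mathcal C^0(\T^d)}$; the terminal contribution is handled analogously using \eqref{delta g Lip}. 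A parabolic Schauder estimate then yields $\|\delta u^{(n+1)}\|_{\mathcal C^{1+\alpha/2,2+\alpha}(Q)}\leq C/n$. To close the loop, the uniform bound on $\nabla u^{(n)}$ makes the constraint in \eqref{q update} inactive so that $q^{(n+1)}=H_p(x,\nabla u^{(n)})$, and the $\mathcal C^2$ regularity of $H$ from $(A2)$ gives
\begin{equation*}
\|q^{(n+1)}-q^{(n)}\|_{\mathcal C^{\alpha/2,\alpha}(Q;\R^d)}\leq C\|\nabla\delta u^{(n)}\|_{\mathcal C^{\alpha/2,\alpha}(Q;\R^d)}\leq C/n
\end{equation*}
using Step~3 at index $n-1$.

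The main obstacle will be the Hölder tracking in Step~2: the linearized FPK equation has a divergence-form source built from factors with only $\mathcal C^{\alpha/2,\alpha}$ regularity, so I need to verify that the Sobolev/Schauder bootstrap yields a constant independent of $n$. All remaining steps are routine applications of parabolic Schauder estimates to linear equations whose coefficients are uniformly controlled by \cref{Solution u m1}.
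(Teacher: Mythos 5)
Your proposal is correct and follows essentially the same route as the paper: extract the $O(1/n)$ bound on $\bar q^{(n+1)}-\bar q^{(n)}$ from the smoothing step, push it through the FPK stability estimate (your Step 2 is exactly \cref{m stability}(i) combined with the embedding of \cref{H embedding}, so no new bootstrap is needed and only the $\mathcal C^{\alpha/2,\alpha}$ norm of $\delta m^{(n+1)}$ is required), then apply the Schauder estimate of \cref{linear estim} to the subtracted HJB equations. Your final step for $q^{(n+1)}-q^{(n)}$ via $q^{(n+1)}=H_p(x,\nabla u^{(n)})$ is the intended (and in the paper implicit) closing argument.
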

\begin{proof}
We can obtain from \eqref{q bar update} and \cref{Solution u m1} that for all $n\geq 1$,
\begin{equation}\label{barq-barq}
\| \bar q^{(n+1)}- \bar q^{(n)}\|_{\mathcal C^{\alpha/2,\alpha}(Q;\R^d)} \leq \frac{2}{n+2}\|q^{(n+1)}- \bar q^{(n)}\|_{\mathcal C^{\alpha/2,\alpha}(Q;\R^d)}\leq \frac{C}{n}.
\end{equation}
It follows from \cref{m stability} and \cref{H embedding} that $\|\delta m\|_{\mathcal C^{\alpha/2,\alpha}(Q)}\leq C/n$. From \eqref{u update} we have 
\begin{equation}\label{U}
\begin{split}
&-\partial_t\delta u^{(n+1)}-\sigma \Delta \delta u^{(n+1)}+\bar q^{(n+1)}\cdot \nabla \delta u^{(n+1)}\\
={}&f[m^{(n+1)}](x)-f[m^{(n)}](x)-\nabla u^{(n)}(\bar q^{(n+1)}-\bar q^{(n)})+L(x,\bar q^{(n+1)})-L(x,\bar q^{(n)}),
\end{split}
\end{equation}
where $\delta u(T,x)=g[m^{(n+1)} (T)](x)-g[m^{(n)} (T)](x)$. From \cref{F f Lip},
$$
\|f[m^{(n+1)}](\cdot)-f[m^{(n)}](\cdot)\|_{\mathcal C^{\alpha/2,\alpha}(Q)}+\|\delta u(T,\cdot)\| _{\mathcal C^{2+\alpha}(\T^d)}\leq \frac{C}{n}.
$$
We can obtain from \eqref{L Lipschitz} and \cref{Solution u m1} that
$
\|-\nabla u^{(n)}(\bar q^{(n+1)}-\bar q^{(n)})+L(\cdot,\bar q^{(n+1)})-L(\cdot,\bar q^{(n)})\|_{\mathcal C^{\alpha/2,\alpha}(Q)}\leq \frac{C}{n}.
$
$\| \delta u^{(n+1)}\| _{\mathcal C^{1+\alpha/2,2+\alpha}(Q)}\leq \frac{C}{n}$ follows from \cref{linear estim}.
\end{proof}
\begin{theorem}\label{Maintheorem1} Under the assumptions (A1), (A2), (A3) and with $R$ sufficiently large, the family $\{ (u^n , m^n) \}$, $n \in \N$, in the algorithm \textbf{SPI1} is uniformly continuous and any cluster point is a solution to the  second order MFG \eqref{MFG}. If, in addition, the monotonicity condition (A4) holds, then the whole sequence converges to the unique solution of  \eqref{MFG}. 
\end{theorem}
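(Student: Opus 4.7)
The plan is a Fictitious Play style convergence argument. The key nonnegative one-step quantity is
$$
b_n := \int_0^T\!\!\int_{\T^d} \bigl|\,q^{(n+1)} - \bar q^{(n)}\,\bigr|^2\, m^{(n)}\, dx\, dt \;\ge 0.
$$
First, compactness: by the uniform H\"older bounds of \cref{Solution u m1}, Arzel\`a-Ascoli gives relative compactness of $\{(u^{(n)},m^{(n)},\bar q^{(n)},q^{(n)})\}$ in $\mathcal C^{1+\alpha'/2,2+\alpha'}(Q)\times \mathcal C^{1+\alpha'/2,2+\alpha'}(Q) \times \mathcal C^{\alpha'/2,\alpha'}(Q;\R^d)^2$ for any $\alpha'<\alpha$, which already proves the uniform-continuity statement. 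The remaining content is to identify cluster points with solutions of \eqref{MFG}.

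The central step is a decay estimate for the potential
$$
P^{(n)} := \int_0^T\!\!\int_{\T^d} L(x,\bar q^{(n)})\, m^{(n)}\, dx\, dt \;+\; \int_0^T F[m^{(n)}(t)]\, dt \;+\; G[m^{(n)}(T)].
$$
Computing $P^{(n+1)}-P^{(n)}$, I would Taylor expand $L$ around $\bar q^{(n)}$, use the flat-derivative identities \eqref{F Lip}--\eqref{G Lip} to rewrite the potential increments, and then test the HJB for $u^{(n)}$ against $\delta m^{(n+1)}:=m^{(n+1)}-m^{(n)}$ and the FPK for $\delta m^{(n+1)}$ against $u^{(n)}$. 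Since the smoothing rule \eqref{q bar update} gives $\bar q^{(n+1)}-\bar q^{(n)} = \frac{2}{n+2}(q^{(n+1)}-\bar q^{(n)})$, after cancellations the leading contribution reads
$$
P^{(n+1)}-P^{(n)} \;\le\; \frac{2}{n+2}\int_Q \bigl[L_q(x,\bar q^{(n)})-\nabla u^{(n)}\bigr]\cdot(q^{(n+1)}-\bar q^{(n)})\,m^{(n)}\,dx\,dt \;+\; \frac{C}{n^2}.
$$
The envelope identity behind \eqref{q update} gives $L_q(x,q^{(n+1)})=\nabla u^{(n)}$ (using $R$ sufficiently large), so the strong convexity $L_{qq}\ge \bar C^{-1}I_d$ of \cref{H R} yields $[L_q(x,\bar q^{(n)})-\nabla u^{(n)}]\cdot(q^{(n+1)}-\bar q^{(n)})\le -\bar C^{-1}|q^{(n+1)}-\bar q^{(n)}|^2$, hence
$$
P^{(n+1)}-P^{(n)} \;\le\; -\,\frac{c}{n}\, b_n \;+\; \frac{C}{n^2}.
$$
The $C/n^2$ remainder absorbs the quadratic terms in \eqref{F Lip}--\eqref{G Lip}, the cross term $\delta m\cdot \delta\bar q$, and the Lipschitz estimate \eqref{L Lipschitz}, all controlled via \cref{C/n bound algo1}. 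Since $P^{(n)}$ is uniformly bounded from below by \cref{Solution u m1}, telescoping yields $\sum_{n\ge1} b_n/n < +\infty$.

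Next, \cref{C/n bound algo1} combined with the uniform bounds immediately gives $|b_{n+1}-b_n|\le C/n$, so \cref{a_n} forces $b_n \to 0$. Because $m^{(n)}\ge 1/C$ on $Q$ by \cref{Solution u m1}, this yields $\|q^{(n+1)}-\bar q^{(n)}\|_{L^2(Q;\R^d)}\to 0$, which upgrades to $\|q^{(n+1)}-\bar q^{(n)}\|_{L^\infty(Q;\R^d)}\to 0$ by interpolation with the uniform $\mathcal C^{\alpha/2,\alpha}$ bound. Any cluster point $(u^\star,m^\star,\bar q^\star,q^\star)$ therefore satisfies $\bar q^\star = q^\star = H_p(x,\nabla u^\star)$, and passing to the limit in \eqref{m update}--\eqref{u update} shows that $(u^\star,m^\star)$ solves \eqref{MFG}. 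Under (A4), \cref{Existence} gives uniqueness of the classical solution, so every convergent subsequence has the same limit and the whole sequence $(u^{(n)},m^{(n)})$ converges to it.

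The main obstacle I anticipate is establishing the one-step inequality $P^{(n+1)}-P^{(n)}\le -\frac{c}{n}b_n + \frac{C}{n^2}$: the pair $(m^{(n+1)}, m^{(n+1)}\bar q^{(n+1)})$ is not a convex combination of $(m^{(n)}, m^{(n)}\bar q^{(n)})$ and any auxiliary $(\tilde m,\tilde m\,q^{(n+1)})$ (convex combinations of FPK solutions are natural only in the flux variable $w=mq$, not in the drift $q$), so the classical Frank-Wolfe/\textbf{FP} telescoping does not apply directly. The inequality must instead be produced by an energy computation using $u^{(n)}$ as test function, with careful bookkeeping of all second-order remainders from Taylor expansion of $L$ and from the flat-derivative bounds on $F$ and $G$.
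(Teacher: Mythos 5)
Your proposal is correct and follows essentially the same route as the paper: the quantity you telescope, $P^{(n)}$, is (after the HJB/FPK duality integration by parts) exactly the Lyapunov functional the paper telescopes, your one-step inequality $P^{(n+1)}-P^{(n)}\le -\frac{c}{n}b_n+\frac{C}{n^2}$ is the paper's key estimate with $q^{(n+1)}=H_p(x,\nabla u^{(n)})$ and the monotonicity of $L_q$ playing the role of \cref{H-L}, and the conclusion via \cref{a_n}, the lower bound $m^{(n)}\ge 1/C$, equicontinuity, and uniqueness under (A4) is identical. The "main obstacle" you flag is resolved precisely as you suggest — testing the HJB for $u^{(n)}$ against $\delta m^{(n+1)}$ and using the convexity of $L$ together with \eqref{q bar update}, with all remainders controlled by \cref{C/n bound algo1} — so there is no gap.
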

\begin{proof}
From the Bernstein estimate (\cref{Bernstein}) there exists a sufficiently large $R$ such that $$\|H_p(\cdot,\nabla u)\|_{L^\infty(Q;\R^d)}\leq R. $$We  show that $\{H_p(x,\nabla u^{(n)})- \bar q^{(n)}\}$ converges to $0$ uniformly. From the convexity of $L(x,q)$ in the $q-$variable, we have 
 \begin{equation*}\label{L convexity}
 L(x,\bar q^{(n+1)})\leq \frac{n}{n+2}L(x,\bar q^{(n)})+\frac{2}{n+2}L(x, q^{(n+1)}).
\end{equation*}
We can then obtain from equations \eqref{q update} and \eqref{q bar update} ,
\begin{equation*}
\begin{split}
&\frac{2}{n+2} H(x,\nabla u^{(n)})\\
= {} &\frac{2}{n+2} q^{(n+1)}\cdot \nabla u^{(n)}-\frac{2}{n+2}L(x, q^{(n+1)})\\
= {} &\big(\bar{q}^{(n+1)}-\frac{n}{n+2}\bar q^{(n)}\big)\cdot \nabla u^{(n)}-\frac{2}{n+2}L(x, q^{(n+1)})\\
\leq {} &-\frac{n}{n+2}\bar q^{(n)}\cdot \nabla u^{(n)}+ \bar{q}^{(n+1)} \cdot \nabla u^{(n)}+\frac{n}{n+2}L(x,\bar q^{(n)})-L(x,\bar q^{(n+1)}).
\end{split}
\end{equation*}
We can then obtain from \eqref{u update},
\begin{equation}\label{1/n+2 H 1}
\begin{split}
&\frac{2}{n+2} H(x,\nabla u^{(n)})-\frac{2}{n+2}\big(\bar q^{(n)}\cdot \nabla u^{(n)}-L(x,\bar q^{(n)}\big)\\
\leq {} &  -\bar{q}^{(n+1)} \cdot \nabla \delta u^{(n+1)}+\bar{q}^{(n+1)} \cdot \nabla u^{(n+1)}-L(x,\bar q^{(n+1)})-\bar{q}^{(n)} \cdot \nabla u^{(n)}+L(x,\bar q^{(n)})\\
\leq {} &  -\bar{q}^{(n+1)} \cdot \nabla \delta u^{(n+1)}+\partial_t \delta u^{(n+1)}+\sigma \Delta \delta u^{(n+1)}+f[m^{(n+1)}](x)-f[m^{(n)}](x)
\end{split}
\end{equation}
We can obtain from \cref{H-L}, \cref{Solution u m1} and Cauchy Schwartz inequality that
\begin{equation}\label{H-barq}
\begin{split}
 &\frac{1}{\bar{C}n}|H_p(x,\nabla u^{(n)})-\bar{q}^{(n)}|^2\\
 =  {} & \frac{2}{2\bar{C}(n+2)}|H_p(x,\nabla u^{(n)})-\bar{q}^{(n)}|^2+\frac{2|H_p(x,\nabla u^{(n)})-\bar{q}^{(n)}|^2}{\bar{C}n(n+2)}\\
\leq  {} & \frac{2}{2\bar{C}(n+2)}|H_p(x,\nabla u^{(n)})-\bar{q}^{(n)}|^2+\frac{4|H_p(x,\nabla u^{(n)})|^2+4|\bar{q}^{(n)}|^2}{\bar{C}n(n+2)}\\
\leq  {} & \frac{2}{2\bar{C}(n+2)}|H_p(x,\nabla u^{(n)})-\bar{q}^{(n)}|^2+\frac{C}{n^2}\\
\leq {}& \frac{2}{n+2} H(x,\nabla u^{(n)})-\frac{2}{n+2} \big(\bar{q}^{(n)} \cdot \nabla u^{(n)}-L(x,\bar{q}^{(n)})\big)+\frac{C}{n^2}\\
\leq  {} & \partial_t \delta u^{(n+1)}+\sigma \Delta \delta u^{(n+1)}-\bar{q}^{(n+1)} \cdot \nabla \delta u^{(n+1)}+f[m^{(n+1)}](x)-f[m^{(n)}](x)+\frac{C}{n^2},
\end{split}
\end{equation}
where the last inequality is obtained from \eqref{1/n+2 H 1}. 
 From integration by parts and \eqref{m update},
\begin{equation*}
\begin{split}
&\int_Qm^{(n+1)}\left(\partial_t \delta u^{(n+1)}+\sigma \Delta \delta u^{(n+1)}-\bar{q}^{(n+1)} \cdot \nabla \delta u^{(n+1)}\right)dxdt\\
={}&\int_{\T^d}m^{(n+1)}(T)\big(g[m^{(n+1)}(T)](x))-g[m^{(n)}(T)](x)\big)dx-\int_{\T^d}m_0(x)\delta u^{(n+1)}(0,x)dx.
\end{split}
\end{equation*}
We multiply both sides of \eqref{H-barq} by $m^{(n+1)}$, integrate on $Q$ to obtain
\begin{align*}
\begin{split}
&\frac{1}{\bar{C}}\int_Q \frac{1}{n}m^{(n+1)}|H_p(x,\nabla u^{(n)})-\bar{q}^{(n)}|^2dxdt\\
\leq {}&\int_Q \left(m^{(n+1)}f[m^{(n+1)}](x)-m^{(n)}f[m^{(n)}](x)\right)dxdt-\int_Qf[m^{(n)}](x)\delta m^{(n+1)}dxdt\\
&+ \int_{\T^d}\left(m^{(n+1)}(T)g[m^{(n+1)}(T)](x)-m^{(n)}(T)g[m^{(n)}(T)](x)\right)dx\\
&-\int_{\T^d}g[m^{(n)}(T)](x)\delta m^{(n+1)}(T,x)dx-\int_{\T^d}m_0(x)\delta u^{(n+1)}(0,x)dx+\frac{C}{n^2},
 \end{split}
 \end{align*}
where we used the trivial identity:
 $$
m^{(n+1)}\left( f[m^{(n+1)}]-f[m^{(n)}]\right)  = m^{(n+1)}f[m^{(n+1)}]-m^{(n)}f[m^{(n)}]-f[m^{(n)}]\delta m^{(n+1)}.
 $$
  We can then use \eqref{F Lip} and \eqref{G Lip} to obtain
 \begin{equation}
 \begin{split}
 &\frac{1}{\bar{C}}\int_Q \frac{1}{n}m^{(n+1)}|H_p(x,\nabla u^{(n)})-\bar{q}^{(n)}|^2dxdt\\
 \leq  {} &\int_0^T\left(F[m^{(n)}(t)]-F[m^{(n+1)}(t)]\right)dt+G[m^{(n)}(T)]-G[m^{(n+1)}(T)]+\frac{C}{n^2}\\
 {} &+\int_Q \left(m^{(n+1)}f[m^{(n+1)}](x)-m^{(n)}f[m^{(n)}](x)\right)dxdt-\int_{\T^d}m_0(x)\delta u^{(n+1)}(0,x)dx\\
 {} &+\int_{\T^d}\left(m^{(n+1)}(T)g[m^{(n+1)}(T)](x))-m^{(n)}(T)g[m^{(n)}(T)](x)\right)dx.
\end{split}
\end{equation}
Let
\begin{equation}\label{a_n 1}
a_n=\int_Q m^{(n+1)}|H_p(x,\nabla u^{(n)})-\bar{q}^{(n)}|^2dxdt,
\end{equation}
by telescoping we can see that, $\forall N\geq 2$,
\begin{equation*}
 \begin{split}
\frac{1}{\bar C}\sum_{n=1}^{N}\frac{a_n}{n}\leq  {} &\int_0^T\left(F[m^{(1)}(t)]-F[m^{(N+1)}(t)]\right)dt+G[m^{(1)}(T)]-G[m^{(N+1)}(T)]\\
 {} &+\int_Q \left(m^{(N+1)}f[m^{(N+1)}](x)-m^{(1)}f[m^{(1)}](x)\right)dxdt\\
 {}&-\int_{\T^d}m_0(x)\left(u^{(N+1)}(0,x)-u^{(1)}(0,x)\right)dx\\
 {} &+\int_{\T^d}\left(m^{(N+1)}(T)g[m^{(N+1)}(T)](x)-m^{(1)}(T,x)g[m^{(1)}(T)](x)\right)dx+C,
\end{split}
\end{equation*}
hence, $\sum_{n=1}^{\infty} a_n / n < +\infty$. One can obtain from Lemma \ref{Solution u m1} and \ref{C/n bound algo1} that $\abs a_n - a_{n+1} \abs < C/n$. Then we obtain $a_n\rightarrow 0$ from Lemma \ref{a_n}. From \cref{Solution u m1} we have $m^{(n)}(t,x)\geq 1/C$, $\forall (t,x)\in Q$, hence
$
\lim_{n\rightarrow \infty}\int_Q |H_p(x,\nabla u^{(n)})-\bar{q}^{(n)}|^2dxdt=0.
$
From \cref{Solution u m1}, the sequence $\{H_p(x,\nabla u^{(n)})- \bar q^{(n)}\}$ is uniformly continuous, hence converges to $0$ uniformly on $Q$.\par
From \cref{Solution u m1}, the sequence $\{\bar q^{(n)}\}$ is pre-compact for uniform convergence, one can extract a uniformly convergent subsequence $\{\bar q^{(n_i)}\}$. From \cref{m stability} and \cref{H embedding}, $\{m^{(n_i)}\}$ converges uniformly. It then follows from ($A3$) that $\{f[m^{(n_i)}]\}$ converges uniformly. From Proposition \ref{linear estim Sobolev} and the fact  $\{H_p(x,\nabla u^{(n_i)})- \bar q^{(n_i)}\}$ converges to $0$ uniformly, one can pass to the limit in \eqref{m update} and \eqref{u update}. Any cluster point of $\{u^{(n_i)}, m^{(n_i)}\}$, denoted by $(u,m)$, is then a classical solution to the system \eqref{MFG}. 
If $(A4)$ holds, then the MFG system has a unique classical solution, so that the whole sequence $\{(u^{(n)},m^{(n)})\}$ has a unique cluster point $(u,m)$ and thus converges uniformly to $(u,m)$. 
\end{proof}
\begin{remark}\label{f local} Similar results to \cref{Maintheorem1} hold under assumptions ($A1$) ($A2$) and ($A5$) if we replace the $f[m](x)$ and $g[m(T)](x)$ in system \eqref{MFG} and algorithm \textbf{SPI1}, by the local coupling ${\tilde f}(m)$ and $g(T,x)$ (c.f. \cref{Existence} and \cref{Bernstein}). \end{remark} 

\subsection{Algorithm \textbf{SPI2}}\label{Algo2}
{Initialize a vector field   $q^{(0)}$, $q^{(0)}\in \mathcal C^{\alpha/2,\alpha}(Q;\R^d)$ and $\| q^{(0)}\|_{L^\infty(Q;\R^d)}\leq R$}.  Iterate for each $n\geq 0$:
\begin{itemize}
	\item[\textbf{(i)}] \textbf{Generate the distribution from the current policy}. Solve
	\begin{equation}\label{m update 2}
		\left\{\begin{split}
		\qquad &	\partial_t m^{(n)}-\sigma \Delta m^{(n)}-\text{div} (m^{(n)} q^{(n)})=0,\qquad &\text{ in }Q,\\
			&m^{(n)}(0,x)=m_0(x)&\text{ in }\T^d.
		\end{split}\right.
	\end{equation}	
	\item[\textbf{(ii)}] \textbf{Smoothing}. $\bar m^{(0)}=m^{(0)}$, $\bar w^{(0)}=w^{(0)}$ and $\forall n\geq 1$, $w^{(n)}=q^{(n)}m^{(n)}$, 
	\begin{equation}\label{m bar update 2} 
	\big(\bar{m}^{(n)},\bar{w}^{(n)}\big)=(1-\frac{2}{n+1})\big(\bar{m}^{(n-1)},\bar{w}^{(n-1)}\big)+\frac{2}{n+1}\big(m^{(n)},w^{(n)}\big). 
	\end{equation}
	
	\item[\textbf{(iii)}] Let $\hat{q}^{(0)}=q^{(0)}$ and $\forall n\geq 1$: $\hat{q}^{(n)}=\frac{\bar{w}^{(n)}}{\bar{m}^{(n)}}$.
		\item[\textbf{(iv)}] Solve
	\begin{equation}\label{u update 2}
		\left\{\begin{split}
		\qquad &	-\partial_t u^{(n)}- \sigma \Delta u^{(n)}+\hat{q}^{(n)} \cdot \nabla u^{(n)}-L(x,\hat{q}^{(n)})=f[\bar{m}^{(n)}](x)&\text{ in }Q,\\
		&	u^{(n)}(T,x)=g[\bar m^{(n)}(T)](x)&\text{ in }\T^d.
		\end{split}\right.
	\end{equation}
	\item[\textbf{(v)}] 
	\begin{equation}\label{q update 2}
		q^{(n+1)}(t,x)={\arg\max}_{q\in \R^d,\,\vert q\vert \leq R}\left\{q\cdot \nabla u^{(n)}(t,x)-L(x,q)\right\}\qquad\text{ in }Q.
	\end{equation}
	 Iteration terminates if $\|q^{(n+1)}-q^{(n)}\|_{L^\infty(Q;\R^d)}$ small enough, else set $n\leftarrow n+1$ and continue.
\end{itemize}
The interpretation is that, in each stage $n$, the agent first uses a policy $q^{(n)}$ to generate the probability distribution $m^{(n)}$. The agent then uses the averaging flux $\bar{w}^{(n)}$ and distribution $\bar{m}^{(n)}$ to find $\hat{q}^{(n)}$ (the policy which would have generated $\bar{m}^{(n)}$) and takes it as the policy of her opponents and $\bar m^{(n)}$ as belief in the evolution of the distribution. She then evaluates $\hat{q}^{(n)}$ and performs a greedy update. The learning rate has the same interpretation as in Remark \ref{learning rate}. \par
One can use the same methodology as \cref{check} to check that $\big(u^{(N)},m^{(N)}\big)$ in \textbf{SPI2} is a solution to \eqref{MFG}, if ideally $\|q^{(N+1)}-q^{(N)}\|_{L^\infty(Q;\R^d)}=0$.\par
Here we denote again, 
$
\delta u^{(n+1)}=u^{(n+1)}-u^{(n)},\,\,\delta m^{(n+1)}=m^{(n+1)}-m^{(n)}.
$
\begin{lemma}\label{Solution u m2}
For each $n$, $n\geq 1$, there exists a unique solution
$
\big(u^{(n)},m^{(n)}\big)\in \mathcal C^{1+\alpha/2,2+\alpha}(Q)\times \mathcal C^{1+\alpha/2,2+\alpha}(Q)
$
to the system \eqref{m update 2}-\eqref{u update 2}. Moreover, $$\| \hat q^{(n)}\|_{L^\infty(Q;\R^d)}\leq R,$$
\begin{equation}
\|u^{(n)}\| _{\mathcal C^{1+\alpha/2,2+\alpha}(Q)}+\| m^{(n)}\| _{\mathcal C^{1+\alpha/2,2+\alpha}(Q)}+\|\bar{m}^{(n)}\| _{\mathcal C^{1+\alpha/2,2+\alpha}(Q)}\leq C,
\end{equation}
$m^{(n)}\geq 1/C$ and $\bar m^{(n)}\geq 1/C$.
\end{lemma}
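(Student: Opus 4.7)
The plan is to follow the same bootstrap strategy as in the proof of \cref{Solution u m1}, with the extra work concentrated on the new object $\hat{q}^{(n)}=\bar{w}^{(n)}/\bar{m}^{(n)}$, which is no longer a convex combination of previous policies but the quotient of convex combinations of fluxes and densities. First, I would establish the $L^\infty$ bound $\|\hat{q}^{(n)}\|_{L^\infty(Q;\R^d)}\le R$ by induction on $n$: the policy update \eqref{q update 2} enforces $\|q^{(n)}\|_{L^\infty}\le R$ for every $n\ge 1$, so $|w^{(n)}|=|q^{(n)}|m^{(n)}\le R\,m^{(n)}$ pointwise. The base case $\hat{q}^{(0)}=q^{(0)}$ is trivial, and assuming $|\bar{w}^{(n-1)}|\le R\,\bar{m}^{(n-1)}$ the convex combination \eqref{m bar update 2} with non-negative weights yields $|\bar{w}^{(n)}|\le R\,\bar{m}^{(n)}$, hence $|\hat{q}^{(n)}|\le R$.

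With the $L^\infty$ bound on the drift in place, I would run the Schauder bootstrap one step at a time. Given $u^{(n-1)}\in\mathcal C^{1+\alpha/2,2+\alpha}(Q)$ from the previous stage, \eqref{q update 2} and the smoothness of $L$ guaranteed by ($A2$) (via \cref{H R}) yield $q^{(n)}\in\mathcal C^{\alpha/2,\alpha}(Q;\R^d)$. Solvability and the Hölder estimate for $m^{(n)}$ from \eqref{m update 2} follow from \cref{linear estim}, and the uniform lower bound $m^{(n)}\ge 1/C$ is provided by \cref{m stability} (iii); the bound is independent of $n$ because the drift is bounded by $R$ and $m_0\ge\vartheta$. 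The convex combinations in \eqref{m bar update 2} then give $\bar{m}^{(n)}\ge 1/C$ and Hölder regularity of both $\bar{m}^{(n)}$ and $\bar{w}^{(n)}$. Since the denominator is uniformly bounded away from $0$, the quotient $\hat{q}^{(n)}$ lies in $\mathcal C^{\alpha/2,\alpha}(Q;\R^d)$, and \cref{H-L} with \eqref{L Lipschitz} provides the corresponding bound on $L(\cdot,\hat{q}^{(n)})$. With the right-hand side in \eqref{u update 2} Hölder controlled (using ($A3$) applied to $\bar{m}^{(n)}$), \cref{linear estim} yields $u^{(n)}\in\mathcal C^{1+\alpha/2,2+\alpha}(Q)$ and closes the induction; uniqueness at each stage is immediate since \eqref{m update 2}, \eqref{u update 2} are linear parabolic equations once the coefficients are fixed.

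The only subtlety, and therefore the step that I expect to require the most care, is verifying that all of the bounds constructed above are independent of $n$. The $L^\infty$ bound $\|\hat{q}^{(n)}\|_{L^\infty}\le R$ is $n$-uniform by design, and this is what drives $n$-uniform lower and upper bounds on $m^{(n)}$ via \cref{m stability}; convexity then transfers the uniform lower bound to $\bar{m}^{(n)}$. The Hölder semi-norm of $\hat{q}^{(n)}$ is controlled by the Hölder semi-norms of $\bar{m}^{(n)}$ and $\bar{w}^{(n)}$ divided by $\inf\bar{m}^{(n)}\ge 1/C$, so once the regularity bounds on $m^{(n)}$ are $n$-uniform the same is true for $\hat{q}^{(n)}$, and the Schauder constants in \cref{linear estim} applied to \eqref{u update 2} and \eqref{m update 2} at stage $n+1$ remain bounded as $n\to\infty$.
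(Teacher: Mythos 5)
Your proposal is correct and follows essentially the same route as the paper: the pointwise inequality $|w^{(n)}|\le R\,m^{(n)}$ propagated through the convex combinations \eqref{m bar update 2} to give $\|\hat q^{(n)}\|_{L^\infty}\le R$, followed by the same Schauder bootstrap as in \cref{Solution u m1} using \cref{m stability}, \cref{linear estim} and \cref{linear estim Sobolev}. The paper states the $L^\infty$ step and then omits the bootstrap details, which you have filled in consistently with its treatment of \textbf{SPI1}.
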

\begin{proof}
From $\| q^{(n)}\|_{L^\infty(Q;\R^d)} \leq R$ we can obtain the bound on $\|m^{(n)}\|_{\mathcal C^{0}(Q)}$ from Proposition \ref{m stability} and Remark \ref{H embedding}. Then 
$
\|\bar m^{(n)}\|_{\mathcal C^{0}(Q)}, \,\,\| w^{(n)}\|_{\mathcal C^{0}(Q)}, \,\,\| \bar w^{(n)}\|_{\mathcal C^{0}(Q;\R^d)}$
are bounded independent of $n$. Since $-Rm^{(n)}(t,x)\leq w^{(n)}(t,x)\leq Rm^{(n)}(t,x)$ for all $n\geq 0$ and $(t,x)\in Q$, from \eqref{m bar update 2} we have
$-R\bar m^{(n)}(t,x)\leq \bar w^{(n)}(t,x)\leq R\bar m^{(n)}(t,x)$,
 for all $n\geq 0$ and $(t,x)\in Q$. From step (\textbf{iii}) we have $\| \hat q^{(n)}\|_{L^\infty(Q;\R^d)}\leq R$. The rest of the proof is very similar to the bootstrap argument of \cref{Solution u m1}, using \cref{linear estim} and \cref{linear estim Sobolev}, hence we omit the details.
\end{proof}
\begin{lemma}\label{C/n bound algo2}
There exist a constant $C$, such that for all $n\geq 1$, 
\begin{equation}
\begin{split}
\| \bar m^{(n+1)}-\bar m^{(n)}\|_{\mathcal C^{(1+\alpha)/2,1+\alpha}(Q)}+\| \bar w^{(n+1)}-\bar w^{(n)}\|_{\mathcal C^{\alpha/2,\alpha}(Q;\R^d)}&\\
+\| \hat q^{(n+1)}- \hat q^{(n)}\|_{\mathcal C^{\alpha/2,\alpha}(Q;\R^d)}+\| u^{(n+1)}-u^{(n)}\| _{\mathcal C^{1+\alpha/2,2+\alpha}(Q)}&\\
+\| q^{(n+1)}- q^{(n)}\|_{\mathcal C^{\alpha/2,\alpha}(Q;\R^d)}&\leq \frac{C}{n}.
\end{split}
\end{equation}
\end{lemma}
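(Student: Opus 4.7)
The plan is to reduce the various bounds to the same simple source: the factor $2/(n+2)$ appearing in the smoothing step \eqref{m bar update 2}, combined with the uniform bounds from \cref{Solution u m2}. Unlike \textbf{SPI1}, the averaging here is on $(m,w)$ rather than on the policy, so the $C/n$ decay is most directly visible for the \emph{distribution and flux}, and must be transferred to $\hat q^{(n)}$, $u^{(n)}$, and $q^{(n)}$ in turn.

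First I would bound $\|\bar m^{(n+1)}-\bar m^{(n)}\|$ and $\|\bar w^{(n+1)}-\bar w^{(n)}\|$. From \eqref{m bar update 2},
\begin{equation*}
\bar m^{(n+1)}-\bar m^{(n)}=\frac{2}{n+2}\bigl(m^{(n+1)}-\bar m^{(n)}\bigr),\qquad
\bar w^{(n+1)}-\bar w^{(n)}=\frac{2}{n+2}\bigl(w^{(n+1)}-\bar w^{(n)}\bigr),
\end{equation*}
and \cref{Solution u m2} provides uniform $\mathcal C^{1+\alpha/2,2+\alpha}(Q)$ bounds on $m^{(n)}$ and $\bar m^{(n)}$ (and likewise on $w^{(n)}=q^{(n)}m^{(n)}$ and $\bar w^{(n)}$, using $\|q^{(n)}\|_{L^\infty}\leq R$ together with the Hölder regularity of $q^{(n+1)}=H_p(x,\nabla u^{(n)})$). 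This embeds into $\mathcal C^{(1+\alpha)/2,1+\alpha}(Q)$ and $\mathcal C^{\alpha/2,\alpha}(Q;\R^d)$, yielding the first two estimates.

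Next I would handle $\hat q^{(n)}=\bar w^{(n)}/\bar m^{(n)}$. Writing
\begin{equation*}
\hat q^{(n+1)}-\hat q^{(n)}=\frac{\bar w^{(n+1)}-\bar w^{(n)}}{\bar m^{(n+1)}}-\bar w^{(n)}\,\frac{\bar m^{(n+1)}-\bar m^{(n)}}{\bar m^{(n+1)}\,\bar m^{(n)}},
\end{equation*}
the lower bound $\bar m^{(n)}\geq 1/C$ from \cref{Solution u m2} together with the uniform $\mathcal C^{\alpha/2,\alpha}$ bounds makes $\mathcal C^{\alpha/2,\alpha}$ a Banach algebra in which division by $\bar m^{(n)}$ is continuous, so $\|\hat q^{(n+1)}-\hat q^{(n)}\|_{\mathcal C^{\alpha/2,\alpha}(Q;\R^d)}\leq C/n$.

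For $\delta u^{(n+1)}$, subtract \eqref{u update 2} at levels $n$ and $n+1$:
\begin{equation*}
-\partial_t \delta u^{(n+1)}-\sigma\Delta \delta u^{(n+1)}+\hat q^{(n+1)}\!\cdot\!\nabla \delta u^{(n+1)}
=f[\bar m^{(n+1)}]-f[\bar m^{(n)}]-\nabla u^{(n)}\!\cdot\!(\hat q^{(n+1)}-\hat q^{(n)})+L(x,\hat q^{(n+1)})-L(x,\hat q^{(n)}),
\end{equation*}
with $\delta u^{(n+1)}(T,x)=g[\bar m^{(n+1)}(T)]-g[\bar m^{(n)}(T)]$. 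By \cref{F f Lip}, the bound on $\bar m^{(n+1)}-\bar m^{(n)}$ just obtained, the $C/n$ control of $\hat q^{(n+1)}-\hat q^{(n)}$, and the local Lipschitz estimate \eqref{L Lipschitz} for $L$, the right-hand side and terminal datum are $O(1/n)$ in $\mathcal C^{\alpha/2,\alpha}(Q)$ and $\mathcal C^{2+\alpha}(\T^d)$ respectively. The Schauder-type estimate \cref{linear estim} then gives $\|\delta u^{(n+1)}\|_{\mathcal C^{1+\alpha/2,2+\alpha}(Q)}\leq C/n$. Finally, $q^{(n+1)}-q^{(n)}=H_p(x,\nabla u^{(n)})-H_p(x,\nabla u^{(n-1)})$; by $(A2)$ and the $C/(n-1)$ bound on $\delta u^{(n)}$ (shifting the index), the last estimate follows.

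The only real subtlety, and hence the main obstacle, is to ensure that the Hölder norms of $w^{(n)}=q^{(n)}m^{(n)}$ and the ratio $\hat q^{(n)}=\bar w^{(n)}/\bar m^{(n)}$ are genuinely controlled uniformly in $n$ with the required exponents, since the target norm on $\bar m^{(n+1)}-\bar m^{(n)}$ is $\mathcal C^{(1+\alpha)/2,1+\alpha}$, one notch above the ambient Hölder scale. This forces one to exploit the full $\mathcal C^{1+\alpha/2,2+\alpha}$ regularity of $m^{(n)}$ and $\bar m^{(n)}$ from \cref{Solution u m2} rather than the weaker $\mathcal C^{\alpha/2,\alpha}$ bound; once this embedding is invoked, the rest is a clean propagation of the $2/(n+2)$ decay along the chain $(\bar m,\bar w)\to \hat q\to u\to q$.
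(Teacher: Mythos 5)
Your proposal is correct and follows essentially the same route as the paper: extract the $2/(n+2)$ factor from the smoothing step \eqref{m bar update 2} together with the uniform $\mathcal C^{1+\alpha/2,2+\alpha}$ bounds of \cref{Solution u m2} to control $\bar m^{(n+1)}-\bar m^{(n)}$ and $\bar w^{(n+1)}-\bar w^{(n)}$, pass to $\hat q$ via the quotient formula and the lower bound $\bar m^{(n)}\geq 1/C$, and then propagate to $u$ and $q$ by subtracting the HJB equations and applying \cref{linear estim} exactly as in \cref{C/n bound algo1}. The details you supply for the $u$ and $q$ estimates are precisely the ones the paper omits with ``we omit the details for the rest of the proof.''
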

\begin{proof}
We can obtain from \eqref{m bar update 2}, step (\textbf{iii}) and \cref{Solution u m2} that for all $n\geq 1$,
$$
\| \bar m^{(n+1)}-\bar m^{(n)}\|_{\mathcal C^{(1+\alpha)/2,1+\alpha}(Q)}+\| \bar w^{(n+1)}-\bar w^{(n)}\|_{\mathcal C^{\alpha/2,\alpha}(Q;\R^d)}\leq \frac{C}{n},
$$
\begin{equation}
\begin{split}
&\|\hat q^{(n+1)}- \hat q^{(n)}\|_{\mathcal C^{\alpha/2,\alpha}(Q;\R^d)}\\
={}&\|\frac{(\bar w^{(n+1)}-\bar w^{(n)})m^{(n)}-\bar w^{(n)}(\bar m^{(n+1)}-\bar m^{(n)})}{\bar m^{(n+1)}\bar m^{(n)}}\|_{\mathcal C^{\alpha/2,\alpha}(Q;\R^d)}\leq \frac{C}{n}.
\end{split}
\end{equation}
We omit the details for the rest of the proof.
\end{proof}

\begin{theorem}\label{Maintheorem2} Under the assumptions (A1), (A2), (A3) and with $R$ sufficiently large, the family $\{ (u^n , m^n) \}$, $n \in \N$, in the algorithm \textbf{SPI2} is uniformly continuous and any cluster point is a solution to the  second order MFG \eqref{MFG}. If, in addition, the monotonicity condition (A4) holds, then the whole sequence converges to the unique solution of  \eqref{MFG}. 
\end{theorem}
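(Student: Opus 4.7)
The strategy mirrors that of Theorem \ref{Maintheorem1}, with the scalar convexity of $L(x,\cdot)$ in the policy variable (used in SPI1 through the smoothed policy $\bar q^{(n)}$) replaced by the joint perspective convexity of the Lagrangian action $(m,w)\mapsto m L(x, w/m)$ on $\{m>0\}$. This is the natural substitute once smoothing is performed at the distribution-flux level via \eqref{m bar update 2}. The target is again to prove $\{H_p(x,\nabla u^{(n)}) - \hat q^{(n)}\}$ converges to $0$ uniformly on $Q$; then subsequential compactness from \cref{Solution u m2} together with \eqref{m update 2}--\eqref{u update 2} yields that any cluster point of $\{(u^{(n)},m^{(n)})\}$ solves \eqref{MFG}, while (A4) upgrades subsequential to full convergence.

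Set $\lambda = 2/(n+2)$. Perspective convexity applied to \eqref{m bar update 2} gives
$$
\bar m^{(n+1)} L(x,\hat q^{(n+1)}) \leq (1-\lambda)\bar m^{(n)} L(x,\hat q^{(n)}) + \lambda m^{(n+1)} L(x, q^{(n+1)}).
$$
Combining with \eqref{q update 2}, the flux identity $\lambda w^{(n+1)} = \bar w^{(n+1)} - (1-\lambda)\bar w^{(n)}$, and the convex-combination consistency $(1-\lambda)\bar m^{(n)} + \lambda m^{(n+1)} = \bar m^{(n+1)}$, one rearranges to
$$
\lambda m^{(n+1)} \bigl[H(x,\nabla u^{(n)}) - \hat q^{(n)}\cdot \nabla u^{(n)} + L(x,\hat q^{(n)})\bigr] \leq \bar m^{(n+1)} \Delta_{n+1},
$$
where $\Delta_{n+1} := [\hat q^{(n+1)}\cdot \nabla u^{(n)} - L(x,\hat q^{(n+1)})] - [\hat q^{(n)}\cdot \nabla u^{(n)} - L(x,\hat q^{(n)})]$. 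By \cref{H-L} the bracket on the LHS dominates $\frac{1}{2\bar C}|H_p(x,\nabla u^{(n)}) - \hat q^{(n)}|^2$, while the HJB \eqref{u update 2} at steps $n$ and $n+1$ rewrites $\Delta_{n+1}$ as $f[\bar m^{(n+1)}] - f[\bar m^{(n)}] + \partial_t \delta u^{(n+1)} + \sigma\Delta \delta u^{(n+1)} - \hat q^{(n+1)}\cdot \nabla \delta u^{(n+1)}$.

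Integrating over $Q$ and using integration by parts against the FPK equation satisfied by $\bar m^{(n+1)}$ with drift $\hat q^{(n+1)}$ (inherited from \eqref{m update 2}--\eqref{m bar update 2}, with $\bar m^{(n+1)}(0,\cdot)=m_0$), the transport and diffusion terms collapse to boundary contributions. Summing $n=1,\dots,N$ and telescoping the $F[\bar m^{(n)}]$, $G[\bar m^{(n)}(T)]$, $\int \bar m^{(n)} f[\bar m^{(n)}]$, $\int \bar m^{(n)}(T) g[\bar m^{(n)}(T)]$, and $\int m_0 u^{(n)}(0,\cdot)$ terms, with \eqref{F Lip}--\eqref{G Lip} controlling the quadratic residuals, yields
$$
\sum_{n=1}^{\infty}\frac{1}{n+2}\int_Q m^{(n+1)}|H_p(x,\nabla u^{(n)}) - \hat q^{(n)}|^2\,dx\,dt \leq C.
$$
Setting $a_n$ to this integral and using $m^{(n+1)}\geq 1/C$ from \cref{Solution u m2}, \cref{Solution u m2} and \cref{C/n bound algo2} yield $|a_{n+1}-a_n|\leq C/n$, so \cref{a_n} gives $a_n\to 0$. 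Equicontinuity of $\{H_p(x,\nabla u^{(n)}) - \hat q^{(n)}\}$ from \cref{Solution u m2} upgrades $L^2$-convergence to uniform convergence on $Q$.

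Passage to the limit is as in \cref{Maintheorem1}: extract a uniformly convergent subsequence $\{\hat q^{(n_i)}\}$, deduce convergence of $\{\bar m^{(n_i)}\}$ by FPK stability, of $\{u^{(n_i)}\}$ from the HJB, and of $\{m^{(n_i)}\}$ by FPK uniqueness with the limiting drift $H_p(x,\nabla u)$; since $m^{(n_i)}$ and $\bar m^{(n_i)}$ share the same initial datum $m_0$ and the same limit drift, their limits coincide. The pair $(u,\lim m^{(n_i)})$ then solves \eqref{MFG}, and (A4) forces all cluster points to agree, so the full sequence converges. The principal obstacle is formulating the perspective-convexity step so that the integration by parts against the FPK for $\bar m^{(n+1)}$ cleanly annihilates the $\delta u^{(n+1)}$ transport terms; the subsequent telescoping and compactness arguments parallel those established for SPI1.
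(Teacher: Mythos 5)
Your proposal is correct and takes essentially the same route as the paper's proof: the joint (perspective) convexity of $(m,w)\mapsto mL(x,w/m)$ applied to the smoothing step \eqref{m bar update 2}, the flux identity together with the HJB equations at steps $n$ and $n+1$ to produce the key pointwise inequality, integration by parts against the FPK equation satisfied by $\bar m^{(n+1)}$ with flux $\bar w^{(n+1)}=\bar m^{(n+1)}\hat q^{(n+1)}$, telescoping with \eqref{F Lip}--\eqref{G Lip}, and \cref{a_n} combined with the lower bound $m^{(n+1)}\geq 1/C$ and equicontinuity to get uniform convergence of $H_p(x,\nabla u^{(n)})-\hat q^{(n)}$ to $0$. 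The subsequential limit passage (identifying $m=\bar m$ and $H_p(x,\nabla u)=\bar w/\bar m$) and the use of (A4) for full convergence also coincide with the paper's argument.
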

\begin{proof}
From the Bernstein estimate (\cref{Bernstein}) there exists a sufficiently large $R$ such that $\|H_p(\cdot,\nabla u)\|_{L^\infty(Q;\R^d)}\leq R$.\par
 We recall that $mL(x,w/m)$ is jointly convex in the $(w,m)$ variables. Hence
\begin{equation}\label{mL convexity}
 \bar{m} ^{(n+1)}L(x,\frac{\bar w^{(n+1)}}{\bar{m} ^{(n+1)}})\leq \frac{n}{n+2} \bar{m} ^{(n)}L(x,\frac{\bar w^{(n)}}{\bar{m} ^{(n)}})+\frac{2}{n+2} m ^{(n+1)}L(x, \frac{ w^{(n+1)}}{m ^{(n+1)}}).
\end{equation}
 Moreover, for $\hat{q}^{(n+1)}$ and $\bar{w} ^{(n+1)}$ defined as in step \textbf{(iii)},
 \begin{equation}\label{mqDu}
 \begin{split}
&\bar{m} ^{(n+1)} \hat{q}^{(n+1)} \cdot \nabla u^{(n+1)}\\
= {} &\bar{w} ^{(n+1)}\cdot \nabla u^{(n)}+\bar{w} ^{(n+1)}\cdot \nabla \delta u^{(n+1)}\\
= {} &\frac{n}{n+2}\bar{w} ^{(n)}\cdot \nabla u^{(n)}+\frac{2}{n+2} m ^{(n+1)}q ^{(n+1)}\cdot \nabla u^{(n)}+\bar{w} ^{(n+1)}\cdot \nabla \delta u^{(n+1)}.
\end{split}
\end{equation}
We can then obtain, using \eqref{q update 2}, \eqref{mL convexity} and \eqref{mqDu},
 \begin{equation*}
 \begin{split}
 &\frac{2}{n+2} m ^{(n+1)} H(x,\nabla u^{(n)})\\
= {}&\frac{2}{n+2} m ^{(n+1)}q ^{(n+1)}\cdot \nabla u^{(n)}-\frac{2}{n+2} m ^{(n+1)}L(x, \frac{ w^{(n+1)}}{m ^{(n+1)}})\\
\leq  {} &\bar{m} ^{(n+1)} \hat{q}^{(n+1)} \cdot \nabla u^{(n+1)}-\frac{n}{n+2}\bar{w} ^{(n)}\cdot \nabla u^{(n)}-\bar{w} ^{(n+1)}\cdot \nabla \delta u^{(n+1)}\\
 {} &+\frac{n}{n+2} \bar{m} ^{(n)}L(x,\frac{\bar w^{(n)}}{\bar{m} ^{(n)}})-\bar{m} ^{(n+1)}L(x,\frac{\bar w^{(n+1)}}{\bar{m} ^{(n+1)}})\\
 \leq  {} & \bar{m} ^{(n+1)} \hat{q}^{(n+1)} \cdot \nabla u^{(n+1)}+\frac{2}{n+2} m ^{(n+1)}\hat q ^{(n)}\cdot \nabla u^{(n)}
 -\bar{m} ^{(n+1)}\hat{q}^{(n)}\cdot \nabla u^{(n)}\\
 {}&-\bar{w} ^{(n+1)}\cdot \nabla \delta u^{(n+1)}\\
 {}&+\bar{m} ^{(n+1)}L(x,\hat q ^{(n)})-\frac{2}{n+2} m ^{(n+1)}L(x, \hat q ^{(n)})-\bar{m} ^{(n+1)}L(x,\hat q ^{(n+1)}),
\end{split}
\end{equation*}
hence
\begin{equation}\label{H-q-L}
 \begin{split}
 &\frac{2}{n+2} m ^{(n+1)} H(x,\nabla u^{(n)})-\frac{2}{n+2} m ^{(n+1)}\big(\hat q ^{(n)}\cdot \nabla u^{(n)}- L(x, \hat q ^{(n)})\big)\\
\leq  {} & \bar{m} ^{(n+1)}\big(\hat q ^{(n+1)}\cdot \nabla u^{(n+1)}-L(x,\hat q ^{(n+1)})-\hat q ^{(n)}\cdot \nabla u^{(n)}+L(x,\hat q ^{(n)})\big)\\
&-\bar{w} ^{(n+1)}\cdot \nabla \delta u^{(n+1)}\\
 \leq  {} & \bar{m} ^{(n+1)}\partial_t \delta u^{(n+1)}+\sigma \bar{m} ^{(n+1)}\Delta \delta u^{(n+1)}-\bar{w} ^{(n+1)} \cdot \nabla \delta u^{(n+1)}\\
&+\bar{m} ^{(n+1)}\big(f[\bar{m}^{(n+1)}](x)-f[\bar{m}^{(n)}](x)\big).\\
\end{split}
\end{equation}
From Cauchy$-$Schwartz inequality and \cref{H-L}, we have
\begin{equation*}
\begin{split}
& \frac{1}{\bar{C}n} m ^{(n+1)}|H_p(x,\nabla u^{(n)})-\hat{q}^{(n)}|^2\\
\leq  {} & \frac{2}{2\bar{C}(n+2)} m ^{(n+1)}|H_p(x,\nabla u^{(n)})-\hat{q}^{(n)}|^2+\frac{C}{n^2}\\
\leq  {} &\frac{2}{n+2} m ^{(n+1)} H(x,\nabla u^{(n)})-\frac{2}{n+2} m ^{(n+1)}\big(\hat{q}^{(n)} \cdot \nabla u^{(n)}-L(x,\hat{q}^{(n)})\big)+\frac{C}{n^2},
\end{split}
\end{equation*}
then we obtain by using \eqref{H-q-L}: 
\begin{equation}\label{Algo2 integrate}
\begin{split}
& \frac{1}{\bar{C}n} m ^{(n+1)}|H_p(x,\nabla u^{(n)})-\hat{q}^{(n)}|^2\\
\leq  {} & \bar{m} ^{(n+1)}\partial_t \delta u^{(n+1)}+\sigma \bar{m} ^{(n+1)}\Delta \delta u^{(n+1)}-\bar{w} ^{(n+1)} \cdot \nabla \delta u^{(n+1)}\\
 {} &+\bar{m} ^{(n+1)}f[\bar{m}^{(n+1)}](x)-\bar{m} ^{(n)}f[\bar{m}^{(n)}](x)+f[\bar{m}^{(n)}](x)(\bar{m} ^{(n)}-\bar{m} ^{(n+1)})+\frac{C}{n^2}.
\end{split}
\end{equation}
Integrating both sides of \eqref{Algo2 integrate} on $Q$. From integration by parts, \eqref{F Lip} and \eqref{G Lip},
\begin{equation}
\begin{split}
&\frac{1}{\bar{C}}\int_Q \frac{1}{n}m ^{(n+1)}|H_p(x,\nabla u^{(n)})-\hat{q}^{(n)}|^2dxdt\\
\leq  {} &\int_0^T\left(F[\bar{m}^{(n)}(t)]-F[\bar{m}^{(n+1)}(t)]\right)dt+G[\bar{m}^{(n)}(T)]-G[\bar{m}^{(n+1)}(T)]+\frac{C}{n^2}\\
&+\int_Q\left(\bar{m} ^{(n+1)}f[\bar{m}^{(n+1)}](x)-\bar{m} ^{(n)}f[\bar{m}^{(n)}](x)\right)dxdt-\int_{\T^d}m_0(x)\delta u^{(n+1)}(0,x)dx\\
&+\int_{\T^d}\left(\bar{m}^{(n+1)}(T)g[\bar m^{(n+1)}(T)](x)-\bar{m}^{(n)}(T)g[\bar m^{(n)}(T)](x)\right)dx.
\end{split}
\end{equation}
Let
\begin{equation}\label{a_n 2}
a_n=\int_Q m ^{(n+1)}|H_p(x,\nabla u^{(n)})-\hat{q}^{(n)}|^2dxdt,
\end{equation}
by telescoping we can see that $\sum_{n=1}^{\infty} a_n / n < +\infty$. One can obtain from Lemma \ref{Solution u m2} and Lemma \ref{C/n bound algo2} $\abs a_n - a_{n+1} \abs < C/n$. Then we obtain $a_n\rightarrow 0$ from Lemma \ref{a_n} and $\{H_p(x,\nabla u^{(n)})- \hat q^{(n)}\}$ converges to $0$ uniformly on $Q$.\par
From \cref{Solution u m2}, $\{(\bar m^{(n)},\bar w^{(n)})\}$ is pre-compact for uniform convergence.
We can extract a uniformly convergent subsequence $\{(\bar m^{(n_i)},\bar w^{(n_i)})\}$ with the limit denoted by $\{(\bar m,\bar w)\}$, and hence the subsequence $\{\hat q^{(n_i)}\}$ converges uniformly to $\frac{\bar w}{\bar m}$. We use Proposition \ref{linear estim Sobolev}, ($A3$), Proposition \ref{m stability} and Remark \ref{H embedding}, pass to the limit in \eqref{m update 2} and \eqref{u update 2} to obtain
\begin{align*}
&-\partial_t u - \sigma \Delta u  +\frac{\bar w}{\bar m}\cdot \nabla u-L(x,\frac{\bar w}{\bar m}) =f[\bar m](x),\,\,u(T,x)=g[\bar m(T)](x),\\
&\partial_t \bar m- \sigma \Delta \bar m  - \text{div} ( \bar w) =0,\,\,\bar m(0,x)=m_0(x),
\end{align*}
where $(u,\bar m)\in \mathcal C^{1+\alpha/2,2+\alpha}(Q)\times \mathcal C^{1+\alpha/2,2+\alpha}(Q)$ denotes the limit of $\{(u^{(n_i)},\bar m^{(n_i)})\}$. By using ($A1$), we can obtain that $\{H_p(x,\nabla u^{(n_i)})\}$ converges to $H_p(x,\nabla u)$ unifromly. From Proposition \ref{m stability} and Remark \ref{H embedding}, $\{m^{(n_i)}\}$ converges uniformly to $m$, which is the solution to:
$$\partial_t m- \sigma \Delta  m  - \text{div} (H_p(x,\nabla u)m) =0,\,\,m(0,x)=m_0(x).$$
Since  $\{H_p(x,\nabla u^{(n_i)})- \hat q^{(n_i)}\}$ converges to $0$ uniformly, we can obtain in the limit $H_p(x,\nabla u)=\frac{\bar w}{\bar m}$ and $m=\bar m$, hence $(u,m)$ is a classical solution to system \eqref{MFG}.\par
If $(A4)$ holds, then the system \eqref{MFG} has a unique solution, so that the whole sequence $\{(u^{(n)},m^{(n)})\}$ has a unique cluster point $(u,m)$ and thus converges uniformly to $(u,m)$. 
 \end{proof}

   \subsection{Convergence to stable solutions}\label{Stable solution}
  We show \textbf{SPI2} converges to a stable solution $(u,m)$, locally in the sense that if the initial guess $q^{(0)}$ is sufficiently close to the optimal control $H_p(x,\nabla u)$. The proof for algorithm  \textbf{SPI1} is similar hence omitted. 
  \begin{theorem}
 Let $(u,m)$ be a stable solution to the MFG system \eqref{MFG}. Then there exists a $\delta>0$ such that, if $\|q^{(0)}- q\|_{L^\infty(Q;\R^d)}\leq \delta$, where $q=H_p(x,\nabla u)$, the sequence $\{(u^{(n)},m^{(n)})\}$ defined in algorithm \textbf{SPI2} converges to $(u,m)$ in $\mathcal C^{0,1}(Q)\times \mathcal C^0(Q)$ as $n\rightarrow \infty$.
 \end{theorem}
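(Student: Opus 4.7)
The plan is to adapt the local-convergence strategy for Fictitious Play in \cite[Proposition 5.3]{bc} to the \textbf{SPI2} iteration. Define $V^{(n)}:=u^{(n)}-u$, $\rho^{(n)}:=\bar m^{(n)}-m$, $\theta^{(n)}:=\hat q^{(n)}-q$ with $q=H_p(x,\nabla u)$, and introduce the consistency gap $\gamma^{(n)}:=\hat q^{(n)}-H_p(x,\nabla u^{(n)})=\hat q^{(n)}-q^{(n+1)}$. Since \eqref{m update 2} is linear in $m$ and the recursion \eqref{m bar update 2} is a convex combination, the smoothed distribution itself solves $\partial_t\bar m^{(n)}-\sigma\Delta\bar m^{(n)}-\mathrm{div}(\bar m^{(n)}\hat q^{(n)})=0$ with $\bar m^{(n)}(0)=m_0$. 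Subtracting the MFG equations \eqref{MFG} and Taylor-expanding $H$, $L$, $f$, $g$ about $(u,m)$ using $(A2)$, $(A3)$, \cref{F f Lip} and \cref{H-L}, the pair $(V^{(n)},\rho^{(n)})$ satisfies a system of the form \eqref{Linerized} with sources
\[
a^{(n)}=O\bigl(\|\nabla V^{(n)}\|_\infty^2+\|\gamma^{(n)}\|_\infty^2\bigr),\quad b^{(n)}=m\gamma^{(n)}+O\bigl(\|V^{(n)}\|_{\mathcal C^{0,1}}^2+\|\rho^{(n)}\|_\infty\|\theta^{(n)}\|_\infty\bigr),
\]
and $c^{(n)}=O(\|\rho^{(n)}(T)\|_\infty^2)$. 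Stability of $(u,m)$ together with \cref{stable sol lemma} then yields $\|V^{(n)}\|_{\mathcal C^{0,1}(Q)}+\|\rho^{(n)}\|_{\mathcal C^0(Q)}\leq C\bigl(\|a^{(n)}\|_{\mathcal C^0}+\|b^{(n)}\|_{\mathcal C^0}+\|c^{(n)}\|_{\mathcal C^0}\bigr)$.

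To close the loop one needs a separate control on $\gamma^{(n)}$. A direct manipulation of \eqref{m bar update 2}, using $\hat q^{(k)}=\bar w^{(k)}/\bar m^{(k)}$ and $w^{(k)}=q^{(k)}m^{(k)}$, produces the algebraic identity
\[
\hat q^{(n+1)}-\hat q^{(n)}=-\frac{2\, m^{(n+1)}}{(n+2)\bar m^{(n+1)}}\,\gamma^{(n)},
\]
so $\gamma^{(n)}$ is of the same order as $n$ times the increment of $\hat q^{(n)}$. Combining the decomposition $\gamma^{(n)}=\theta^{(n)}-(q^{(n+1)}-q)$ with the Lipschitz estimate $\|q^{(n+1)}-q\|_\infty\leq \bar C\|\nabla V^{(n)}\|_\infty$ from $(A2)$, a continuous induction will show that, provided $\|q^{(0)}-q\|_\infty\leq\delta$ with $\delta$ small in terms of the stability constant $C$ of \cref{stable sol lemma}, $\bar C$, and the bounds from \cref{Solution u m2}, the quantities $\|\theta^{(n)}\|_\infty$, $\|V^{(n)}\|_{\mathcal C^{0,1}}$ and $\|\rho^{(n)}\|_{\mathcal C^0}$ remain below a prescribed threshold $\eta>0$ for every $n$. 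Local isolation of the stable solution (\cite[Proposition 4.2]{bc}) ensures that $(u,m)$ remains the unique minimizer of $J_{t_0}$ within this $\eta$-ball.

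Once uniform smallness is secured, convergence to zero is extracted by rerunning the Lyapunov computation in the proof of \cref{Maintheorem2}: the telescoping argument preceding \eqref{a_n 2} applies verbatim and produces $\sum_n a_n/n<\infty$ for $a_n=\int_Q m^{(n+1)}|H_p(x,\nabla u^{(n)})-\hat q^{(n)}|^2\,dxdt$; \cref{C/n bound algo2} gives $|a_n-a_{n+1}|<C/n$, and \cref{a_n} then forces $a_n\to 0$. The uniform $\mathcal C^{\alpha/2,\alpha}$ bounds from \cref{Solution u m2} upgrade this to $\|\gamma^{(n)}\|_\infty\to 0$, and a final pass through \cref{stable sol lemma} gives $\|V^{(n)}\|_{\mathcal C^{0,1}}+\|\rho^{(n)}\|_{\mathcal C^0}\to 0$, from which $(u^{(n)},m^{(n)})\to(u,m)$ in $\mathcal C^{0,1}(Q)\times\mathcal C^0(Q)$ follows by passing to the limit in \eqref{m update 2}. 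The principal obstacle is the bootstrap step: the consistency gap $\gamma^{(n)}$ enters $b^{(n)}$ at first order as $m\gamma^{(n)}$ and, after writing $\gamma^{(n)}\approx\theta^{(n)}-H_{pp}\nabla V^{(n)}$, it produces a linear feedback $\bar C\|\nabla V^{(n)}\|_\infty$ whose product with $C$ need not be strictly less than $1$. Balancing $\delta$, $\eta$ and these constants so that the quadratic remainders dominate the linear feedback is the delicate technical point, perfectly analogous to the corresponding step in \cite{bc} for \textbf{FP}.
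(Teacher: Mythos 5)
Your overall architecture matches the paper's: linearize around the stable solution, put $(u^{(n)}-u,\,\cdot\,)$ into the framework of \eqref{Linerized}, estimate the sources $a,b,c$ by quadratic remainders plus the consistency gap $\gamma^{(n)}=\hat q^{(n)}-H_p(x,\nabla u^{(n)})$ plus $O(1/n)$, apply \cref{stable sol lemma}, and close an induction keeping the iterates in a small ball. Your algebraic identity $\hat q^{(n+1)}-\hat q^{(n)}=-\tfrac{2m^{(n+1)}}{(n+2)\bar m^{(n+1)}}\gamma^{(n)}$ is correct and is a clean way to see the structure (it is essentially the content of \cref{C/n bound algo2}). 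However, there is a genuine gap exactly where you flag "the delicate technical point," and it is not merely delicate --- as you have set it up, it does not close. Since $\gamma^{(n)}$ enters the sources at first order (note also that \cref{H-L} bounds $H+L-\langle p,q\rangle$ only by the \emph{first} power of $|q-H_p(x,p)|$, so your claim $a^{(n)}=O(\|\gamma^{(n)}\|^2)$ is not justified either), substituting $\gamma^{(n)}\approx\theta^{(n)}-H_{pp}\nabla V^{(n)}$ produces a linear feedback $C\bar C\|\nabla V^{(n)}\|_\infty$ with no reason to have coefficient less than one, and no choice of $\delta$ or $\eta$ repairs a linear recursion with coefficient exceeding one.

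The paper escapes this by reversing the order of your last two steps. The Lyapunov/telescoping computation of \cref{Maintheorem2} is \emph{global}: it yields $\|\hat q^{(n)}-H_p(\cdot,\nabla u^{(n)})\|_{\mathcal C^0(Q;\R^d)}\to 0$ under (A1)--(A3) alone, with no smallness or monotonicity hypothesis, so it is available \emph{before} any induction. The paper therefore picks $N_0$ so large that $\hat C\|\gamma^{(n)}\|_\infty<\hat\eta/4$ and $\hat C/n<\hat\eta/4$ for $n\geq N_0$; the recursion then reads $\|\nabla u^{(n+1)}-\nabla u\|\leq\hat C\|\nabla u^{(n)}-\nabla u\|^2+\hat\eta/2$ with \emph{no} linear feedback term, and closes with $\hat C\hat\eta<1/4$. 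The hypothesis $\|q^{(0)}-q\|_\infty\leq\delta$ is used only to control the finitely many iterates $n<N_0$, via a crude one-step amplification bound $\hat\eta_{n+1}\leq C_\eta\hat\eta_n$ and the choice $\delta=C_\eta^{-N_0}\eta$. Your plan to "rerun the Lyapunov computation" only \emph{after} securing uniform smallness is thus both unnecessary and the reason your bootstrap cannot be completed; invoking \cref{Maintheorem2} first removes the obstacle entirely. (Your appeal to local isolation of the stable solution is also not needed.)
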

 \begin{proof}  
 Let $v=u^{(n+1)}-u$, $\rho=m^{(n+1)}-m$, $(v,\rho)$ be the solution to system \eqref{Linerized},
\begin{equation*}
\begin{split}
a(t,x):={}&-H(x,\nabla u^{(n+1)})+H(x,\nabla u)+H_p(x,\nabla u)(\nabla u^{(n+1)}-\nabla u)\\
{}&-\hat q^{(n+1)}\cdot \nabla u^{(n+1)}+L(x,\hat q^{(n+1)})+H(x,\nabla u^{(n+1)})\\
&+f[\bar{m}^{(n+1)}(t)](x)-f[m(t)](x)-\frac{\delta f}{\delta m}[m(t)](x)(m^{(n+1)}-m),
\end{split}
\end{equation*}
\begin{equation*}
\begin{split}
b(t,x):=&m^{(n+1)}H_p(x,\nabla u^{(n)})-mH_p(x,\nabla u)-(m^{(n+1)}-m)H_p(x,\nabla u)\\
&-mH_{pp}(x,\nabla u)(\nabla u^{(n)}-\nabla u) +mH_{pp}(x,\nabla u)(\nabla u^{(n)}-\nabla u^{(n+1)}),
\end{split}
\end{equation*}
$$
c(x):=g[\bar{m}^{(n+1)}(T)](x)-g[m(T)](x)-\frac{\delta g}{\delta m}[m(T)](x)\big(m^{(n+1)}(T)-m(T)\big).
$$
Recall $\hat q^{(n)}$ and $\bar w^{(n)}$ are bounded in $\mathcal C^0(Q;\R^d)$ for $n\geq 1$. From $(A2)$, \cref{Solution u m2}, \cref{C/n bound algo2} and Cauchy Schwartz inequality we can obtain:
\begin{equation*}
\begin{split}
&\|H(\cdot,\nabla u^{(n+1)})-H(\cdot,\nabla u)-H_p(\cdot,\nabla u)(\nabla u^{(n+1)}-\nabla u)\|_{\mathcal C^0(Q)}\\
\leq {}&C\|\nabla u^{(n+1)}-\nabla u\|^2_{\mathcal C^0(Q)}\\
\leq {}&C\|\nabla u^{(n)}-\nabla u\|^2_{\mathcal C^0(Q)}+\frac{C}{n^2},
\end{split}
\end{equation*}
\begin{equation*}
\begin{split}
&\|m^{(n+1)}H_p(x,\nabla u^{(n)})-mH_p(x,\nabla u)-(m^{(n+1)}-m)H_p(x,\nabla u)\\
&-mH_{pp}(x,\nabla u)(\nabla u^{(n)}-\nabla u) \|_{\mathcal C^0(Q;\R^d)}\\
\leq {}&C\big(\|\nabla u^{(n)}-\nabla u)\|^2_{\mathcal C^0(Q;\R^d)}+\|m^{(n+1)}-m\|^2_{\mathcal C^0(Q)}\big),
\end{split}
\end{equation*}
$$
\|mH_{pp}(\cdot,\nabla u)(\nabla u^{(n)}-\nabla u^{(n+1)})\|_{\mathcal C^0(Q)}\leq C\|\nabla u^{(n)}-\nabla u^{(n+1)}\|_{\mathcal C^0(Q)}\leq \frac{C}{n}.
$$
It follows from \cref{H-L} and \cref{C/n bound algo2} that
\begin{equation*}
\begin{split}
&\|H(\cdot,\nabla u^{(n+1)})+L(\cdot,\hat q^{(n+1)})-\hat q^{(n+1)}\cdot \nabla u^{(n+1)}\|_{\mathcal C^0(Q)}\\
\leq {}&C\|\hat q^{(n+1)}-H_p(\cdot,\nabla u^{(n+1)})\|_{\mathcal C^0(Q;\R^d)}\\
\leq {}&C\|\hat q^{(n)}-H_p(\cdot,\nabla u^{(n)})\|_{\mathcal C^0(Q;\R^d)}+C\|\hat q^{(n+1)}-\hat q^{(n)}\|_{\mathcal C^0(Q;\R^d)}\\
&+C\|H_p(\cdot,\nabla u^{(n+1)})-H_p(\cdot,\nabla u^{(n)})\|_{\mathcal C^0(Q;\R^d)}\\
\leq {}&C\|\hat q^{(n)}-H_p(\cdot,\nabla u^{(n)})\|_{\mathcal C^0(Q)}+\frac{C}{n}.
\end{split}
\end{equation*}

Moreover, for each $t\in [0,T]$, $\bar{m}^{(n)}(t),m^{(n)}(t)\in \mes(\T^d)\cap \mathcal C^0(\T^d)$. From \cref{F f Lip},
$$
\|f(\cdot,\bar{m}^{(n+1)})-f[m^{(n+1)}](\cdot)\|_{\mathcal C^0(Q)}\leq C\|\bar{m}^{(n+1)}-m^{(n+1)}\|_{\mathcal C^0(Q)},
$$
$$
\|f[m^{(n+1)}](\cdot)-f[m](\cdot)-\frac{\delta f}{\delta m}[m](\cdot)(m^{(n+1)}-m)\|_{\mathcal C^0(Q)}\leq C\|m^{(n+1)}-m\|^2_{\mathcal C^0(Q)},
$$
\begin{equation*}
\begin{split}
&\|g[\bar{m}^{(n+1)}(T)](\cdot)-g[m(T)](\cdot)-\frac{\delta g}{\delta m}[m(T)](\cdot)(m^{(n+1)}(T)-m(T))\|_{\mathcal C^0(\T^d)}\\
\leq{}&C\|\bar{m}^{(n+1)}(T)-m^{(n+1)}(T)\|_{\mathcal C^0(\T^d)}+C\|m^{(n+1)}(T)-m(T)\|^2_{\mathcal C^0(\T^d)}\\
\leq{}&C\|\bar{m}^{(n+1)}-m^{(n+1)}\|_{\mathcal C^0(Q)}+C\|m^{(n+1)}-m\|^2_{\mathcal C^0(Q)}.
\end{split}
\end{equation*}
From Proposition \ref{m stability} and Lemma \ref{Solution u m2}:
$$
\|m^{(n+1)}-m\|^2_{\mathcal C^0(Q)}\leq C\|H_p(\cdot,\nabla u^{(n)})-H_p(\cdot,\nabla u)\|^2_{\mathcal C^0(Q;\R^d)}\leq C\|\nabla u^{(n)}-\nabla u\|^2_{\mathcal C^0(Q;\R^d)},
$$
\begin{equation*}
\begin{split}
\|m^{(n+1)}-\bar{m}^{(n+1)}\|_{\mathcal C^0(Q)}\leq {}&C\|\hat q^{(n+1)}-H_p(\cdot,\nabla u^{(n)})\|_{\mathcal C^0(Q;\R^d)}\\
\leq {}&C\|\hat q^{(n)}-H_p(\cdot,\nabla u^{(n)})\|_{\mathcal C^0(Q;\R^d)}+\frac{C}{n}.
\end{split}
\end{equation*}
With these estimates in order, we can obtain by \cref{stable sol lemma},
\begin{equation}
\begin{split}
&\|u^{(n+1)}-u\|_{\mathcal C^{0,1}(Q)}+\|m^{(n+1)}-m\|_{\mathcal C^0(Q)}\\
\leq {}&C(\|a\|_{\mathcal C^0(Q)}+\|b\|_{\mathcal C^0(Q;\R^d)}+\|c\|_{\mathcal C^0(Q)})\\
\leq {}&C \big(\|\nabla u^{(n)}-\nabla u\|^2_{\mathcal C^0(Q;\R^d)}+\|\hat q^{(n)}-H_p(\cdot,\nabla u^{(n)})\|_{\mathcal C^0(Q;\R^d)}\big)+\frac{C}{n}.
\end{split}
\end{equation}
Hence there exists $\hat C$ depending only on the data of the problem and fixed throughout the proof,
\begin{equation}\label{2hat C bound}
\begin{split}
&\|\nabla u^{(n+1)}-\nabla u\|_{\mathcal C^0(Q;\R^d)}\\
\leq {}& \hat{C}\big(\|\nabla u^{(n)}-\nabla u\|^2_{\mathcal C^0(Q;\R^d)}+\|\hat q^{(n)}-H_p(\cdot,\nabla u^{(n)})\|_{\mathcal C^0(Q;\R^d)}\big)+\frac{\hat{C}}{n}.
\end{split}
\end{equation}
One can always find a sufficiently small positive constant $\hat \eta$ such that $\hat{C}\hat \eta<1/4$, hence
\begin{equation}\label{eta iteration1}
\hat{C}\hat \eta^2+\frac{\hat \eta}{2}<\hat \eta.
\end{equation}
In \cref{Maintheorem2} we have obtained $\|\hat q^{(n)}-H_p(x,\nabla u^{(n)})\|_{\mathcal C^0(Q;\R^d)}\rightarrow 0$, hence there exists a sufficiently large $N_0$, such that $\forall n\geq N_0$,
\begin{equation}\label{eta iteration2}
\hat{C}\|\hat q^{(n)}-H_p(\cdot,\nabla u^{(n)})\|_{\mathcal C^0(Q;\R^d)}<\frac{\hat \eta}{4}\,\,\text{and}\,\,\frac{\hat{C}}{n}<\frac{\hat \eta}{4}.
\end{equation}
For $\forall n\geq N_0$, if $\|\nabla u^{(n)}-\nabla u\|_{\mathcal C^0(Q)}\leq \hat \eta$, then \eqref{2hat C bound}, \eqref{eta iteration1} and \eqref{eta iteration2} together imply that 
$
\|\nabla u^{(n+1)}-\nabla u\|_{\mathcal C^0(Q;\R^d)}\leq \hat \eta.
$
We can obtain by induction that if
\begin{equation*}
\|\nabla u^{(N_0)}-\nabla u\|_{\mathcal C^0(Q;\R^d)}\leq \hat \eta,\,\,
\text{then}\,\|\nabla u^{(n)}-\nabla u\|_{\mathcal C^0(Q;\R^d)}\leq \hat \eta,\,\,\forall n\geq N_0+1.
\end{equation*}
 Next we can argue inductively that for a given $N_0$, $\|\nabla u^{(N_0)}-\nabla u\|_{\mathcal C^0(Q;\R^d)}\leq \hat \eta$ holds with a sufficiently close $q^{(0)}$. For each $n$, $0\leq n<N_0+1$, suppose there exists an $\eta_n>0$ such that
$
\|\hat q^{(n)}-H_p(\cdot,\nabla u)\|_{L^\infty(Q;\R^d)}\leq \hat \eta_n,
$
then again by systematically using $(A2)$, Lemma \ref{Solution u m2}, Proposition \ref{m stability} and Proposition \ref{linear estim Sobolev}, we have
$$
\|\bar{m}^{(n)}-m\|_{\mathcal C^0(Q)}\leq C\|\hat q^{(n)}-H_p(\cdot,\nabla u)\|_{L^\infty(Q;\R^d)}\leq C\hat \eta_n,
$$
\begin{equation*}
\begin{split}
\|u^{(n)}-u\|_{W^{1,2}_r(Q)}\leq {}&C\big(\|\hat q^{(n)}-H_p(\cdot,\nabla u)\|_{L^\infty(Q;\R^d)}+\|f(\cdot,\bar{m}^{(n)})-f[m](\cdot)\|_{\mathcal C^0(Q)}\\
&+\|g[\bar{m}^{(n)}(T)](\cdot)-g[m^{(n)}(T)](\cdot)\|_{\mathcal C^2(\T^d)}\big)\\
\leq {}&C\hat \eta_n,
\end{split}
\end{equation*}
\begin{equation*}
\begin{split}
\|H_p(\cdot,\nabla u^{(n)})-H_p(\cdot,\nabla u)\|_{\mathcal C^0(Q;\R^d)}&\leq C\|\nabla u^{(n)}-\nabla u\|_{\mathcal C^0(Q;\R^d)}\\
&\leq C\|u^{(n)}-u\|_{W^{1,2}_r(Q)} \leq C\hat \eta_{n},
\end{split}
\end{equation*}
\begin{equation*}
\begin{split}
\| w-\bar w^{(n)}\|_{\mathcal C^0(Q)}&=\| mH_p(\cdot,\nabla u)-\bar m^{(n)}\hat q^{(n)}\|_{L^\infty(Q;\R^d)}\\
&\leq C\big(\|\hat q^{(n)}-H_p(\cdot,\nabla u)\|_{L^\infty(Q;\R^d)}+\|\bar{m}^{(n)}-m\|_{\mathcal C^0(Q)}\big)\leq C\hat \eta_{n},
\end{split}
\end{equation*}
$$
\|m^{(n+1)}-m\|_{\mathcal C^0(Q)}\leq C\|H_p(\cdot,\nabla u^{(n)})-H_p(\cdot,\nabla u)\|_{\mathcal C^0(Q;\R^d)}\leq C\hat \eta_{n},
$$
\begin{equation*}
\begin{split}
&\| w- w^{(n+1)}\|_{\mathcal C^0(Q;\R^d)}\\
\leq {}&C\big( \|m^{(n+1)}-m\|_{\mathcal C^0(Q)}+ \|H_p(\cdot,\nabla u^{(n)})-H_p(\cdot,\nabla u)\|_{\mathcal C^0(Q;\R^d)} \big)\leq C\hat \eta_{n}.
\end{split}
\end{equation*}
Moreover, from \eqref{m bar update 2} and \cref{Solution u m2}, it is clear $\forall n\geq 0$,
$$
\| \bar w^{(n+1)}-\bar w^{(n)}\|_{L^\infty(Q;\R^d)}\leq \| w^{(n+1)}-\bar w^{(n)}\|_{L^\infty(Q;\R^d)},
$$
$$
 \|\bar m^{(n+1)}-\bar m^{(n)}\|_{\mathcal C^0(Q)}\leq \|m^{(n+1)}-\bar m^{(n)}\|_{\mathcal C^0(Q)}.
$$
We obtain from previous estimates and \cref{Solution u m2}, there exists $C_\eta>0$ such that
\begin{equation*}
\begin{split}
&\|\hat q^{(n+1)}- H_p(\cdot,\nabla u)\|_{\mathcal C^0(Q;\R^d)}\\
\leq {}&\|\hat q^{(n)}- H_p(\cdot,\nabla u)\|_{L^\infty(Q;\R^d)}+\|\hat q^{(n+1)}-\hat q^{(n)}\|_{L^\infty(Q;\R^d)}\\
\leq {}&\hat \eta_n+\|\frac{(\bar w^{(n+1)}-\bar w^{(n)})\bar m^{(n)}-\bar w^{(n)}(\bar m^{(n+1)}-\bar m^{(n)})}{\bar m^{(n+1)}\bar m^{(n)}}\|_{L^\infty(Q;\R^d)}\\
\leq {}& \hat \eta_n+C\big(\| w^{(n+1)}-\bar w^{(n)}\|_{L^\infty(Q;\R^d)}+\|m^{(n+1)}-\bar m^{(n)}\|_{\mathcal C^0(Q)}\big)\\
\leq {}&\hat \eta_n+C\big(\| w-\bar w^{(n)}\|_{L^\infty(Q;\R^d)}+\| w- w^{(n+1)}\|_{\mathcal C^0(Q;\R^d)}+\|m^{(n+1)}-m\|_{\mathcal C^0(Q)}\\
&+\|\bar m^{(n)}-m\|_{\mathcal C^0(Q)}\big)\\
\leq {}&C_\eta \hat \eta_n.
\end{split}
\end{equation*} 
Hence, if $\|q^{(0)}- q\|_{L^\infty(Q;\R^d)}\leq C_\eta^{-N_0}\eta$ then $\|\nabla u^{(N_0)}-\nabla u\|_{\mathcal C^0(Q;\R^d)}\leq \hat \eta$. \par
Finally, we consider again $n\geq N_0+1$, hence $\|\nabla u^{(n)}-\nabla u\|_{\mathcal C^0(Q;\R^d)}<\hat \eta$. From Cauchy Schwartz inequality
$
\|\nabla u^{(n)}-\nabla u\|^2_{\mathcal C^0(Q;\R^d)}\leq 2\|\nabla u^{(n+1)}-\nabla u\|^2_{\mathcal C^0(Q;\R^d)}+\frac{C}{n^2},
$
together with \eqref{2hat C bound} we have 
\begin{equation}\label{un+1-u^2}
\begin{split}
&\|\nabla u^{(n+1)}-\nabla u\|_{\mathcal C^0(Q;\R^d)}\\
\leq {}& 2\hat{C}\|\nabla u^{(n+1)}-\nabla u\|^2_{\mathcal C^0(Q;\R^d)}+\frac{C}{n^2}+\hat{C}\|\hat q^{(n)}-H_p(\cdot,\nabla u^{(n)})\|_{\mathcal C^0(Q;\R^d)}+\frac{\hat{C}}{n}.
\end{split}
\end{equation}
Since $1-2\hat C\hat \eta\geq 1/2$ then $2\hat{C}\|\nabla u^{(n)}-\nabla u\|^2_{\mathcal C^0(Q;\R^d)}\leq \frac{1}{2}\|\nabla u^{(n)}-\nabla u\|_{\mathcal C^0(Q;\R^d)}$,
 $$
\|\nabla u^{(n)}-\nabla u\|_{\mathcal C^0(Q;\R^d)}\leq  2\hat{C}\|\hat q^{(n-1)}-H_p(\cdot,\nabla u^{(n-1)})\|_{\mathcal C^0(Q;\R^d)}+\frac{2C}{(n-1)^2}+\frac{2\hat{C}}{n-1}.
$$
Let $n\rightarrow \infty$, then $\|\nabla u^{(n)}-\nabla u\|_{\mathcal C^0(Q;\R^d)}\rightarrow 0$. Moreover from $(A2)$, $H_p(x,\nabla u^{(n)})$ converges uniformly to $H_p(x,\nabla u)$, from Proposition \ref{m stability} and Remark \ref{H embedding} we have $\{m^{(n)}\}$ converges to $m$ in $\mathcal C^0(Q)$. From Proposition \ref{linear estim Sobolev} and \eqref{embedding}, $\{u^{(n)}\}$ convergences to $u$ in $\mathcal C^{0,1}(Q)$.
\end{proof}

\section{Numerical simulations}\label{Numerical}
\subsection{Numerical method}
We implement the \textbf{SPI} algorithms with finite difference schemes. They generalize the schemes developed in \cite{ccg,lst}. To save notations we only discuss the method on some particular one-dimensional cases, which can be generalized e.g. to dimension 2. An interesting possible extension will be to numerical analysis of mean field games of controls \cite{ak}, where agents are coupled via nonlocal interactions between their controls. Following \cref{Bernstein}, the bound $R$ for $q$ comes from theoretical analysis and is in general not trackable. In the implementation of the scheme we choose $R=10000$.\par
 \noindent \textbf{Discretization}. We discrete the problem on $Q=[0,T]\times [-1,1]$ with grid $\mathcal{G}$. $h=2/\mathsf{I}$, $i=0,...,\mathsf{I}$. $\Delta t=T/\mathsf{T}$, $\tau=0,...,\mathsf{T}$. Set $t_\tau=\tau \Delta t$ and $x_i=ih$. The values of $u$ and $m$ at $(t_\tau,x_i)$ are approximated by $U_{\tau,i}$ and $M_{\tau,i}$, respectively. Fix $U_{\mathsf{T},i}=u(T,x_i),\,\,M_{0,i}=m(0,x_i)$ and $V_i=V(x_i)$ for all $i$. Denote by $(\cdot)^+=\max\left\{\cdot,0\right\}$ and $(\cdot)^-=\min\left\{\cdot,0\right\}$ for the positive and negative part respectively of a real number. We define the following discrete operators:
\begin{align*}
	(\Delta_\sharp U_{\tau})_i &=\frac{1}{h^2}\left(U_{\tau,[i-1]}-2U_{\tau,i}+U_{\tau,[i+1]}\right)\,,
	\\
	(D_\sharp U_{\tau})_i &=\left(D_L U_{\tau,i}\,,\,D_R U_{\tau,i}\right)=\frac{1}{h}\left( U_{\tau,i}-U_{\tau,[i-1]}\,,\,U_{\tau,[i+1]}-U_{\tau,i}\right)\,,
	\end{align*}
	\begin{equation*}
	\begin{split}
	\text{div}_\sharp(M_{\tau+1}\,D_\sharp U_{\tau})_i={}&\frac{1}{h}
	\left( M_{\tau+1,[i+1]}D_LU_{\tau,[i+1]}^+ - M_{\tau+1,i} D_LU_{\tau,i}^+ \right)\\
	&+ \frac{1}{h}\left(M_{\tau+1,i} D_RU_{\tau,i}^- -M_{\tau+1,[i-1]} D_RU_{\tau,[i-1]}^-\right),
	\end{split}
\end{equation*}
	where the index operator $[\cdot]=\left\{(\cdot +I)\,mod\, I\right\}$ accounts for the periodic boundary conditions.  The two sided discrete gradient operator $(D_\sharp U_{\tau})_i$ is commonly used in construction of numerical schemes for MFGs \cite{ad,acd}. The discrete divergence operator captures the essential structural features of the MFG system \cite{ad,acd}. This motivates, as in \cite{ccg,lst}, the following two sided discretization of policy, with $\mathsf{Q}: \mathcal{G}\rightarrow \mathbb{R}^2$:
$$
	\mathsf{Q}_{\tau,i}=\left(\mathsf{Q}_{\tau,i,L}\,,\,\mathsf{Q}_{\tau,i,R}\right),\,\,\mathsf{Q}_{\tau,i,\pm}=\left(\mathsf{Q}^+_{\tau,i,L}\,,\,\mathsf{Q}^-_{\tau,i,R}\right).
$$
We denote
$
	\vert \mathsf{Q}_{\tau,i,\pm}\vert ^2=\left( \mathsf{Q}_{\tau,i,L}^+\right)^2+\left(\mathsf{Q}_{\tau,i,R}^-\right)^2,\,\,\vert \mathsf{Q}_{\tau,i,\pm}\vert _\infty=\sup \left\{\mathsf{Q}^+_{\tau,i,L}\,,\,-\mathsf{Q}^-_{\tau,i,R}\right\}
$
and the discrete divergence operator
\begin{equation}\label{discrete div}
\begin{split}
	\text{div}_\sharp(M_{\tau+1}\,\mathsf{Q}_\tau)_i
	={}&\frac{1}{h}
	\left( M_{\tau+1,[i+1]}\mathsf{Q}_{\tau,[i+1],L}^+ - M_{\tau+1,i} \mathsf{Q}_{\tau,i,L}^+ \right)\\
	&+ \frac{1}{h}\left(
	M_{\tau+1,i} \mathsf{Q}_{\tau,i,R}^- -M_{\tau+1,[i-1]} \mathsf{Q}_{\tau,[i-1],R}^-
	\right).
	\end{split}
\end{equation}
Following \cite[(12)]{ad}, we approximate the coupling $f[m(t)](x)$ and $g[M(T)](x)$ by 
\begin{equation}\label{discretize f}
\big(f_h[M_\tau]\big)_i=f[(m_\tau)_h](x_i), \,\,\big(g_h[M_\mathsf{T}]\big)_i=g[(m_\mathsf{T})_h](x_i), 
\end{equation}
where $(m_\tau)_h$ denotes the piecewise constant function taking the value $M_{\tau,i}$ in the square $|x-x_i|\leq h/2$.\par
\noindent \textbf{Numerical method}. We now define the discrete Smoothed Policy Iteration algorithm (discrete \textbf{SPI1}). Initialize with $\mathsf{Q}^{(0)}_{\tau,i}$ such that $\mathsf{Q}^{(0)}_{\tau,i}=\bar{\mathsf{Q}}^{(0)}_{\tau,i}$, $\vert \mathsf{Q}^{(0)}_{\tau,i,\pm}\vert_\infty \leq R$, for all $\tau, i$. Iterate:
\begin{itemize}
	\item[\textbf{(i)}]  Solve for $\tau=0,\dots, \mathsf{T}-1$, $i=0,...,\mathsf{I}-1$, $M^{(n)}_{0,i}=M_{0,i}$,
	\begin{equation*}\label{M update1}
	\frac{M^{(n)}_{\tau+1,i}-M^{(n)}_{\tau,i}}{\Delta t}-\sigma(\Delta_\sharp M^{(n)}_{\tau+1})_i-\text{div}_\sharp(M_{\tau+1}\,\bar{\mathsf{Q}}^{(n)}_\tau)_i=0.
	\end{equation*}
	
	\item[\textbf{(ii)}] Solve for $\tau=0,\dots, \mathsf{T}-1$, $i=0,...,\mathsf{I}-1$, $U^{(n)}_{\mathsf{T},i}=\big(g_h[M^{(n)}_{\mathsf{T}}]\big)_i$,
	\begin{equation*}\label{U update1}
\frac{U_{\tau,i}^{(n)}-U_{\tau+1,i}^{(n)}}{\Delta t}- \sigma(\Delta_\sharp U^{(n)}_{\tau})_i+\bar{\mathsf{Q}}^{(n)}_{\tau,i,\pm}\cdot (D_\sharp U^{(n)}_{\tau})_i- \frac{1}{2}\vert \bar{\mathsf{Q}}^{(n)}_{\tau,i,\pm}\vert ^2-V_{i}=\big(f_h[M^{(n)}_{\tau+1}]\big)_i.
	 \end{equation*}
	\item[\textbf{(iii)}] For  $\tau=0,\dots, \mathsf{T}-1$, $i=0,...,\mathsf{I}$,
	\begin{equation*}
	\mathsf{Q}^{(n+1)}_{\tau,i}=\left(\min \left\{R,(D_L U^{(n)}_{\tau,i})^+\right\},\,\,\max \left\{-R,(D_R U^{(n)}_{\tau,i})^-\right\}\right).
	\end{equation*}
	 If $\|\mathsf{Q}^{(n+1)}-\mathsf{Q}^{(n)}\|_{l^\infty}$ is small enough, stop. Otherwise continue.	
	\item[\textbf{(iv)}] For $\tau=0,\dots, \mathsf{T}-1$, $i=0,...,\mathsf{I}$,
	\begin{equation}
	\bar{\mathsf{Q}}_{\tau,i,\pm}^{(n+1)}=(1-\frac{2}{n+2})\bar{\mathsf{Q}}_{\tau,i,\pm}^{(n)}+\frac{2}{n+2}\mathsf{Q}_{\tau,i,\pm}^{(n)}.
	\end{equation}
	Set $n\leftarrow n+1$ and continue.
\end{itemize}
In particular, we set 
\begin{equation}\label{sup q-q}
\|\mathsf{Q}^{(n+1)}-\mathsf{Q}^{(n)}\|_{l^\infty} =\max_{\tau,i}\max \left\{\vert (\mathsf{Q}^{(n+1)}_{\tau,i,L})^+-(\mathsf{Q}^{(n)}_{\tau,i,L})^+\vert,\vert (\mathsf{Q}^{(n+1)}_{\tau,i,R})^--(\mathsf{Q}^{(n)}_{\tau,i,R})^-\vert \right\}.
\end{equation}
We now define the discrete Smoothed Policy Iteration algorithm (discrete \textbf{SPI2}). Initialize with $\mathsf{Q}^{(0)}_{\tau,i}$ such that $\vert \mathsf{Q}^{(0)}_{\tau,i,\pm}\vert \leq R$, for all $\tau, i$. Iterate:
\begin{itemize}
	\item[\textbf{(i)}]  Solve for $\tau=0,\dots, \mathsf{T}-1$, $i=0,...,\mathsf{I}-1$, $M^{(n)}_{0,i}=M_{0,i}$,
	\begin{equation*}\label{M update}
	 \frac{M^{(n)}_{\tau+1,i}-M^{(n)}_{\tau,i}}{\Delta t}-\sigma\big(\Delta_\sharp M^{(n)}_{\tau+1}\big)_i-\text{div}_\sharp(M_{\tau+1}\,\mathsf{Q}^{(n)}_\tau)_i=0.
	\end{equation*}
	\item[\textbf{(ii)}] For $\tau=0,\dots, \mathsf{T}$, $i=0,...,\mathsf{I}$, $\bar{M}_{\tau,i}^{(0)}=M_{\tau,i}^{(0)}$ and $\bar{W}_{\tau+1,i,\pm}^{(0)}=W_{\tau+1,i,\pm}^{(0)}$, $\forall n\geq 1$,
	$$
	W_{\tau+1,i,\pm}^{(n)}=M_{\tau+1,i}^{(n)}\mathsf{Q}^{(n)}_{\tau,i,\pm}=\left(M_{\tau+1,i}^{(n)}\big(\mathsf{Q}_{\tau,i,L}^{(n)}\big)^+,M_{\tau+1,i}^{(n)}\big(\mathsf{Q}_{\tau,i,R}^{(n)}\big)^-\right),
	$$
	$$
	\big(\bar{M}_{\tau,i}^{(n)},\bar{W}_{\tau+1,i,\pm}^{(n)}\big)=(1-\frac{2}{n+1})\big(\bar{M}_{\tau,i}^{(n-1)},\bar{W}_{\tau+1,i,\pm}^{(n-1)}\big)+\frac{2}{n+1}\big(M_{\tau,i}^{(n)},W_{\tau+1,i,\pm}^{(n)}\big).
	$$
	\item[\textbf{(iii)}] For $\tau=0,\dots, \mathsf{T}-1$, $i=0,...,\mathsf{I}$, $\hat{\mathsf{Q}}_{\tau,i,\pm}^{(0)}=\mathsf{Q}^{(0)}_{\tau,i,\pm}$ and $\forall n\geq 1$:
	\begin{equation*}
	\hat{\mathsf{Q}}_{\tau,i,\pm}^{(n)}=\left((\hat{\mathsf{Q}}_{\tau,i,L}^{(n)})^+,(\hat{\mathsf{Q}}_{\tau,i,R}^{(n)})^-\right)=\left(\frac{(\bar{W}_{\tau+1,i,L}^{(n)})^+}{\bar{M}_{\tau+1,i}^{(n)}},\frac{(\bar{W}_{\tau+1,i,R}^{(n)})^-}{\bar{M}_{\tau+1,i}^{(n)}}\right).
	\end{equation*}

	\item[\textbf{(iv)}]  Solve for $\tau=0,\dots, \mathsf{T}-1$, $i=0,...,\mathsf{I}-1$, $U^{(n)}_{\mathsf{T},i}=\big(g_h[\bar M^{(n)}_{\mathsf{T}}]\big)_i$,
	\begin{equation*}\label{U update}
 \frac{U_{\tau,i}^{(n)}-U_{\tau+1,i}^{(n)}}{\Delta t}- \sigma \Delta_\sharp U^{(n)}_{\tau,i}+\hat{\mathsf{Q}}^{(n)}_{\tau,i,\pm}\cdot (D_\sharp U^{(n)}_{\tau})_i- \frac{1}{2}\vert \hat{\mathsf{Q}}^{(n)}_{\tau,i,\pm}\vert ^2-V_i=\big(f_h[\bar M^{(n)}_{\tau+1}]\big)_i.
	 \end{equation*}
	\item[\textbf{(v)}] Update the policy,  $\tau=0,\dots, \mathsf{T}-1$, $i=0,...,\mathsf{I}$,
	\begin{equation*}
	\mathsf{Q}^{(n+1)}_{\tau,i}=\left(\min \left\{R,(D_L U^{(n)}_{\tau,i})^+\right\},\,\,\max \left\{-R,(D_R U^{(n)}_{\tau,i})^-\right\}\right).
	\end{equation*}
	If $\|\mathsf{Q}^{(n+1)}-\mathsf{Q}^{(n)}\|_{l^\infty}$ is small enough, stop. Otherwise set $n\leftarrow n+1$ and continue.
	 \end{itemize}
	 $\|\mathsf{Q}^{(n+1)}-\mathsf{Q}^{(n)}\|_{l^\infty}$ in discrete \textbf{SPI2} is defined the same as \eqref{sup q-q}.	 

\subsection{Numerical results}
\subsubsection{Results in $1D$}
For all the $1D$ examples we consider the domain $Q:=[0,1]\times [-1,1]$. $\Delta t=0.005$ and we set $i=0,...,\mathsf{I}$, $\mathsf{I}=200$ with fictitious points $i=-1$ and $i=\mathsf{I}+1$. $H(x,\nabla u)=\frac{1}{2}|\nabla u|^2-V(x)$, the interaction terms and their discretized forms with:
\begin{align*}
f[(m(t)](x)=\theta \int_{-1}^{1}l (x-y)m(t,y)dy,\,\,\big(f_h[M_\tau]\big)_i=\theta \sum_j hl\big((i-j)h\big)M_{\tau,j},\\
g[(m(T)](x)=\eta \int_{-1}^{1}l (x-y)m(T,y)dy,\,\,\big(g_h[M_\mathsf{T}]\big)_i=\eta \sum_j hl\big((i-j)h\big)M_{\mathsf{T},j}.
\end{align*}
We first consider test 1 and test 2 with periodic boundary conditions: for all $t\in [0,1]$, $u(t,-1)=u(t,1)$ and $m(t,-1)=m(t,1)$. We set $m_0=\frac{1}{2}(\cos(\pi x)+1)$, $\theta=1$, $\eta=0.2$, $V(x)=0$, $l(x-y)=\sin (\pi(x-y))$ discretized by $l\big((i-j)h\big)=\sin \big(\pi(i-j)h\big)$. The only difference is that $\eta=0.2$ in test 1 and $\eta=-0.5$ in test 2. For all $\tau$ we set $M_{\tau,-1}=M_{\tau,\mathsf{I}}$ and $M_{\tau,0}=M_{\tau,\mathsf{I}+1}$. $U_{\tau,i}$ is considered likewise. It is known from \cite[3.4.2, Example 4]{carmona2018} that, since $f$ and $g$ are odd functions, the monotonicity condition $(A4)$ is satisfied. With a smooth initial density $m_0$ the solution should be unique. \par
In \cref{fig:test12decay0} and \cref{fig:test12decay10x}, we show the convergence of the algorithms by plotting $\|\mathsf{Q}^{(n+1)}-\mathsf{Q}^{(n)}\|_{l^\infty}$ with different initial guesses. Different numbers of iterations are needed for test 1 and test 2 to reach a similar level of precision. The convergence performances with \textbf{SPI1} and \textbf{SPI2} are indistinguishable if we use the initial guess $q^{(0)}=0$. The guess $q^{(0)}=10x$ is not continuous at the boundary and the performances are clearly inferior. We observe differences between \textbf{SPI1} and \textbf{SPI2} in their speed of convergence, but multiple tests with other guesses indicate there is no clear advantage with either \textbf{SPI1} or \textbf{SPI2} versus the other.
\begin{figure}[h!]
	\centering
	\caption{Decay of $\|\mathsf{Q}^{(n+1)}-\mathsf{Q}^{(n)}\|_{l^\infty}$ for test 1 (left) and test 2 (right), $q^{(0)}=0$}\label{fig:test12decay0}
	\begin{tabular}{cc}
	\includegraphics[width=0.46\textwidth]{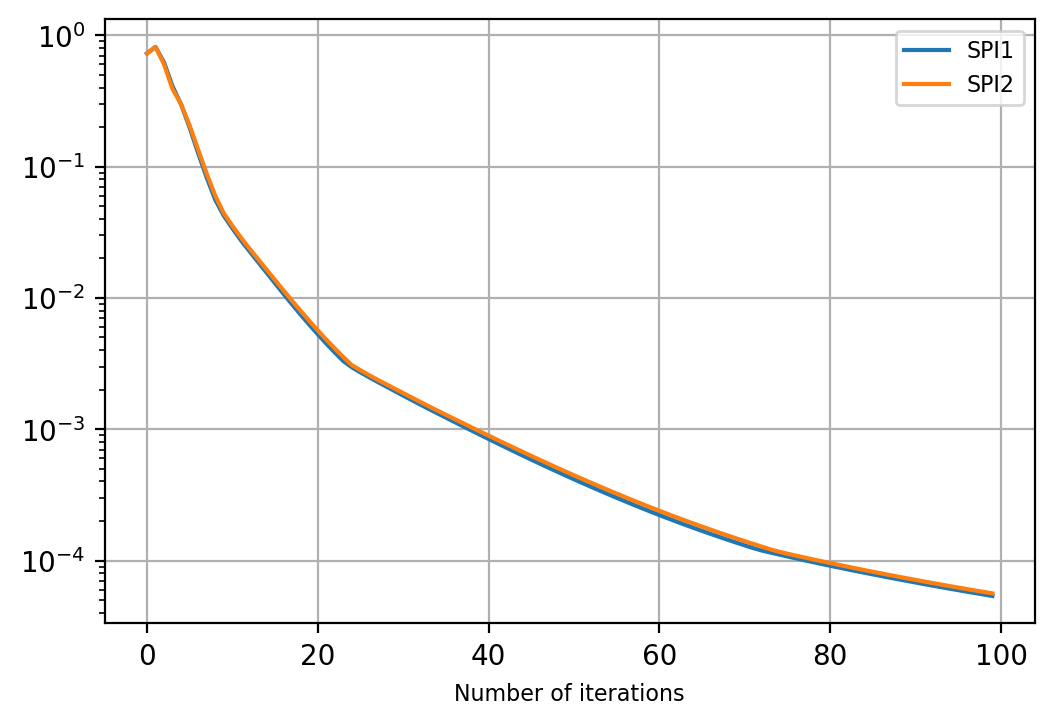} &
	\includegraphics[width=0.46\textwidth]{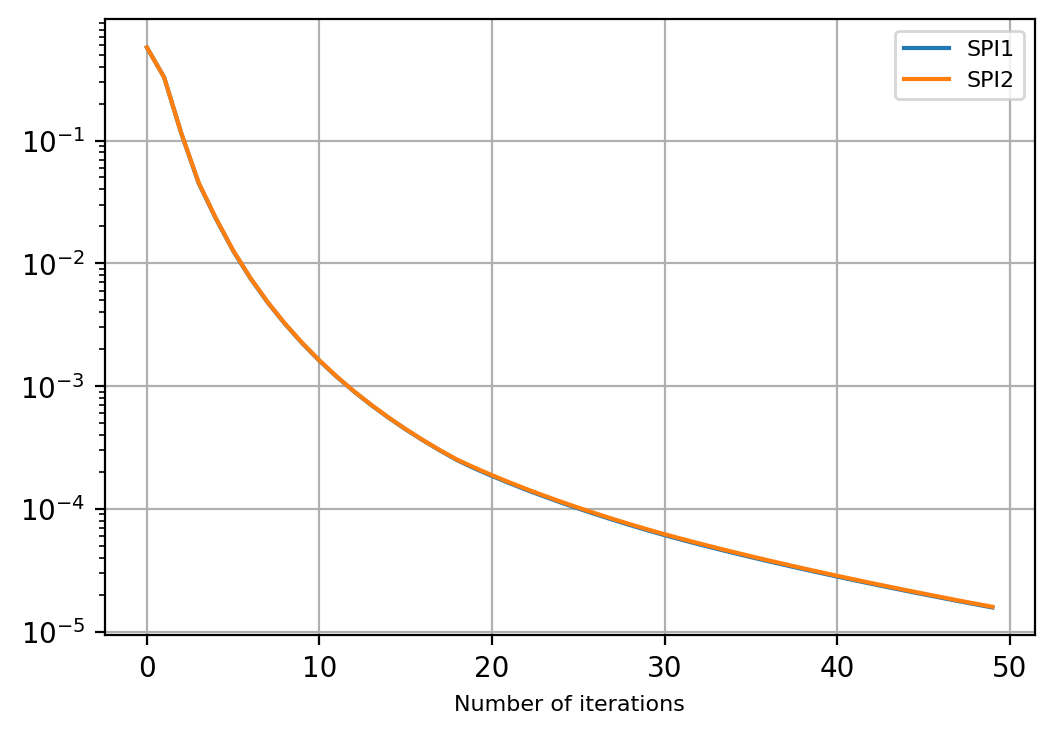} \\
	\end{tabular}
\end{figure}

\begin{figure}[h!]
	\centering
	\caption{Decay of $\|\mathsf{Q}^{(n+1)}-\mathsf{Q}^{(n)}\|_{l^\infty}$ for test 1 (left) and test 2 (right), $q^{(0)}=10x$}\label{fig:test12decay10x}
	\begin{tabular}{cc}
	\includegraphics[width=0.46\textwidth]{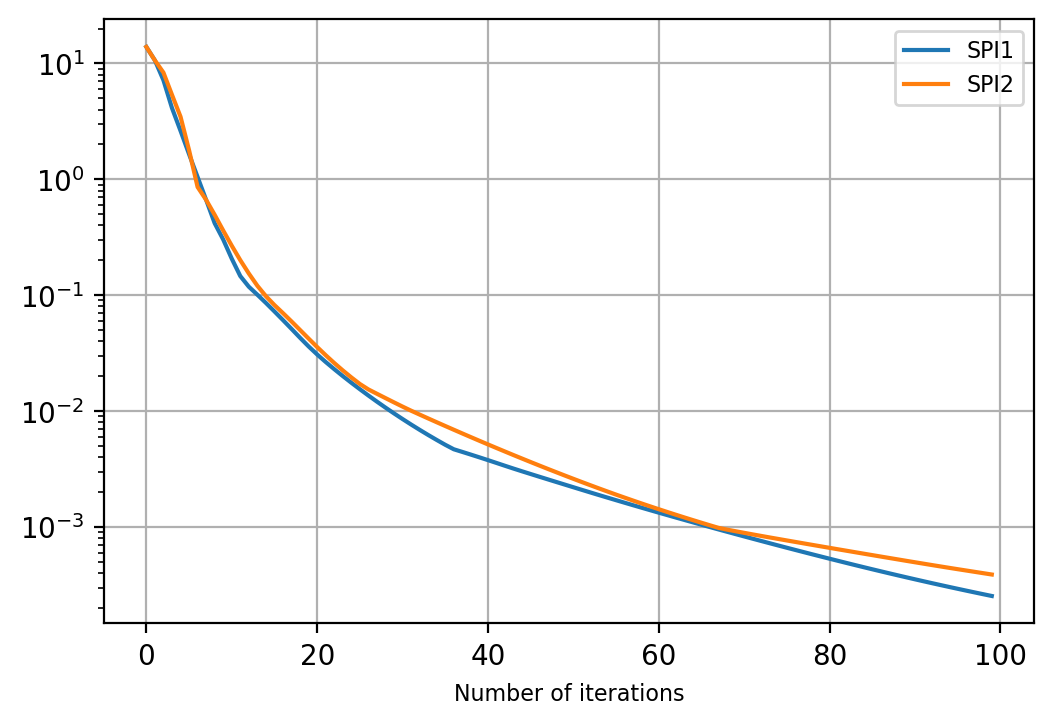} &
	\includegraphics[width=0.46\textwidth]{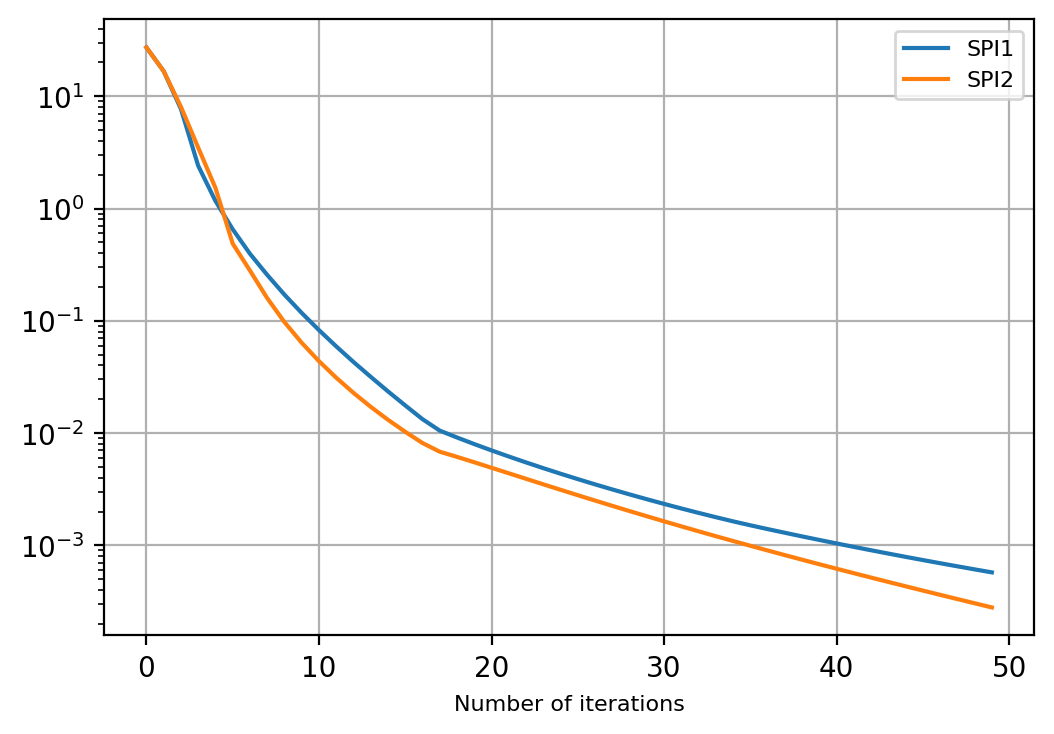} \\
	\end{tabular}
\end{figure}

\begin{figure}[h!]
	\centering
	\caption{Test 1: evolution of $m$ (left) and $u$ (right)}\label{fig:test1}
	\begin{tabular}{cc}
	\includegraphics[width=0.46\textwidth]{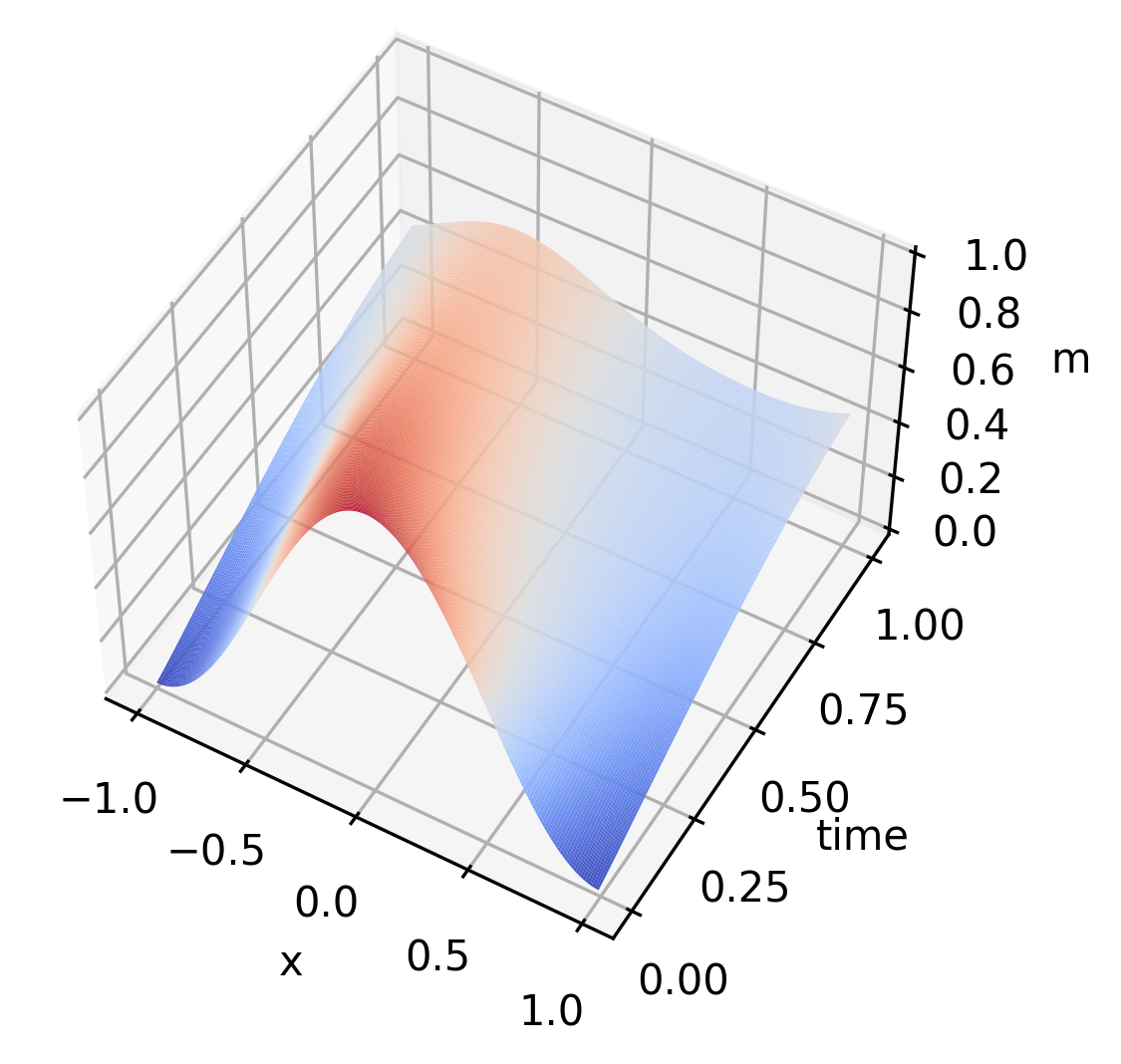} &
	\includegraphics[width=0.46\textwidth]{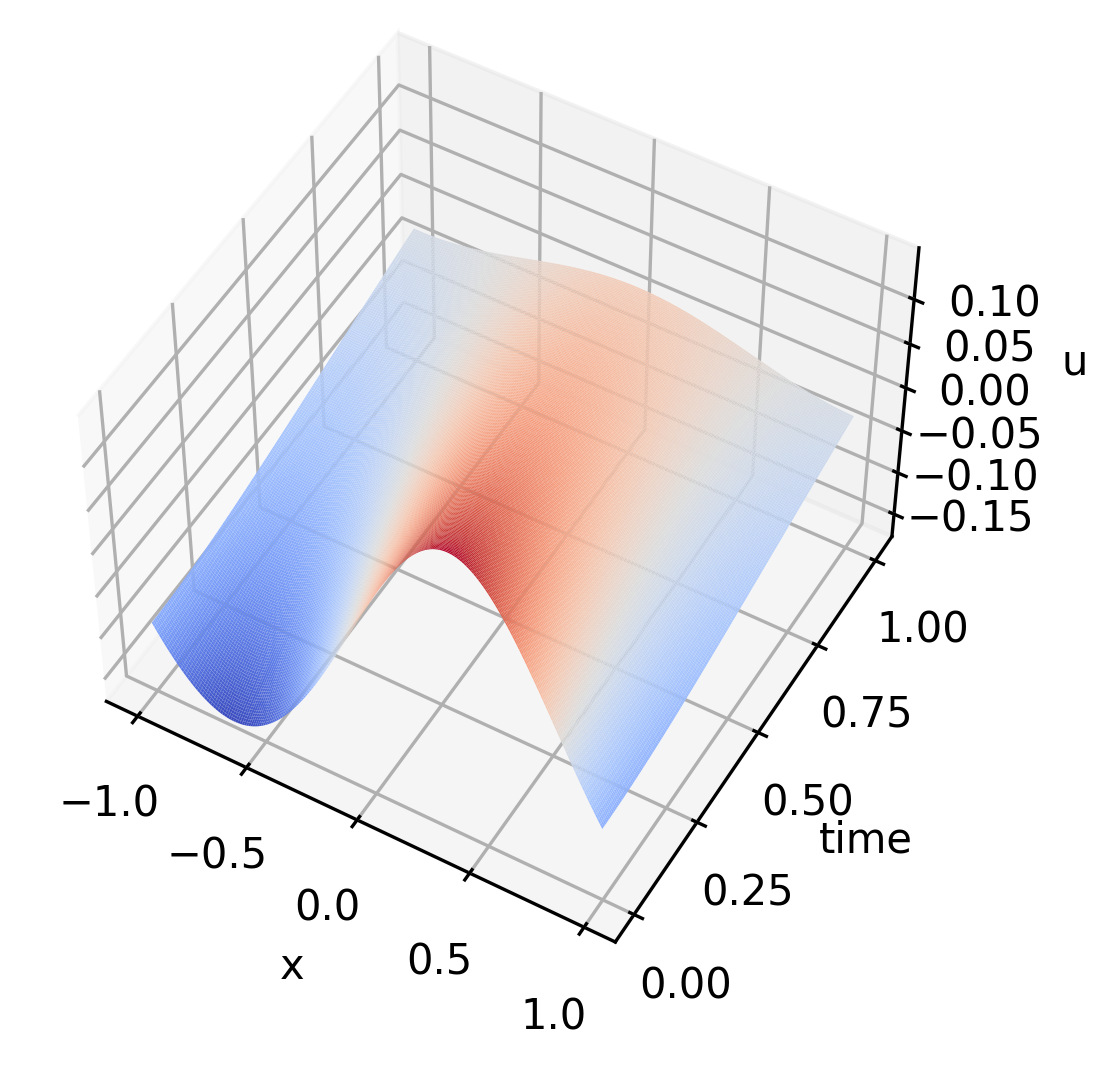} \\
	\end{tabular}
\end{figure}

\begin{figure}[h!]
	\centering
	\caption{Test 2: evolution of $m$ (left) and $u$ (right)}\label{fig:test2}
	\begin{tabular}{cc}
	\includegraphics[width=0.46\textwidth]{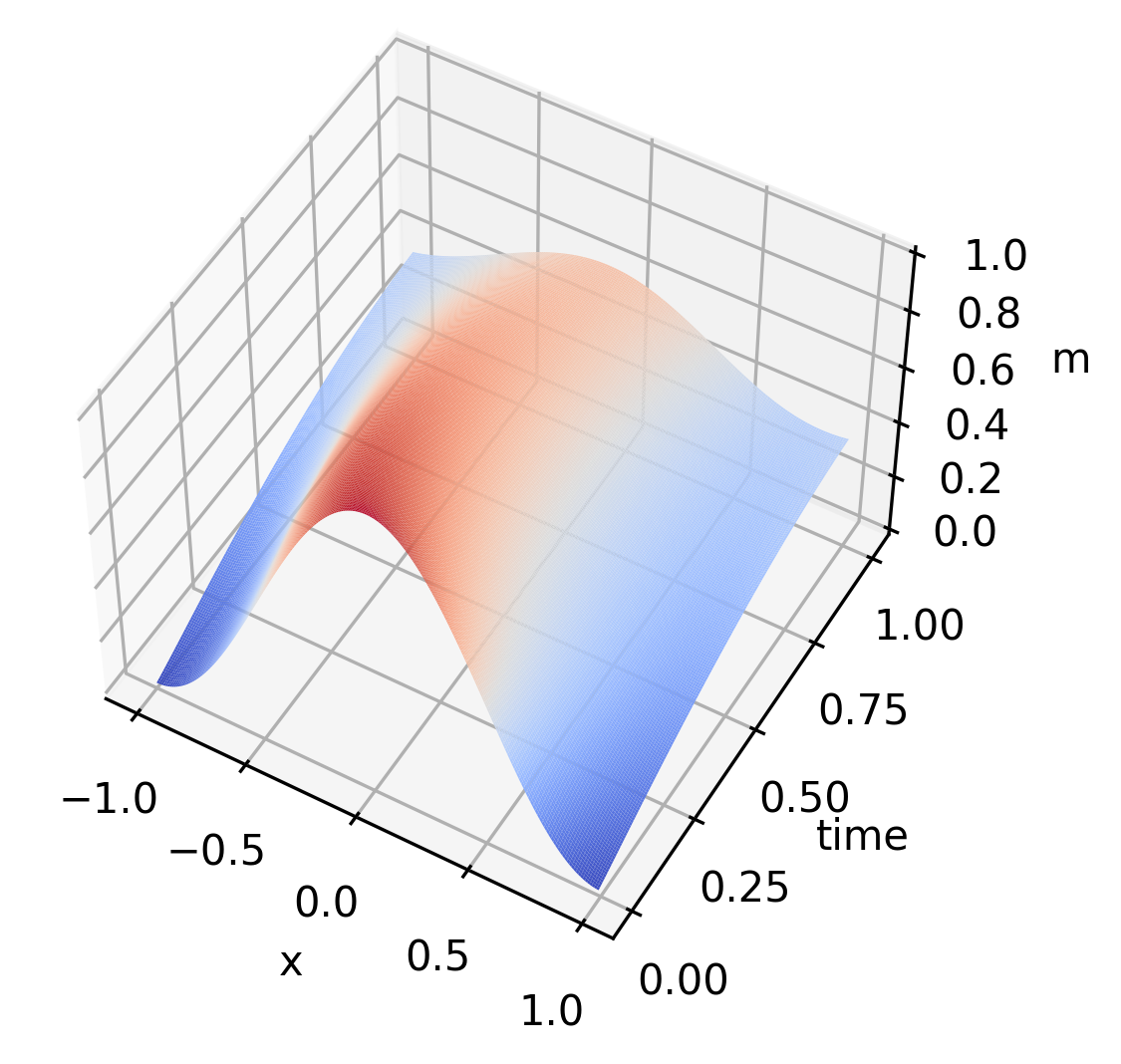} &
	\includegraphics[width=0.46\textwidth]{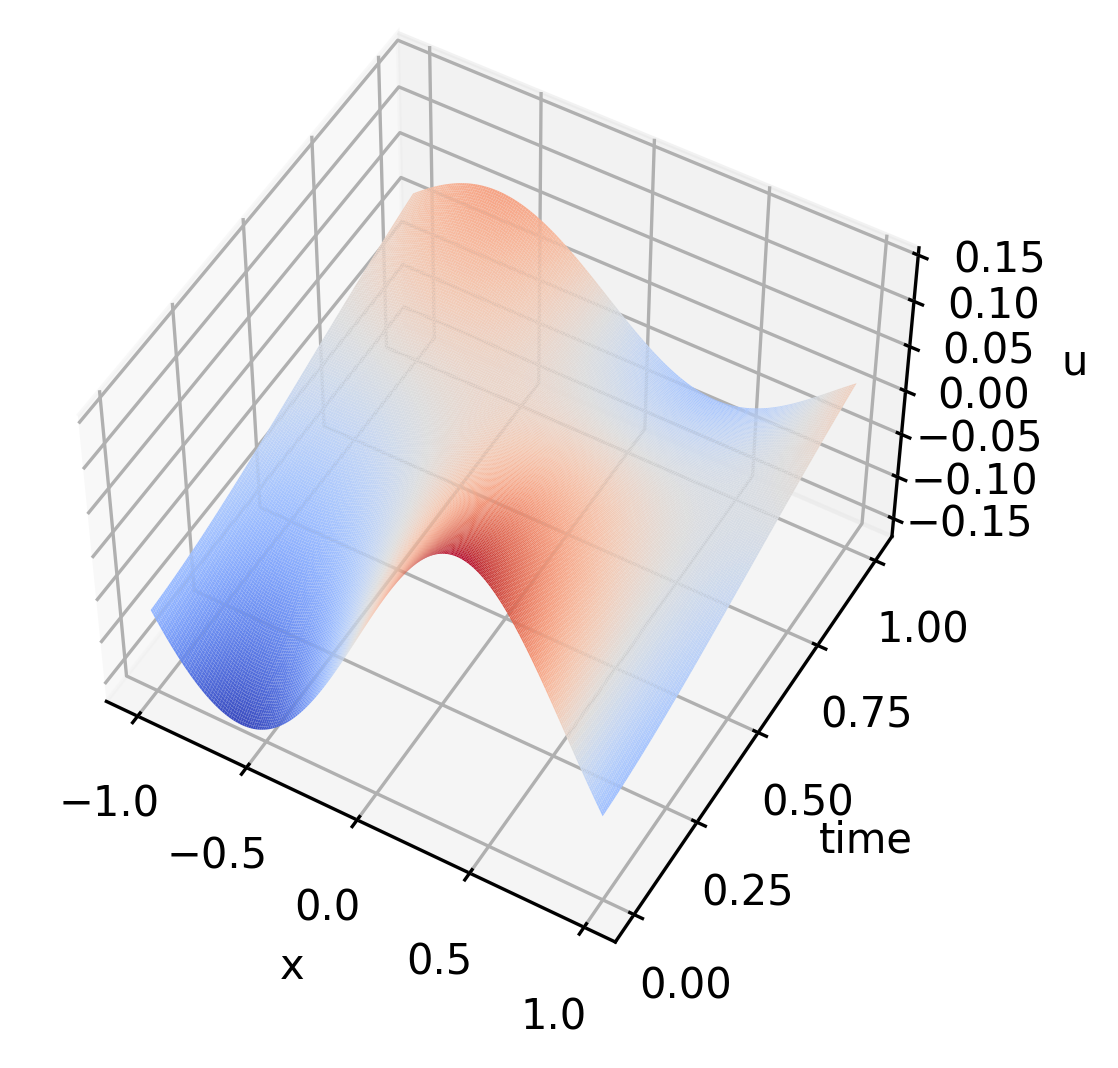} \\
	\end{tabular}
\end{figure}
Next we consider another example with Neumann boundary condition: we set $\partial_x u(t,-1)=\partial_x u(t,1)=0$ and $\partial_xm(t,-1)=\partial_x m(t,1)=0$ for all $t\in [0,1]$.\par
\begin{itemize}
\item Test 3: $\theta=1$, $\eta=0.2$, $\zeta=0.2$, $V(x)=(x+0.5)^2$, $l(x-y)=e^{-\zeta (x-y)^2}$.
\end{itemize}
We discretize $l(\cdot)$ by $l\big((i-j)h\big)=e^{-\zeta \big((i-j)h\big)^2}$. For all $\tau$ we set $M_{\tau,-1}=M_{\tau,0}$ and $M_{\tau,\mathsf{I}}=M_{\tau,\mathsf{I}+1}$. $U_{\tau,i}$ is considered likewise. The monotonicity condition $(A4)$ is satisfied for test 3, which can be shown by similar reasoning as \cite[Example 5, p. 173]{carmona2018}. Moreover, since $l(\cdot)$ is an even function, potential \eqref{Potential} can be formulated as (c.f. \cite[(6.134)-(6.135), p. 604]{carmona2018}):
$$
F[(m(t)]=\frac{1}{2}\theta \int_{-1}^{1}\int_{-1}^{1}e^{-\zeta (x-y)^2}m(t,y)m(t,x)dydx,
$$
$$
F_h[M_\tau]=\frac{1}{2}\theta \sum_i\sum_j h^2e^{-\zeta \big((i-j)h\big)^2}M_{\tau,j}M_{\tau,i},
$$
$$
G[(m(T)]=\frac{1}{2}\eta \int_{-1}^{1}\int_{-1}^{1}e^{-\zeta (x-y)^2}m(T,y)m(T,x)dydx,
$$
$$
G_h[M_\mathsf{T}]=\frac{1}{2}\eta \sum_i\sum_j h^2e^{-\zeta \big((i-j)h\big)^2}M_{\mathsf{T},j}M_{\mathsf{T},i}.
$$
We can consider a discretized version of $J_{t_0}$ (plotted in \cref{fig:test3decay}) as:
$$
\mathsf{J}_{\tau'}=\sum_{\tau=\tau'}^\mathsf{T}\Delta t\left(\sum_i h M_{\tau,i}\big(\frac{1}{2}\vert \mathsf{Q}_{\tau,i,\pm}\vert ^2+V_i\big)+F_h[M_\tau]\right)+G_h[M_\mathsf{T}].
$$
\begin{figure}[h!]
	\centering
	\caption{Test 3: Decay of $\|\mathsf{Q}^{(n+1)}-\mathsf{Q}^{(n)}\|_{l^\infty}$ (left) and $\mathsf{J}_{\tau'}$ (right)}\label{fig:test3decay}
	\begin{tabular}{cc}
	\includegraphics[width=0.46\textwidth]{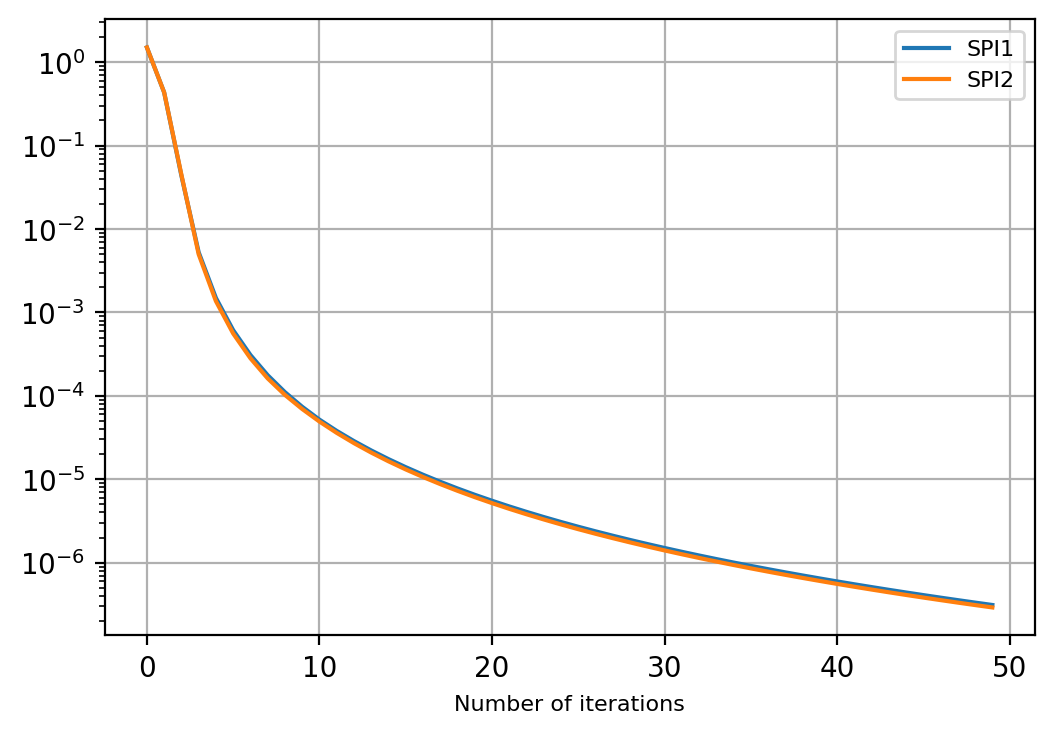} &
	\includegraphics[width=0.46\textwidth]{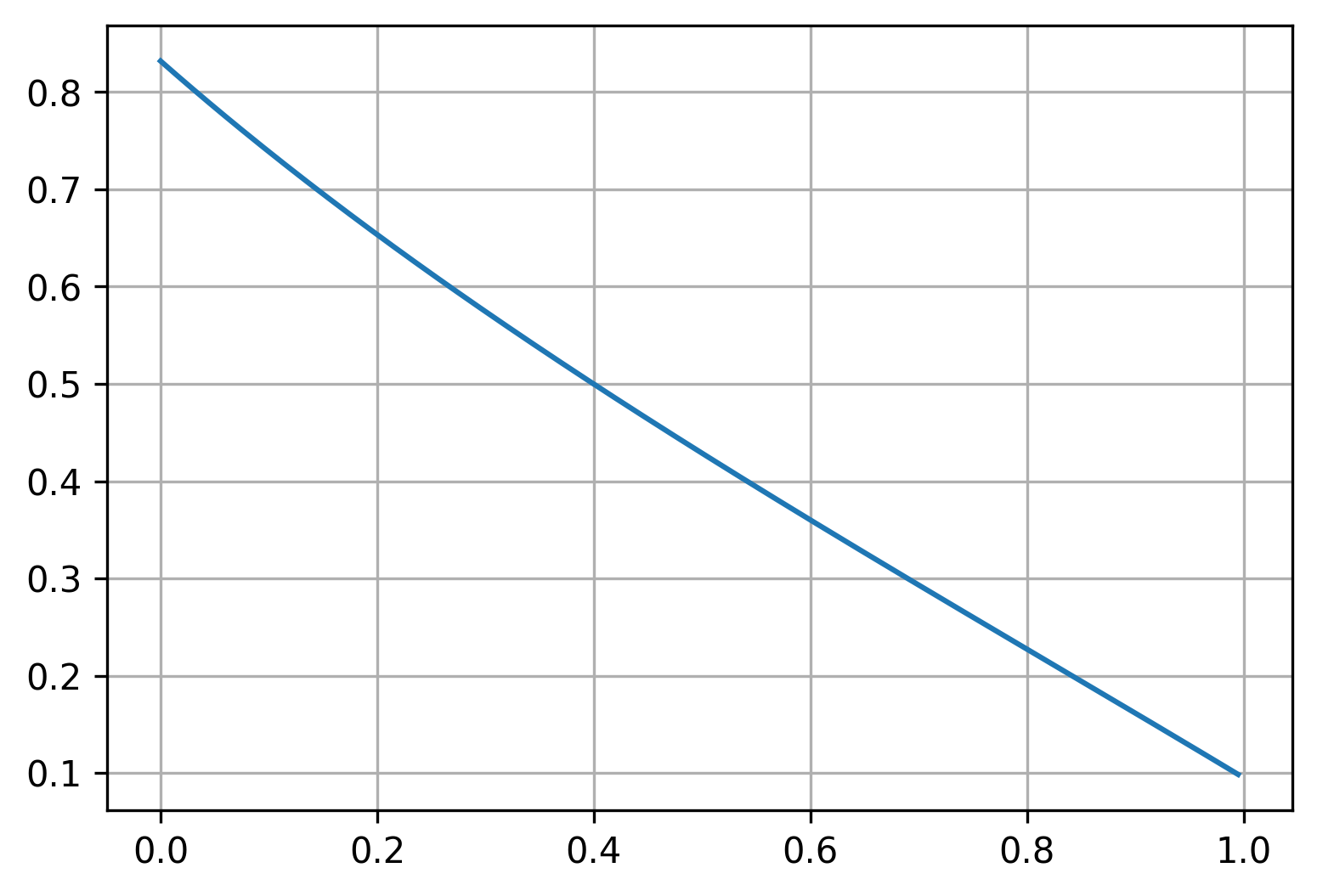} \\
	\end{tabular}
\end{figure}
\begin{figure}[h!]
	\centering
	\caption{Test 3: evolution of $m$ (left) and $u$ (right)}\label{fig:test3}
	\begin{tabular}{cc}
	\includegraphics[width=0.46\textwidth]{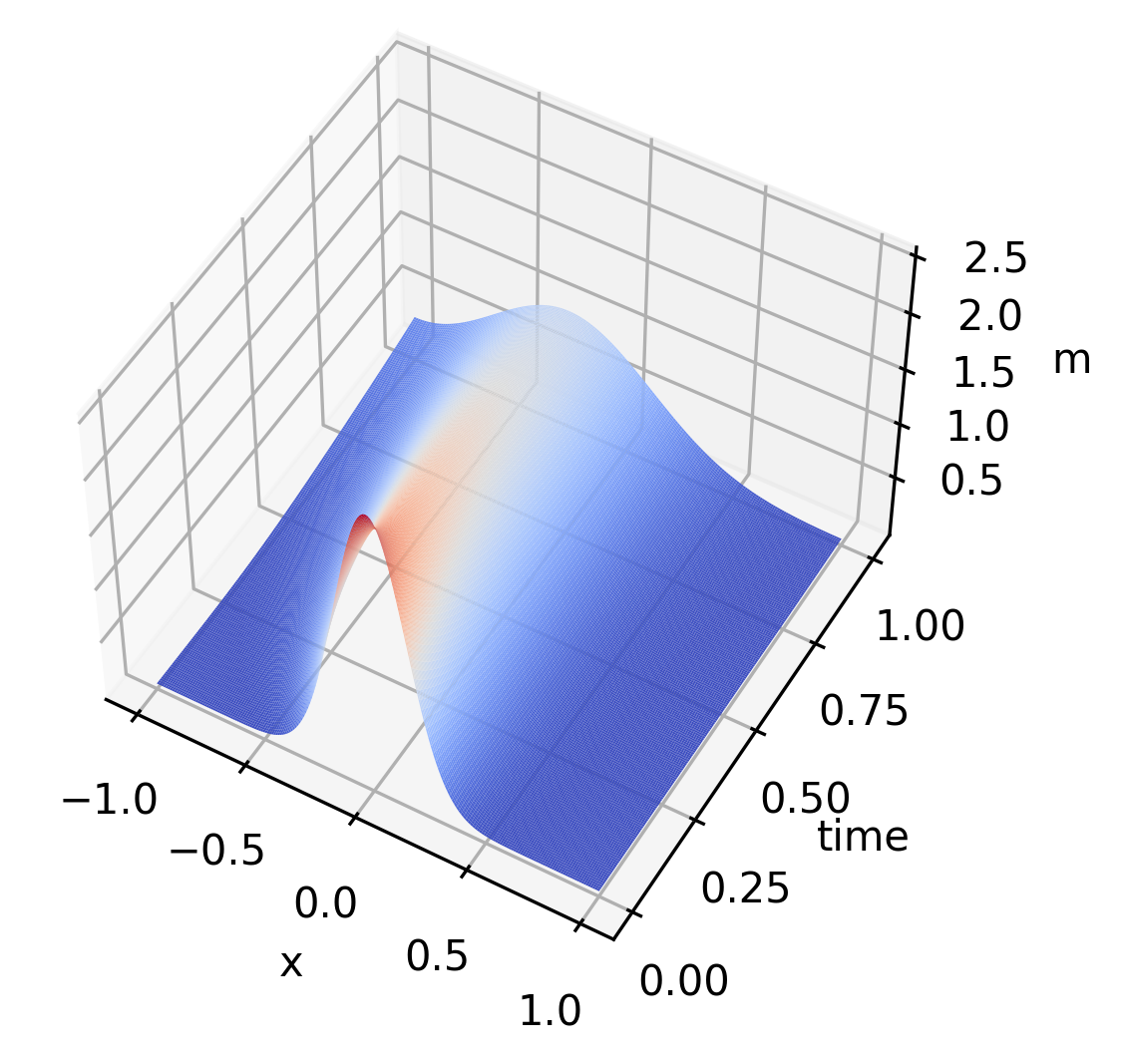} &
	\includegraphics[width=0.46\textwidth]{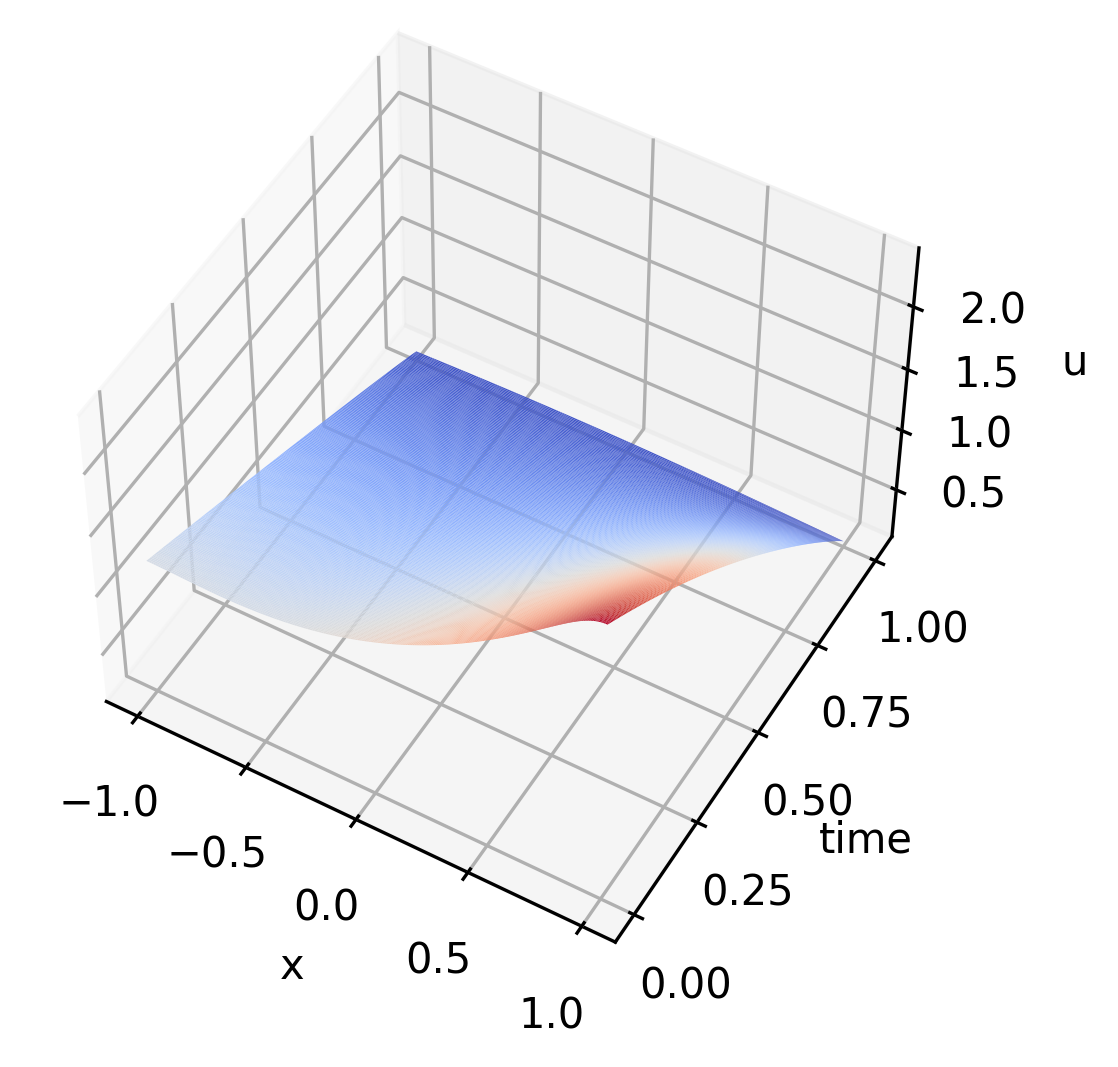} \\
	\end{tabular}
\end{figure}

\subsubsection{Results in $2D$}
We provide a test case in $2D$ with anti-monotone and local coupling. We use the space domain $(x_1,x_2)\in (0,1)^2$, $\sigma=0.25$, $T=0.5$ and Neumann boundary condition. We follow \cite[(2.16), p. 8]{ak} for numerical approximation method with Neumann boundary condition. For the results in this paragraph we have used $h=0.01$ in both space dimensions and $\Delta t=0.01$.  
$
V(x)=5\big(\cos (2\pi x_1)+\cos (2\pi x_2)\big),\,\,{\tilde{f}}(m)=-1.5m^{\frac{4}{5}},
$
$
u_T(x)=-2\big(e^{-10(x_1-0.8)^2}+e^{-10(x_2-0.8)^2}\big),
$
$$
m_0(x)=\frac{e^{-20(x_1-0.2)^2}+e^{-20(x_2-0.2)^2}}{\int_0^1\int_0^1\big(e^{-20(x'_1-0.2)^2}+e^{-20(x'_2-0.2)^2}\big)dx'_1dx'_2}.
$$
We illustrate the evolution of density and the turnpike phenomena in \cref{fig:2d}. Turnpike phenomena for MFG with anti-monotone and local coupling has been discussed in Cirant and Porreta \cite{Cirant2021}.\\\\

\begin{figure}[h!]
   \caption{Density evolution in the 2D test case}\label{fig:2d}
\begin{tabular}{cc}
\includegraphics[width=.46\textwidth]{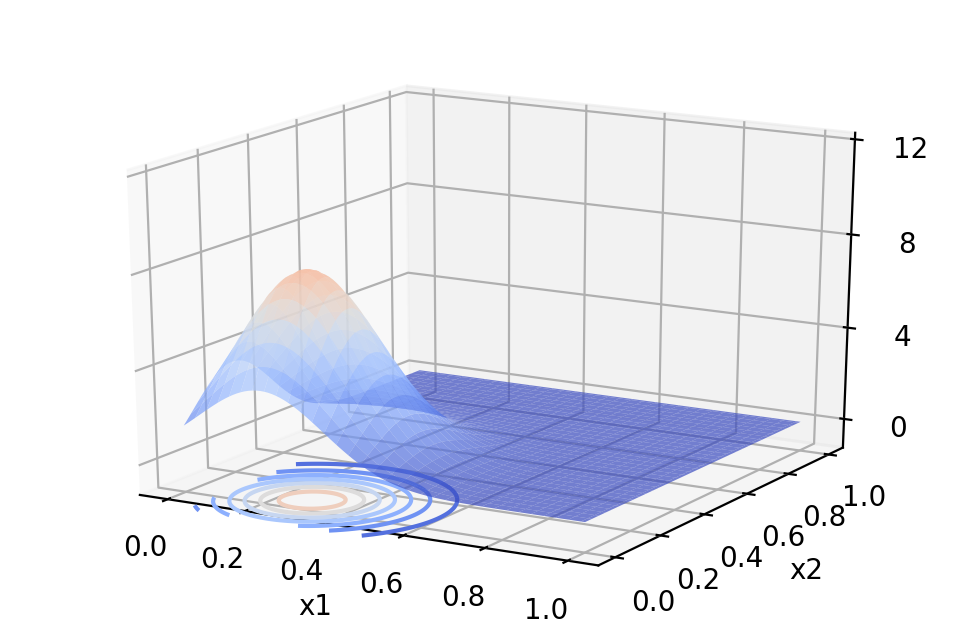} &
\includegraphics[width=.46\textwidth]{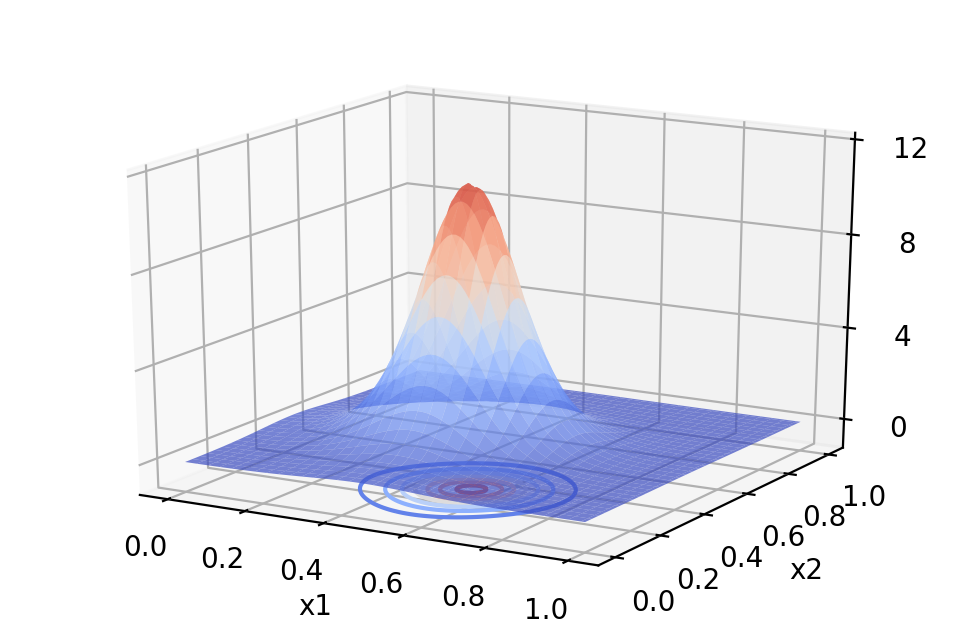} \\
$t=0$ & $t=0.16$ \\
\includegraphics[width=.46\textwidth]{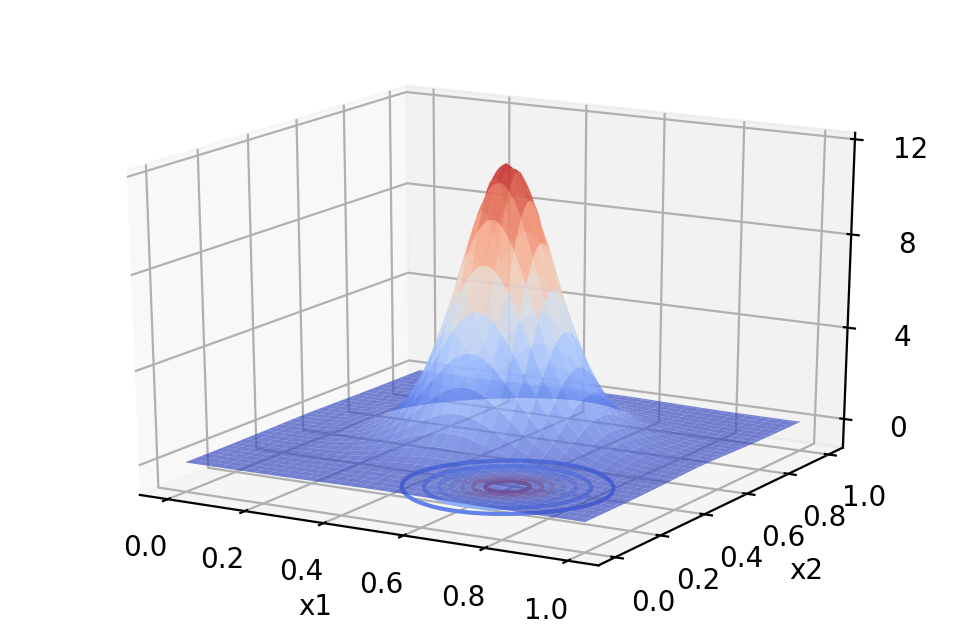} &
\includegraphics[width=.46\textwidth]{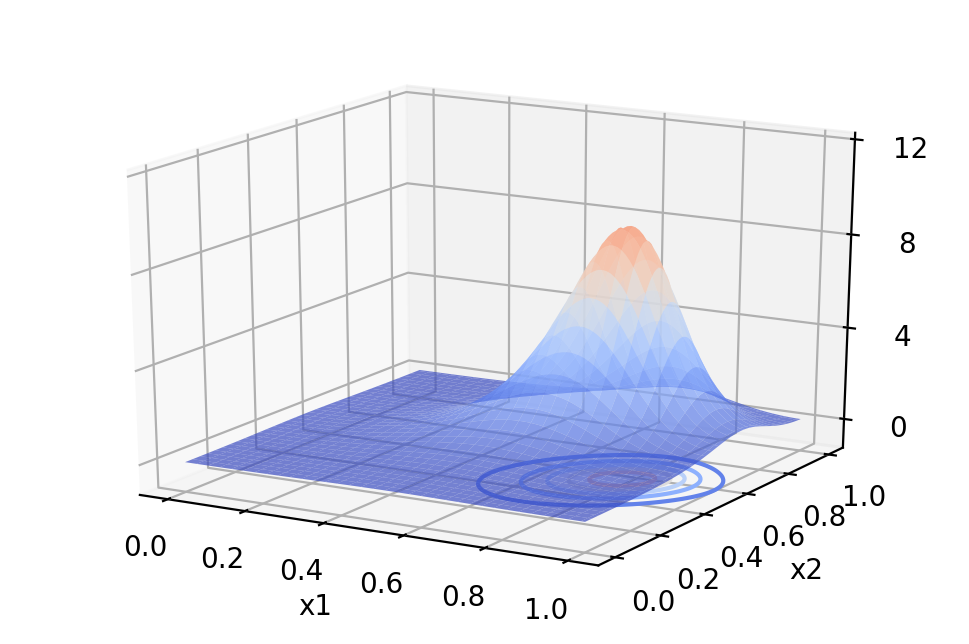} \\
$t=0.33$ & $t=0.5$
 \end{tabular}
\end{figure}
\appendix
\counterwithin{theorem}{section}
\section{Some classical parabolic estimates results}
Consider the linear parabolic equation:
\begin{equation}\label{Linear Parabolic}
		\left\{\begin{split}
		\qquad &	-\partial_t u-\sigma \Delta u+b(t,x)\cdot  \nabla u+c(t,x)u=f(t,x) &\text{ in }Q,\\
			&u(T,x)=u_T(x). &\text{ in }\T^d.
		\end{split}\right.
	\end{equation}	
The following two results are very classical for equations on cylinders with boundary conditions (\cite[Theorem 5.1, p. 320 ]{LSU} and \cite[Theorem 9.1 pp. 341-342]{LSU}). A complete proof of them in the flat torus case can be found in \cite[Appendix pp. 17-18]{cirant2020}.
\begin{proposition}\label{linear estim}
	Let $b\in \mathcal C^{\alpha/2,\alpha}(Q;\R^d)$, $c$ and $f$ belong to $\mathcal C^{\alpha/2,\alpha}(Q)$ and $u_T\in C^{2+\alpha}(\T^d)$.  Then the problem \eqref{Linear Parabolic}
admits a unique solution $u\in \mathcal C^{1+\alpha/2,2+\alpha}(Q)$  and it holds
	\begin{equation*}
		\| u\| _{\mathcal C^{1+\alpha/2,2+\alpha}(Q)}\leq C(\| f\| _{\mathcal C^{\alpha/2,\alpha}(Q)}+\| u_T\| _{\mathcal C^{2+\alpha}(\T^d)}),
	\end{equation*}
	where $C$ depends on the $\mathcal C^{\alpha/2,\alpha}$ norms of $b$, $c$ and remains bounded for bounded values of $T$.\end{proposition}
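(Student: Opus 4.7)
The plan is to follow the standard route for parabolic Schauder theory on the flat torus, where absence of spatial boundary simplifies the analysis considerably. First, I would reverse time via $\tilde{u}(t,x)=u(T-t,x)$ to convert \eqref{Linear Parabolic} into a forward equation
\begin{equation*}
\partial_t \tilde u - \sigma \Delta \tilde u + \tilde b \cdot \nabla \tilde u + \tilde c\, \tilde u = \tilde f, \qquad \tilde u(0,x)=u_T(x),
\end{equation*}
with $\tilde b(t,x)=b(T-t,x)$ and similarly for $\tilde c,\tilde f$; all Hölder norms are preserved under this substitution. Next, split $\tilde u = v+w$ where $w$ solves the pure heat equation $\partial_t w-\sigma \Delta w=0$ on $\T^d$ with $w(0,\cdot)=u_T$. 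Heat semigroup estimates on the torus (which follow directly from Fourier analysis or from convolving with the periodicized Gaussian kernel) give $\|w\|_{\mathcal C^{1+\alpha/2,2+\alpha}(Q)} \leq C\|u_T\|_{\mathcal C^{2+\alpha}(\T^d)}$. Then $v$ satisfies the same type of equation with zero initial data and modified source $\tilde f - \tilde b \cdot \nabla w - \tilde c\, w$, whose $\mathcal C^{\alpha/2,\alpha}(Q)$ norm is controlled by $\|f\|_{\mathcal C^{\alpha/2,\alpha}(Q)}+\|u_T\|_{\mathcal C^{2+\alpha}(\T^d)}$.

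Second, I would establish existence and the a priori bound simultaneously by the \emph{method of continuity}. For $\lambda\in[0,1]$ define $L_\lambda v := \partial_t v-\sigma \Delta v+\lambda(\tilde b\cdot \nabla v+\tilde c\, v)$. At $\lambda=0$ existence and Schauder bounds are available from the heat equation step above. The continuity method closes provided one has the uniform a priori estimate
\begin{equation*}
\|v\|_{\mathcal C^{1+\alpha/2,2+\alpha}(Q)} \leq C\bigl(\|L_\lambda v\|_{\mathcal C^{\alpha/2,\alpha}(Q)}+\|v(0,\cdot)\|_{\mathcal C^{2+\alpha}(\T^d)}\bigr),
\end{equation*}
with $C$ independent of $\lambda$. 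This estimate is the real content and would be proved by a local-to-global argument: cover $\T^d$ by finitely many small coordinate charts, apply the classical interior parabolic Schauder estimate on each lifted cylinder (as in \cite[Ch.~IV, Thm.~5.1]{LSU}), and patch via a smooth partition of unity subordinated to the cover. The contributions of the lower-order terms $\tilde b\cdot \nabla v+\tilde c\, v$ to the right-hand side are absorbed using the standard interpolation inequality between $\mathcal C^{\alpha/2,\alpha}$ and $\mathcal C^{1+\alpha/2,2+\alpha}$, with the constant depending on the $\mathcal C^{\alpha/2,\alpha}$ norms of $b,c$. The dependence on $T$ remains bounded on bounded time intervals: a standard absorption, using $v\mapsto e^{-\mu t}v$ for sufficiently large $\mu$, controls the zeroth-order contribution.

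Finally, uniqueness follows from a direct energy estimate on the backward equation: if $f\equiv 0$ and $u_T\equiv 0$, multiply \eqref{Linear Parabolic} by $u$, integrate over $\T^d$ and apply Young's inequality to $b\cdot \nabla u$ to obtain $-\frac{d}{dt}\int_{\T^d}u^2\,\d x\leq C(\|b\|_\infty,\|c\|_\infty)\int_{\T^d}u^2\,\d x$; combined with $u(T,\cdot)=0$, a backward Gronwall argument forces $u\equiv 0$.

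The main obstacle is, as expected, the a priori Schauder estimate. On the flat torus the argument is essentially a routine localization of the classical interior parabolic Schauder theory, and the cited references \cite[Ch.~IV]{LSU} together with the explicit write-up in \cite[Appendix]{cirant2020} provide the careful bookkeeping on how the constant $C$ depends on the $\mathcal C^{\alpha/2,\alpha}$ norms of the coefficients and on $T$; I would rely on those references for the technical details rather than reproduce the partition-of-unity computation here.
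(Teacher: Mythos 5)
Your proposal is correct and coincides with the paper's treatment: the paper offers no proof of its own, instead citing the classical Schauder theory of \cite[Theorem 5.1, p.~320]{LSU} and its adaptation to the flat torus in \cite[Appendix]{cirant2020}, and your sketch (time reversal, heat-semigroup splitting, method of continuity with a localized interior Schauder estimate, interpolation for the lower-order terms, energy-estimate uniqueness) is exactly the standard argument underlying those references. No gaps.
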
	
		\begin{proposition}\label{linear estim Sobolev}
	Let $r>d+2$, $b\in L^\infty(Q;\R^d)$, $c\in L^\infty(Q)$, $f\in L^r(Q)$ and $u_T\in W^{2-\frac{2}{r}}_r(\T^d)$.  Then the problem \eqref{Linear Parabolic} admits a unique solution $u\in W^{1,2}_r(Q)$ and it holds
	\begin{equation*}
		\| u\| _{W^{1,2}_r(Q)}\leq C(\| f\| _{L^r(Q)}+\| u_T\| _{W^{2-\frac{2}{r}}_r(\T^d)}),
	\end{equation*}
	where $C$ depends on the norms of $b$, $c$ and remains bounded for bounded values of $T$. Moreover, the following embedding \cite[Corollary pp. 342-343]{LSU} holds:
	\begin{equation}\label{embedding}
\|u\|_{\mathcal C^{1-\frac{d+2}{2r},2-\frac{d+2}{r}}(Q)} \leq C\|u\|_{W^{1,2}_r(Q)}.
\end{equation}
\end{proposition}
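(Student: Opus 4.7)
The plan is to prove \cref{linear estim Sobolev} by reducing the general first/zeroth-order linear parabolic problem to the pure heat equation on the torus and then reinstating the lower-order terms by perturbation. First I would reverse time by setting $\tilde u(t,x)=u(T-t,x)$, so that \eqref{Linear Parabolic} becomes a forward Cauchy problem $\partial_t \tilde u-\sigma\Delta \tilde u+\tilde b\cdot\nabla \tilde u+\tilde c\tilde u=\tilde f$ on $(0,T)\times\T^d$ with initial datum $u_T$, and then split $\tilde u=v+w$, where $v$ solves the inhomogeneous heat equation with the full source and initial data, and $w$ absorbs the lower-order perturbation. Existence will come out of the fixed-point argument in the last step; uniqueness is then standard via the $L^2$ energy identity after multiplying by $u$ and using Grönwall.

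Next I would establish maximal $L^r$ regularity for $\partial_t v-\sigma\Delta v=g$ on $(0,T)\times\T^d$, $v(0)=v_0$, with the estimate
\begin{equation*}
\|v\|_{W^{1,2}_r(Q)}\leq C\bigl(\|g\|_{L^r(Q)}+\|v_0\|_{W^{2-2/r}_r(\T^d)}\bigr).
\end{equation*}
On the torus this is most cleanly done by Fourier series: decomposing $g=\sum_k g_k(t)e^{ik\cdot x}$ and representing $v$ via Duhamel, the second spatial derivatives and the time derivative are given by operator-valued Fourier multipliers with symbols of the form $\sigma|k|^2/(i\tau+\sigma|k|^2)$. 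Verifying the Mikhlin--Hörmander condition for these symbols and invoking the vector-valued multiplier theorem (equivalently, the operator $(-\Delta)$ has bounded imaginary powers on $L^r(\T^d)$ so that $\partial_t+\sigma(-\Delta)$ has maximal $L^r$-regularity) yields the bound on $\partial_t v$ and $D^2_x v$. The contribution from the initial datum is handled by the trace characterization of $W^{2-2/r}_r(\T^d)$ as the trace space of $W^{1,2}_r((0,T)\times\T^d)$, lifting $v_0$ to an $H$-solution with controlled norm.

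With the heat-equation estimate in hand, I would close the full problem by a contraction argument. Given $u^{(k)}\in W^{1,2}_r(Q)$, define $u^{(k+1)}$ as the maximal-regularity solution of $\partial_t u^{(k+1)}-\sigma\Delta u^{(k+1)}=\tilde f-\tilde b\cdot\nabla u^{(k)}-\tilde c\, u^{(k)}$ with initial datum $u_T$. Using $\|\tilde b\cdot\nabla u+\tilde c u\|_{L^r(Q)}\leq \|b\|_{L^\infty}\|\nabla u\|_{L^r(Q)}+\|c\|_{L^\infty}\|u\|_{L^r(Q)}$, together with the interpolation $\|u\|_{L^r}+\|\nabla u\|_{L^r}\leq \varepsilon\|D^2 u\|_{L^r}+C_\varepsilon\|u\|_{L^r}$ and the fact that on a short slab $(T-\delta,T)\times\T^d$ one has $\|u\|_{L^r}\leq \delta^{1/r}\|\partial_t u\|_{L^r}+\|u_T\|_{L^r}$, one gets a contraction on $[T-\delta,T]$ for $\delta$ depending only on $\sigma$, $r$, $\|b\|_{L^\infty(Q;\R^d)}$, $\|c\|_{L^\infty(Q)}$. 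Iterating on a finite number of such slabs and summing through a discrete Grönwall produces the global estimate with constant bounded in $T$. The embedding \eqref{embedding} is a purely functional statement: it follows from the anisotropic Sobolev embedding $W^{1,2}_r(Q)\hookrightarrow \mathcal C^{1-(d+2)/(2r),2-(d+2)/r}(Q)$ valid exactly when $r>d+2$, which in turn follows from the Gagliardo--Nirenberg--Morrey inequality applied in the parabolic scaling $(t,x)\mapsto (\lambda^2 t,\lambda x)$.

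The main obstacle is the maximal $L^r$-regularity of the heat equation on the torus for $r\neq 2$. The $L^2$ case is trivial by Plancherel, but for general $r\in(1,\infty)$ one needs a genuine singular-integral argument: either verify the Mikhlin condition for the multiplier symbols above (the derivatives of the symbol in $(k,\tau)$ must decay with the right quasi-homogeneous rates), or invoke the abstract theory of $\mathcal H^\infty$-functional calculus / bounded imaginary powers for $-\Delta$ on $L^r(\T^d)$. Everything downstream — perturbation by $b,c$, uniqueness, and the Morrey embedding — is comparatively routine once this maximal regularity is secured, and is precisely the content reproduced from \cite{LSU} in the flat-torus write-up of \cite{cirant2020}.
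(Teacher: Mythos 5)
The paper does not actually prove \cref{linear estim Sobolev}: it is invoked as a classical result, citing \cite[Theorem 9.1, pp.\ 341--342]{LSU} for the maximal $L^r$ estimate and \cite[Appendix]{cirant2020} for the adaptation to the flat torus, with the embedding \eqref{embedding} taken from the Corollary on pp.\ 342--343 of \cite{LSU}. Your sketch is a legitimate self-contained reconstruction, but it travels a different road: \cite{LSU} obtains the $W^{1,2}_r$ bound from the explicit heat-kernel (potential) representation plus Calder\'on--Zygmund estimates on the resulting singular integrals, whereas you go through time reversal, maximal $L^r$-regularity of $\partial_t-\sigma\Delta$ via Fourier multipliers (or BIP/Dore--Venni), the trace characterization of $W^{2-2/r}_r(\T^d)$, and a short-time contraction to reinstate $b\cdot\nabla u+cu$. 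Your route is cleaner on the torus (no boundary potentials) and makes transparent why $C$ depends only on $\|b\|_{L^\infty}$, $\|c\|_{L^\infty}$ and stays bounded for bounded $T$ (finitely many slabs plus discrete Gr\"onwall); the price is that the core difficulty is simply relocated into the multiplier theorem rather than removed. Two small points to tighten: the symbol $\sigma|k|^2/(i\tau+\sigma|k|^2)$ is scalar, so you do not need the operator-valued ($R$-bounded) Mikhlin theorem --- the classical Marcinkiewicz--Lizorkin condition with parabolic scaling, transferred to $\R\times\T^d$, suffices, provided you treat the zero spatial mode separately (it obeys an ODE in $t$); and the slab estimate should read $\|u\|_{L^r((T-\delta,T)\times\T^d)}\leq \delta\,\|\partial_t u\|_{L^r}+\delta^{1/r}\|u_T\|_{L^r(\T^d)}$ --- the precise power of $\delta$ is immaterial for the contraction, but the one you wrote is not dimensionally consistent. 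Neither issue affects the validity of the argument.
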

	Next we introduce some results for the FPK equation. 
\begin{proposition}\label{m stability} Let $r>d+2$, $\|q^{(\iota)}\|_{L^\infty(Q;\R^d)}\leq R$, consider the parabolic equation in divergence form:
\begin{equation}\label{m1}
		\left\{\begin{split}
\qquad &	\partial_t m^{(\iota)}- \sigma \Delta m^{(\iota)} - {\rm{div}} ( m^{(\iota)}q^{(\iota)}) =0&{\rm in}\,\,Q,\\
		&	m^{(\iota)}(0,x)=m_0(x)&{\rm in}\,\,\T^d.
		\end{split}\right.
	\end{equation}
 Denote by $\delta m=m^{(\iota_1)}-m^{(\iota_2)}$ and $\delta q=q^{(\iota_1)}-q^{(\iota_2)}$. Then \par
(i) Let $m_0\in W^1_r(\T^d)$, then there exists a unique solution $m^{(\iota)}$ in $\mathcal H^1_r(Q)$ to \eqref{m1}. Moreover
there exists a constant $C$ depending only on $r,T,d, R$ and $\|m_0\|_{W^1_r(\T^d)}$, such that
$$\| m^{(\iota)}\|_{\mathcal H^1_r(Q)}\leq C,\,\,\|\delta m\|_{\mathcal H^1_r(Q)}\leq C\|\delta q\|_{L^\infty(Q;\R^d)}.$$
\par
(ii) Let $m_0\in W^{2-\frac{2}{r}}_r(\T^d)$ and $\|{\rm{div} }q^{(\iota)}\|_{L^\infty(Q)}\leq R_1$. Then there exists a unique solution $m^{(\iota)}$ in $W^{1,2}_r(Q)$ to \eqref{m1}, moreover for a constant $C$ depending only on $r,T,d, R, R_1$ and $\|m_0\|_{W^{2-\frac{2}{r}}_r(\T^d)}$,
	$$\|\delta m\|_{W^{1,2}_r(Q)}\leq C\big(\|\delta q\|_{L^\infty(Q;\R^d)}+\|{\rm{div} }\delta q^{(\iota)}\|_{L^\infty(Q)}\big).$$	\par
(iii) Let $m_0$ satisfies ($M$), $q^{(\iota)}\in \mathcal C^{\alpha/2,\alpha}(Q;\R^d)$ and $\|{\rm{div} }q^{(\iota)}\|_{L^\infty(Q)}\leq R_1$. Then $m(t,x)>0$ for all $(t,x)\in Q$.	
	 \end{proposition}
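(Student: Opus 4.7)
The plan is to treat the three statements sequentially, exploiting the divergence form of the FPK equation in (i) and the non-divergence rewriting in (ii)--(iii). For (i), given that $q^{(\iota)}\in L^\infty(Q;\R^d)$, the equation \eqref{m1} is a linear parabolic equation in divergence form with bounded coefficients. I would invoke the classical existence theory for such equations in the space $\mathcal H^1_r(Q)$ (see e.g.\ Ladyzhenskaya--Solonnikov--Ural'tseva, or the duality framework in Porretta's treatment of FPK equations): the operator $u\mapsto \partial_t u - \sigma\Delta u - \operatorname{div}(uq^{(\iota)})$ with initial data in $W^1_r(\T^d)$ produces a unique weak solution with the uniform bound $\|m^{(\iota)}\|_{\mathcal H^1_r(Q)}\le C$, the constant depending only on $R$, $T$, $d$, $r$, $\|m_0\|_{W^1_r}$. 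Applying this to the difference, $\delta m$ solves
\begin{equation*}
\partial_t\delta m-\sigma\Delta\delta m-\operatorname{div}(\delta m\, q^{(\iota_1)})=\operatorname{div}(m^{(\iota_2)}\delta q),\qquad \delta m(0,\cdot)=0,
\end{equation*}
so the same theory applied to a zero-initial datum with source $\operatorname{div}(m^{(\iota_2)}\delta q)\in (W^{0,1}_{r'}(Q))'$ gives $\|\delta m\|_{\mathcal H^1_r(Q)}\le C\|m^{(\iota_2)}\delta q\|_{L^r(Q;\R^d)}\le C\|\delta q\|_{L^\infty(Q;\R^d)}$ using the bound from the first step.

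For (ii), when $\operatorname{div} q^{(\iota)}\in L^\infty$, I would rewrite \eqref{m1} in non-divergence form as $\partial_t m-\sigma\Delta m - q^{(\iota)}\cdot\nabla m - (\operatorname{div} q^{(\iota)})m=0$, i.e.\ a linear parabolic problem with bounded coefficients, and apply the classical $L^r$ maximal regularity result (\cref{linear estim Sobolev}, reading it backward in time) to get $\|m^{(\iota)}\|_{W^{1,2}_r(Q)}\le C$. For the difference, $\delta m$ satisfies
\begin{equation*}
\partial_t\delta m-\sigma\Delta\delta m-q^{(\iota_1)}\cdot\nabla\delta m-(\operatorname{div} q^{(\iota_1)})\delta m=\delta q\cdot\nabla m^{(\iota_2)}+(\operatorname{div}\delta q)m^{(\iota_2)},
\end{equation*}
with $\delta m(0,\cdot)=0$. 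Since $\nabla m^{(\iota_2)}\in L^r(Q;\R^d)$ and $m^{(\iota_2)}\in L^\infty(Q)$ by the embedding in \eqref{embedding}, the right-hand side is in $L^r(Q)$ with norm controlled by $C(\|\delta q\|_{L^\infty}+\|\operatorname{div}\delta q\|_{L^\infty})$, and \cref{linear estim Sobolev} delivers the claimed bound.

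For (iii), the regularity assumptions $q^{(\iota)}\in \mathcal C^{\alpha/2,\alpha}$ and $\operatorname{div} q^{(\iota)}\in L^\infty$ place the equation in the scope of the strong parabolic maximum principle (or equivalently the Harnack inequality for non-divergence form equations with bounded coefficients). I would apply it to the non-divergence rewriting above with the initial datum $m_0\ge\vartheta>0$; the maximum principle forces $m(t,x)>0$ pointwise in $Q$, and quantitatively, the standard comparison with the subsolution $\underline m(t,x)=\vartheta\, e^{-\|\operatorname{div} q^{(\iota)}\|_\infty t}$ (which satisfies the equation with zero diffusion and zero transport but decays under the reaction $-(\operatorname{div} q)\cdot$) yields a uniform lower bound $m\ge 1/C$. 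The main delicacy is the last step: the naive comparison does not directly work with the transport term $q\cdot\nabla m$, so the cleanest route is to apply a Harnack inequality of Aronson--Serrin (or the Krylov--Safonov version) to propagate the positivity of $m_0$ to all of $Q$ with a constant depending only on $R$, $R_1$, $\vartheta$, $T$, $d$, and the ellipticity constant $\sigma$. I would expect this positivity quantification to be the most technical step; parts (i) and (ii) are essentially applications of already-cited parabolic theory.
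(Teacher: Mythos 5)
Your treatment of (i) and (ii) follows the paper's proof in all essentials: for (i) the paper also reduces to the difference equation $\partial_t\delta m-\sigma\Delta\delta m-\operatorname{div}(q^{(\iota_1)}\delta m)=\operatorname{div}(m^{(\iota_2)}\delta q)$ with zero initial datum (it assembles the $\mathcal H^1_r$ bound from an $L^\infty(0,T;L^r)$ stability estimate of Camilli--Tang plus a duality argument for $\nabla\delta m$ and $\partial_t\delta m$ \`a la Cirant--Tonon, rather than quoting a single source-term estimate, but the decomposition is identical), and for (ii) it likewise expands the divergence and applies \cref{linear estim Sobolev}. The only genuine divergence is in (iii), and there your hesitation is misplaced: the transport term causes no difficulty for the naive comparison, since in the comparison argument $q^{(\iota)}\cdot\nabla$ acts on the difference $m-\underline m$ (where the weak maximum principle applies after the standard exponential-in-time rescaling to fix the sign of the zeroth-order coefficient $-\operatorname{div}q^{(\iota)}$), while the spatially constant barrier $\vartheta e^{-R_1 t}$ is annihilated by it. The paper implements exactly this idea in a slightly different packaging: since $m\ge 0$ and $\operatorname{div}q^{(\iota)}\ge -R_1$, the solution $m$ is a supersolution of $\partial_t\varrho-\sigma\Delta\varrho-q^{(\iota)}\cdot\nabla\varrho+R_1\varrho=0$ with the same initial datum, so $m\ge\varrho$ by the $W^{1,2}_r$ comparison principle, and $\varrho>0$ by the strong parabolic maximum principle. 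Invoking an Aronson--Serrin or Krylov--Safonov Harnack inequality would also work and yields a more quantitative lower bound, but it is heavier machinery than the statement requires.
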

\begin{proof}
$\| m^{(\iota)}\|_{\mathcal H^1_r(Q)}\leq C$ can be obtained from \cite[Proposition 2.6]{Cirant2019}. It is clear  
\begin{equation}\label{deltam}
\partial_t\delta m-\sigma \Delta \delta m-\text{div}(q^{(\iota_1)}\cdot \delta m)=\text{div}(\delta q\cdot m^{(\iota_2)}),
\end{equation}
by \cite[Theorem 3.1]{ct}, we have $\|\delta m\|_{L^\infty(0,T;L^r(\T^d))}\leq C\|\delta q\|_{L^\infty(Q;\R^d)}$, where $C$ depends only on  $r,T,d, R$ and $\|m^{(\iota_2)}\|_{L^r(Q)}$. We can use the same duality arguments as in \cite[Proposition 2.4, pp. 7-8]{Cirant2019} to obtain similar estimates on $\|\nabla \delta m\|_{L^r(Q;\R^d)}$ and $\|\partial_t\delta m\|_{(W^{0,1}_{r'}(Q))'}$, with constants depending on $r,T,d, R$ and $\|m_0\|_{W^1_r(\T^d)}$.   Then the bound on $\|\delta m\|_{\mathcal H^1_r(Q)}$ follows.
 (ii) can be obtained analogously by considering \eqref{deltam} and using \cref{linear estim Sobolev}. By standard results, $m(t,x)\geq 0$ for all $(t,x)\in Q$. Hence, let $\varrho(0,x)=m_0(x)$, $\varrho$ be the unique classical solution to 
$$
\partial_t \varrho- \sigma \Delta \varrho - q^{(\iota)}\nabla \varrho+ R_1\varrho=0\,\,\,{\rm in}\,\,Q,
$$
from comparison principle for strong solutions in $W^{1,2}_r(Q)$ (Theorem 7.1 of \cite{LSU}, p. 181), we have $m(t,x)\geq \varrho(t,x)$ for all $(t,x)\in Q$. From strong parabolic maximum principle (e.g. \cite[Theorem 12, p. 399]{evans2022partial}),  $\varrho(t,x)>0$ for all $(t,x)\in Q$.
\end{proof}
\begin{remark}\label{H embedding}
From \cite[Theorem A.3 (iii)]{Meta} and \cite[Proposition 2.1 (iii)]{Cirant2019}, for $r>d+2$ the space $\mathcal H^1_r(Q)$ is continuously embedded in $\mathcal C^{\alpha/2,\alpha}(Q)$, for some $\alpha\in (0,1)$. Hence convergence in $\mathcal H^1_r(Q)$ implies uniform convergence. 
\end{remark}

\textbf{Acknowledgement} \textit{We want to thank Fabio Camilli for reading the manuscripts and giving many helpful comments}.


\begin{thebibliography}{9}


\bibitem{ad}
	Y. Achdou and  I. Capuzzo-Dolcetta,
	\newblock {\it Mean field games: numerical methods},
	\newblock SIAM J. Numer. Anal. 48, no. 3 (2010), pp. 1136--1162.


\bibitem{acd}
	Y. Achdou, F. Camilli and I. Capuzzo-Dolcetta, 
	\newblock {\it Mean field games: convergence of a finite difference method},
	\newblock  SIAM J. Numer. Anal. 51, no. 5 (2013), pp. 2585-2612.
\bibitem{achdouCetraro}
	Y. Achdou, P. Cardaliaguet, F. Delarue, A. Porretta and F. Santambrogio,
  	\newblock {\it Mean Field Games: Cetraro, Italy 2019},
  	\newblock Springer Nature, volume 2281, 2020.

\bibitem{ak}
Y. Achdou and Z. Kobeissi, {\it Mean field games of controls: Finite difference approximations}, Math. eng. 3 (2021), pp. 1-35.
\bibitem{achdou2015}
Y.~Achdou and M.~Lauri{\`e}re,
\newblock {\it On the system of partial differential equations arising in mean field type control},
\newblock Discret. Contin. Dyn. Syst., 35(9) (2015), pp. 38-79.
	
	
\bibitem{bardi2019}
M.~Bardi and M.~Fischer.
\newblock {\it On non-uniqueness and uniqueness of solutions in finite-horizon mean
  field games}
\newblock ESAIM Control Optim. Calc. Var., 25 (2019): 44.


\bibitem{bl}
A.~Bensoussan and J.-L. Lions,
\newblock {\it Applications of variational inequalities in stochastic control},
\newblock Elsevier, 2011.
	
\bibitem{ben}
A. Bensoussan, J. Frehse and P. Yam,
{\it Mean field games and mean field type control theory}, Springer Briefs in Mathematics, New York, 2013. 

\bibitem{blp}
P.~Lavigne and L.~Pfeiffer,
\newblock {\it Generalized conditional gradient and learning in potential mean field
  games},
\newblock preprint arXiv:2209.12772, 2022.

\bibitem{bc}
A.~Briani and P.~Cardaliaguet,
\newblock {\it Stable solutions in potential mean field game systems},
\newblock {Nonlinear Differ. Equ. Appl.},
  25 (2018), pp. 1-26.
  
\bibitem{ccg}
S. Cacace, F. Camilli and A. Goffi, {\it A policy iteration method for Mean Field Games},  ESAIM Control Optim. Calc. Var., 27 (2021) 85.

\bibitem{ct}
F.~Camilli and Q.~Tang,
\newblock {\it Rates of convergence for the policy iteration method for mean field
  games systems},
\newblock {J. Math. Anal. Appl. }
  512 (2022), pp. 126-138.

\bibitem{semiconcave}
P.~Cannarsa and C.~Sinestrari.
\newblock {\it Semiconcave functions, Hamilton-Jacobi equations, and optimal
  control}, volume~58.
\newblock Springer Science \& Business Media, 2004.



\bibitem{ch}
P.~Cardaliaguet and S. Hadikhanloo,
 {\it Learning in mean field games: the fictitious play}, ESAIM Control Optim. Calc. Var., 23 (2017), pp. 569-591.

\bibitem{carmona2018}
R.~Carmona and F.~Delarue,
{\it Probabilistic theory of mean field games with applications I}, Springer, 2018.

\bibitem{cirant2020}
M.~Cirant, R.~Gianni and P.~Mannucci,
\newblock {\it Short-time existence for a general backward--forward parabolic system
  arising from mean-field games},
\newblock Dyn Games Appl., 10 (2020), pp. 100-119.

\bibitem{cirant2021maximal}
M.~Cirant, A.~Goffi,
\newblock {\it Maximal $L^q$-regularity for parabolic Hamilton--Jacobi equations and
  applications to mean field games},
\newblock {Annals of PDE}, 7 (2021) 19.

\bibitem{cirant2020lipschitz}
M.~Cirant and A.~Goffi.
\newblock {\it Lipschitz regularity for viscous hamilton-jacobi equations with $L^p$
  terms},
\newblock {Ann. Inst. Henri Poincare (C) Anal. Non Lineaire}, 37 (2020), pp. 757-784.


\bibitem{Cirant2021}
M.~Cirant and A.~Porretta,
\newblock {\it Long time behavior and turnpike solutions in mildly non-monotone mean
  field games},
\newblock ESAIM Control Optim. Calc. Var.,
  27 (2021) 86.


\bibitem{Cirant2019}
M.~Cirant and D.~Tonon,
\newblock {\it Time-dependent focusing mean-field games: the sub-critical case},
\newblock J Dyn Differ Equ., 31 (2019), pp. 49-79.


\bibitem{delarue2021}
F.~Delarue and A.~Vasileiadis,
\newblock {\it Exploration noise for learning linear-quadratic mean field games},
\newblock {preprint arXiv:2107.00839}, 2021.

\bibitem{deschamps1975}
R.~Deschamps,
\newblock {\it An algorithm of game theory applied to the duopoly problem},
\newblock {Eur. Econ. Rev.}, 6 (1975), pp. 187-194.


\bibitem{dumitrescu2022linear}
R.~Dumitrescu, M.~Leutscher and P.~Tankov,
\newblock {\it Linear programming fictitious play algorithm for mean field games
  with optimal stopping and absorption},
\newblock {preprint arXiv:2202.11428}, 2022.


\bibitem{elie2019approximate}
R.~Elie, J.~P{\'e}rolat, M.~Lauri{\`e}re, M.~Geist and O.~Pietquin,
\newblock {\it On the convergence of Model Free Learning in Mean Field Games},
\newblock Proc. of the 34th AAAI Conference on Artificial Intelligence, 2020.

\bibitem{evans2022partial}
L.~C. Evans,
\newblock {\it Partial differential equations}, volume~19.
\newblock American Mathematical Society, 2022.


\bibitem{FL98} D.~Fudenberg and D. K.~Levine, {\it The theory of learning in games},
MIT Press, Cambridge, MA, 1998.

\bibitem{guo2023}
X.~Guo, A.~Hu, R.~Xu and J.~Zhang,
\newblock {\it A general framework for learning mean-field games},
\newblock {preprint arXiv: 2003.06069v3}, 2023.


\bibitem{Hadik}
S.~Hadikhanloo, {\it Learning in Mean Field Games}, PhD dissertation, University Paris-Dauphine, 2018.

\bibitem{hs}
S.~Hadikhanloo and F.~J. Silva,
\newblock {\it Finite mean field games: fictitious play and convergence to a first
  order continuous mean field game}
\newblock {J. de Math. Pures et Appliqu{\'e}es},
  132 (2019), pp. 369-397.

\bibitem{howard1960}
R.~A. Howard,
\newblock {\it Dynamic programming and markov processes}, The MIT Press, Cambridge, MA, 1960.
	 
	\bibitem{hcm}
	M. Huang, P.~E. Caines and R.~P. Malhame, {\it Large-population cost-coupled LQG problems with non uniform agents: Individual-mass behaviour and decentralized $\epsilon$-Nash equilibria}, IEEE Trans. Autom. Control, 52 (2007), pp. 1560-1571.
		
	\bibitem{LSU}
	 O.~A. Ladyzenskaja, V.~A.; Solonnikov, N.~N. Ural'ceva,
     {\it Linear and quasilinear equations of parabolic type},
	 Translations of Mathematical
	 Monographs, Vol. 23. American Mathematical Society, Providence, 1968.
	 \bibitem{ll}
	J.~M. Lasry and  P.~L. Lions, {\it Mean field games}, Jpn. J. Math., 2 (2007), pp. 229-260.
	
	\bibitem{lauriere2021}
M.~Lauriere,
\newblock {\it Numerical methods for mean field games and mean field type control},
\newblock preprint arXiv: 2106.06231, 2021.


\bibitem{lst}
M.~Lauri{\`e}re, J.~Song and Q.~Tang,
\newblock {\it Policy iteration method for time-dependent mean field games systems
  with non-separable Hamiltonians},
\newblock {Applied Math. \& Optim}, 87 (2023), pp. 1-34.

\bibitem{l2022}
M.~Lauri{\`e}re, S.~Perrin, S.~Girgin, P.~Muller, A.~Jain, T.~Cabannes,
  G.~Piliouras, J.~P{\'e}rolat, R.~{\'E}lie, O.~Pietquin, et~al.,
\newblock {\it Scalable deep reinforcement learning algorithms for mean field games},
\newblock {preprint arXiv:2203.11973}, 2022.

\bibitem{learning survey}
M.~Lauri{\`e}re, S.~Perrin, M.~Geist and O.~Pietquin,
\newblock {\it Learning mean field games: A survey},
\newblock {preprint arXiv:2205.12944}, 2022.


 \bibitem{Meta}
	 G. Metafune, D. Pallara and A. Rhandi, {\it Global properties of transition probabilities of singular diffusions},
	 Teor. Veroyatn. Primen., 54 (2009), pp. 116-148.
\bibitem{MondrerShapley2}
D. Monderer and L. S.~Shapley, {\it Fictitious play property for games with identical interests}, J. Econ. Theory, 68 (1996), pp. 258-265.

	
\bibitem{perolatOMD}
	J. P{\'e}rolat, S. Perrin, R. Elie, M. Lauri{\`e}re, G. Piliouras, M. Geist, K. Tuyls and O. Pietquin,
	\newblock {\it Scaling up Mean Field Games with Online Mirror Descent},
	\newblock Proc. of the 21st Intertional Conference on Autonomous Agents and Multiagent systems, 2022.




\end{thebibliography}
\end{document}